\newtheorem{theoremalph}{Theorem}
\newtheorem{theorem}{Theorem}[section]
\newtheorem{proposition}[theorem]{Proposition}
\newtheorem{lemma}[theorem]{Lemma}
\newtheorem{corollary}[theorem]{Corollary}
\newtheorem{definition}[theorem]{Definition}
\newtheorem{obs}{Observation}
\def\ie{{\em i.e.,\ }}
\def\AE{{\em a.e.\ }}
\def\eg{{\em e.g.\ }}
\def\eps{\varepsilon}
\def\N{{\mathbb N}}
\def\R{{\mathbb R}}
\def\C{{\mathbb C}}
\newcommand {\CA}{{\mathcal A}}
\newcommand {\CB}{{\mathcal B}}
\newcommand {\CC}{{\mathcal C}}
\newcommand {\CF}{{\mathcal F}}
\newcommand {\CH}{{\mathcal H}}
\newcommand {\CI}{{\mathcal I}}
\newcommand {\CK}{{\mathcal K}}
\newcommand {\CL}{{\mathcal L}}
\newcommand {\CM}{{\mathcal M}}
\newcommand {\CO}{{\mathcal O}}
\newcommand {\CP}{{\mathcal P}}
\newcommand {\CT}{{\mathcal T}}
\newcommand {\CU}{{\mathcal U}}
\newcommand {\CV}{{\mathcal V}}
\newcommand {\CW}{{\mathcal W}}
\newcommand {\CX}{{\mathcal X}}
\def\s{\sigma}
\def\l{\lambda}
\def\1{ {\hbox{{\it 1}} \!\! I} }
\def\al{\alpha}
\def\be{\beta}
\def\de{\delta}
\def\ga{\gamma}
\def\8{\infty}
\newcommand{\ol}{\overline}
\def\disp{\displaystyle}
\renewcommand{\S}{\Sigma}
\newcommand{\wt}{\widetilde}
\newcommand{\wh}{\widehat}
\newcommand{\BBone}{{1\!\!1}}
\newcommand{\ninf}{{n\to+\8}}
\newcommand{\mv}{{\mathbf{v}}}
\newcommand{\mw}{{\mathbf{w}}}
\newcommand{\grosinte}[1]{\widering{#1}}
\newcommand{\inte}[1]{\mathring{#1}}
\renewcommand{\L}{\Lambda}
\def\io{\iota}
\def\I{[p_{-},p_{+}]}
\def\Seps{\S_{[-\eps,\eps]}}
\def\mus{\mu_{SRB}}
\def\ko{\kappa_{0}}
\def\k1{\kappa_{1}}
\def\TCT{\Theta_{\CT}}
\def\FS{F_{\S}}
\def\FM{F_{\CM}}
\def\vmp{{V_{\CM,\CP}}}
\def\vmo{{V_{\CM,0}}}
\def\tom{\tau_{\CM}}
\def\gm{g_{\CM}}
\def\rm{r_{\CM}}
\def\DR{{\Delta r}}
\def\Fi{\varphi}
\theoremstyle{definition}
\newtheorem{remark}{Remark}
\newtheorem*{nota}{Notation}
\newtheorem*{termino}{Terminology}
\begin{document}
\synctex =1
\title[Equilibrium states for SaPH-flows]
{Uniqueness of the measure of maximal entropy for singular hyperbolic flows in dimension 3 and more results on equilibrium states}
\author{Renaud Leplaideur }
\date{Version of \today}
\thanks{}

\subjclass[2010]{37A35, 37A60, 37D20, 37D35} 
\keywords{partially hyperbolic singular flows, thermodynamic formalism, equilibrium states, measure of maximal entropy}
\thanks{}

\maketitle

\begin{abstract}
We prove that any 3-dimensional singular hyperbolic attractor admits  for any H\"older continuous potential $V$ at most one equilibrium state for $V$ among regular measures. We give a condition on $V$ which ensures that no singularity can be an equilibrium state. Thus, for these $V$'s, there exists a unique equilibrium state and it is a regular measure. Applying this for $V\equiv 0$, we show that any 3-dimensional singular hyperbolic attractor admits a unique measure of maximal entropy.
\end{abstract}

\section{Introduction}\label{sec:intro}

\subsection{Background}
This paper deals with Thermodynamic formalism for partially hyperbolic attractors with singularities in dimension 3. 
The Thermodynamic formalism has been introduced in Ergodic Theory  in the 70's by Ruelle, Sinai and Bowen (see \cite{BR75,Bowenlnm, Ruelle76, Sinai72}). Firstly studied for uniformly hyperbolic dynamical systems,  it has been a challenge  for many years, and still is, to extend it to non-uniformly hyperbolic dynamical systems. 

In this paper, we study existence and uniqueness of equilibrium states  for 3-dimensional partially hyperbolic attractors with singularities. The  main famous example in this class is the family of the Lorenz-like attractors. First introduced by Lorenz in \cite{Lorenz}, this class has several typical properties of chaotic dynamics: it is robust in the $C^{1}$-topology, every ergodic invariant measure is hyperbolic  but  the attractors themselves are not  hyperbolic. 
For this class we prove  in Theorem \ref{th-main1} uniqueness of the relative equilibrium state for any H\"older continuous potential among non-singular measures. Theorem \ref{th-main2} gives a large class of H\"older continuous potentials for which there is a unique equilibrium state and it is a regular measure.
Theorem \ref{th-main3} states uniqueness of the measure of maximal entropy. 

Even if a large variety of potentials is possible, it is sometimes considered that two of them are the most important. The nul-function, because it furnishes the measure with maximal entropy and the logarithm of the unstable Jacobian which gives the SRB-measure (sometimes called $u$-Gibbs state). Beyond the interest of these potential, it is also noteworthy that they are also the easiest to study/construct. For the first one, because there is no problem to control distorsions for the nul-function. For the second one,  because the geometrical properties of the $u$-Gibbs states immediately gives for free the ``conformal-measure''. 

 Most of the results for the singular hyperbolic flows deal with their dynamical properties. They can be classified in several kinds of result. The ones which deal with $C^{1}$-generic properties, robustness  and homoclinic classes (see \eg \cite{ABCD07,ABC11, BCW09, BC04}). Other results study mixing properties (see \eg \cite{AraujoMelbourne19, AraujoMelbourne17, AraujoMelbourne16, AraujoMelbourneVarandas15}). At last, results for general properties, including existence and uniqueness of the SRB measure or generalizations as the Rovella Attractor, (see \eg \cite{APPV09, MetzgerMorales08, MetzgerMorales06, MPP04,MPP99}). 

About Ergodic results, \ie existence and uniqueness of special invariant measures, all the known results deal with the SRB measure. We remind that  for a map $f:X\to X$ and a potential $V:X\to \R$, an equilibrium state is an invariant (probability) measure which maximizes the free energy of the potential $V$:
$$h_{\mu}+\int V\,d\mu=\max\left\{h_{\nu}+\int V\,d\nu\right\}.$$
For flows, we consider the time-1 map. 

The SRB measure is usually obtained as a $u$-Gibbs state, that is for a special case\footnote{ More precisely, $V=-log J^{cu}$ or $V=-\log J^{u}$ depending of the assumption on the non-uniformly hyperbolic system. } of $V$.
We mention the general result in \cite{Qiu}. For singular hyperbolic attractors in dimension 3 existence and uniqueness of the SRB mesure is done in \cite{APPV09}. In higher dimension it is done in \cite{Lep-Yang}. 
As far as we know, very few results exist for the measure of maximal entropy. For diffeomorphisms, we mention the generic result \cite{BCS}.
Hence,  and still as far as we know, our result here is thus the first result dealing with equilibrium state  for general potentials for these systems and also for the special case of maximal entropy. 

For any continuous potential, the existence of equilibrium states comes from the upper semi-continuity of entropy. We refer to \cite{Bowenlnm} for classical results on equilibrium states. Upper semi continuity of the entropy follows from expansiveness and expansiveness for singular hyperbolic attractors in dimension 3 is proved in \cite{APPV09}. Therefore, the existence of equilibrium state for any continuous potential was already known. Hence, the novelty in this paper is uniqueness of these measures, the fact that they have full support and that they are local equilibrium states with some local Gibbs property.

\subsection{Settings and statement of  results}

Let $X$ be a vector field on a $d$-dimensional manifold $M$, and $f_t$ be the flow generated by $X$. We recall that a compact invariant set $\Lambda$ is called a topological attractor if
\begin{itemize}

\item there is an open neighborhood $U$ of $\Lambda$ such that $\cap_{t\ge 0}f_t({\overline U})=\Lambda$,

\item $\Lambda$ is transitive, i.e., there is a point $x\in\Lambda$ with a dense forward-orbit. 

\end{itemize}

A compact invariant set $\Lambda$ is called a \emph{singular hyperbolic attractor} (see  \cite{MetzgerMorales08,ZGW08}) if it is a topological attractor, with at least one \emph{singularity} $\sigma$, which means that $\sigma$ satisfies  $X(\sigma)=0$.
Moreover, there is a continuous invariant splitting $T_\Lambda M=E^{ss}\oplus E^{cu}$ of $Df_t$ together with constants $C>0$ and $\lambda>0$ such that
\begin{itemize}


\item Domination: for any $x\in\Lambda$ and any $t>0$, $\|{D}f_t|_{E^{ss}(x)}\|\|{Df_{-t}}|_{E^{cu}(f_t(x))}\|\le C{e}^{-\lambda t}$.

\item Contraction: for any $x\in\Lambda$ and any $t>0$, $\|{D}f_t|_{E^{ss}(x)}\|\le C{e}^{-\lambda t}$.

\item Sectional expansion: for any $x$, {$E^{cu}(x)$ contains two non-collinear vectors}, and any $t>0$, for every pair of non-coltinear vectors $\mv$ and $\mw$ in $E^{cu}(x)$, 
$|\det {D}{f_{t}}|_{{span}<\mv,\mw>}|\ge C{e}^{\lambda{t}}.$

\end{itemize}

We  emphasize that one of the difficulties to study these attractors is that  the singularity may belong  to the attractor  and may be accumulated by recurrent regular orbits. {Since the uniformly hyperbolic case has already been well-understood since \cite{BR75}, we assume that the attractor does contain at least one singularity.}

The model that we have in mind clearly is the Lorenz attractor, and our construction needs some regularity for the strong stable foliation. In a very recent work, \cite{AraujoMelbourne19}, it is proved that the strong stable foliation is H\"older continuous. It has also been  proved in \cite{AraujoMelbourne16} that the foliation is even Lipschitz-continuous  for the Lorenz attractor and close attractors.

\bigskip
We recall that entropy for a flow is defined as being the entropy of the time-1 map $f_{1}$.

\begin{definition}
\label{def-equilstate}
For $V:\L\to\R$ H\"older continuous called a potential, an equilibrium state is a $f_{1}$-invariant probability measure $\mu$ which maximizes the free energy for $V$, that is 
$$h_{\mu}+\int V\,d\mu=\max\left\{h_{\nu}+\int V\,d\nu\right\}.$$
This maximum is called the pressure for $V$ and is denoted by $\CP(V)$. 
\end{definition} 
 We remind that entropy is affine: $\disp h_{t\mu_{1}+(1-t)\mu_{2}}=th_{\mu_{1}}+(1-t)h_{\mu_{2}}$. This yields that any equilibrium state is a convex combination of ergodic equilibrium states. Consequently, all equilibrium states are well known as soon as ergodic equilibrium states are all known.

\paragraph{\bf Some vocabulary}

We say that $x\in\L$ is {\it regular} if it is not a singularity. 
We say that an ergodic measure is {\it regular} if no singularity has positive $\mu$-measure. 
A point $x$ is said to be {\it regular with respect to some invariant ergodic measure}, say $\mu$, if any property true $\mu$-almost everywhere holds for $x$. We shall also say $x$ in $\mu$-regular.

Now we have the dichotomy: if $\mu$ is an ergodic equilibrium state for $V$ then 
\begin{description}
\item[C1] either $\mu=\delta_{\s}$ for some singularity $\s$,
\item[C2] or $\mu$ is regular. 
\end{description}

With all these settings we prove in this paper:

\begin{theoremalph}\label{th-main1}
Assume that $\Lambda$ is a singular hyperbolic attractor of a $C^2$ 3-dimensional vector field $X$. For every H\"older $V:\Lambda\to\R$  there exists at most one unique regular equilibrium state for $V$. If it does, then it has full support.

More precisely, only two situations may happen for $\be\to \CP(\be.V)$ and $\be>0$. 
\begin{enumerate}
\item Either it is strictly convex and analytic for any $\be>0$, and then there exists a unique equilibrium state (for every $\be>0$) and it is a regular measure with full support. 
\item Or there exists $\be_{c}>0$ such that the previous case holds for every $0\le \be<\be_{c}$, and for every $\be>\be_{c}$ all the equilibrium states are supported on singularities. 
\end{enumerate}

\end{theoremalph}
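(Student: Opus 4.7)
The natural strategy is to reduce the analysis to a discrete-time system via a Poincar\'e cross-section and then to apply thermodynamic formalism there. First I would construct a finite family of cross-sections $\Sigma$ transverse to $X$, disjoint from the singularities, such that the first-return map $F_\Sigma\colon\Sigma\to\Sigma$ captures the dynamics of the flow on $\Lambda$ away from the singularities. The return time $\tau_\Sigma$ is finite but blows up on the traces in $\Sigma$ of the stable manifolds of the singular points. On $\Sigma$ the splitting $E^{ss}\oplus E^{cu}$ makes $F_\Sigma$ uniformly hyperbolic, and using H\"older continuity of the strong stable foliation proved in \cite{AraujoMelbourne19} one can quotient by $E^{ss}$ to obtain a piecewise expanding, discontinuous one-dimensional factor $\bar F$ of Lorenz type.

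By Abramov's formula, a regular $f_1$-invariant probability $\mu$ corresponds to an $F_\Sigma$-invariant probability $\mu_\Sigma$ with $\int\tau_\Sigma\,d\mu_\Sigma<\infty$, and the free energy relation reads
\[
h_\mu+\int V\,d\mu \;=\; \CP(V) + \frac{h_{\mu_\Sigma}+\int(V_\Sigma-\CP(V)\tau_\Sigma)\,d\mu_\Sigma}{\int\tau_\Sigma\,d\mu_\Sigma},
\]
where $V_\Sigma(x)=\int_0^{\tau_\Sigma(x)}V(f_s x)\,ds$. Hence regular equilibrium states of the flow correspond exactly to $F_\Sigma$-equilibrium states of zero pressure for the induced potential $\Phi:=V_\Sigma-\CP(V)\tau_\Sigma$. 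The second main step is then uniqueness of the $F_\Sigma$-equilibrium state for $\Phi$, which I would pass to the one-dimensional factor $\bar F$ whenever the stable holonomies allow it.

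For $\bar F$ I would code the dynamics by a countable Markov shift built from the branches of $\bar F$ and its iterates and transfer $\Phi$ to a locally H\"older potential after a bounded-distortion argument. The key analytic input is then Sarig's theory of countable Markov shifts: under the Big Image Property and summable variations one obtains a unique Ruelle--Perron--Frobenius conformal measure, a positive eigenfunction, and a spectral gap of the transfer operator on an appropriate Banach space. This gives the unique regular equilibrium state. Full support follows from transitivity of $\Lambda$ together with the resulting local Gibbs property; analyticity of $\beta\mapsto\CP(\beta V)$ on the regular branch follows from analytic perturbation of the transfer operator, and strict convexity from the fact that a H\"older $V$ is not cohomologous to a constant over the hyperbolic part of the attractor.

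For the dichotomy, set $\CP_{\rm reg}(\beta)=\sup\{h_\nu+\beta\int V\,d\nu:\nu\text{ regular}\}$ and $\CP_{\rm sing}(\beta)=\beta\max_\s V(\s)$, so that $\CP(\beta V)=\max\{\CP_{\rm reg}(\beta),\CP_{\rm sing}(\beta)\}$. Analyticity and strict convexity of $\CP_{\rm reg}$ on its domain together with linearity of $\CP_{\rm sing}$ force the two graphs to meet at most at a single point $\beta_c$, beyond which only Dirac measures at singularities can realize the pressure. The main obstacle is the third step: the non-compactness caused by the unbounded return time near the singularities must be controlled carefully, so that the induced potential genuinely satisfies Sarig's hypotheses (Big Image Property, summable variations, positive recurrence) and so that the resulting shift-equilibrium measure has integrable $\tau_\Sigma$, which is what legitimizes its promotion to a regular measure of the flow.
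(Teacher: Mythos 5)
Your high-level architecture (cross-section, one-dimensional factor, Abramov's formula, transfer operator with parameter $\CP(V)$, convexity dichotomy) parallels the paper's, but two steps that you state as routine are precisely where the real work lies, and as written they are gaps. First, the reduction itself: for a general $3$-dimensional singular hyperbolic attractor you cannot simply take ``a finite family of cross-sections'' whose return map is uniformly hyperbolic with a countable Markov coding. The unstable direction inside $E^{cu}$ is not an invariant foliation and unstable leaves through arbitrary points of $\Lambda$ need not exist; the set $\Lambda\cap\Sigma$ is a priori just a closed set with no product structure. The paper's entire Sections~\ref{sec-cross}--\ref{sec-millefeuilles} are devoted to manufacturing a substitute: $s$-adapted sections whose boundary leaves miss $\Lambda$ (via Lemma~\ref{lemma-nointervalinwss}), the GALEO property, and the mille-feuilles --- a compact rectangle of \emph{pseudo}-unstable curves selected so that returns are Markov, full-branch onto a compact interval, and have dynamically H\"older return times. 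Because the induced one-dimensional map is full-branch on a compact interval, the paper then needs only the classical Ionescu-Tulcea--Marinescu theorem rather than Sarig's countable-Markov-shift machinery; your route would additionally require verifying BIP and positive recurrence, which you explicitly defer.

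Second, and more seriously, your sentence ``regular equilibrium states of the flow correspond exactly to $F_\Sigma$-equilibrium states of zero pressure for $\Phi$'' assumes liftability: that an arbitrary regular equilibrium state charges the inducing domain and descends to an invariant measure of the induced system with integrable return time. Abramov's formula gives you the ``only if'' direction once the measure is known to live on the scheme, but nothing in your plan shows that \emph{every} regular equilibrium state does so, nor that two different regular equilibrium states can be compared on a \emph{common} inducing scheme. This is exactly what Propositions~\ref{prop-millefeuillesmeasure} and~\ref{prop-genemillefeuilbis} of the paper establish (any regular measure gives positive measure to some mille-feuilles through any of its regular points, and two such mille-feuilles can be joined through a generalized mille-feuilles over a common cylinder using a connecting periodic orbit); without an analogue, uniqueness among regular measures does not follow from uniqueness of the shift equilibrium state. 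Two smaller points: strict convexity does not follow merely from ``$V$ is not cohomologous to a constant'' (that hypothesis may fail, and the statement must handle it); and a strictly convex function can meet a line twice, so the dichotomy at $\be_c$ needs the asymptote argument ($\CP(\be)\ge h+\be A(V)$ with equality forcing $\CP\equiv\be A(V)$ beyond the touching point), not just ``the two graphs meet at most at a single point.''
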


We remind that a $V$-maximizing measure is a measure which gives maximal value for the integral of $V$ among all invariant measures. 
 
%

\begin{theoremalph}
\label{th-main2}
If $V$ is such that no measure supported on a singularity is a $V$-maximizing measure, then for any $\be\ge 0$, there exists a unique equilibrium state for $\be.V$. Moreover, the pressure function $\CP(\be.V)$ is analytic.  
\end{theoremalph}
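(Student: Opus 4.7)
The plan is to deduce Theorem \ref{th-main2} directly from Theorem \ref{th-main1}: under the hypothesis that no singular measure is $V$-maximizing, I will show that the alternative in Theorem \ref{th-main1} always lands in case (1) for the family $\beta\mapsto\CP(\beta V)$, $\beta>0$. Uniqueness, full support, and analyticity of the pressure on $(0,\infty)$ then come for free. Only the endpoint $\beta=0$ requires a separate comment.

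\textbf{Ruling out the phase transition.} Suppose for contradiction that case (2) of Theorem \ref{th-main1} holds, with some critical value $\beta_c>0$. For every $\beta>\beta_c$, every equilibrium state for $\beta V$ is then a convex combination of Dirac masses on singularities of $\Lambda$. Each $\delta_\sigma$ has zero entropy and satisfies $\int V\,d\delta_\sigma=V(\sigma)$, so the maximum of $h_\mu+\beta\int V\,d\mu$ over such combinations is attained at $\delta_{\sigma^{\star}}$, where $\sigma^{\star}$ denotes a singularity in $\Lambda$ maximizing $V$ among the (finitely many) singularities. Hence
$$\CP(\beta V)=\beta\, V(\sigma^{\star})\qquad\text{for every }\beta>\beta_c.$$
Applying the variational principle to an arbitrary invariant probability $\nu$ yields, for every $\beta>\beta_c$,
$$0\le h_\nu\le \beta\Bigl(V(\sigma^{\star})-\int V\,d\nu\Bigr).$$
Dividing by $\beta$ and using $h_\nu\le \htop(\Lambda)<\infty$, letting $\beta\to\infty$ gives $\int V\,d\nu\le V(\sigma^{\star})$ for every invariant $\nu$. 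Since $\int V\,d\delta_{\sigma^{\star}}=V(\sigma^{\star})$, the singular measure $\delta_{\sigma^{\star}}$ is $V$-maximizing, contradicting the hypothesis of Theorem \ref{th-main2}.

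\textbf{Conclusion and the case $\beta=0$.} With case (2) excluded, Theorem \ref{th-main1} gives for every $\beta>0$ a unique regular equilibrium state with full support, together with analyticity of $\beta\mapsto\CP(\beta V)$ on $(0,\infty)$. For $\beta=0$, equilibrium states are exactly measures of maximal entropy; since $\htop(\Lambda)>0$ (horseshoes appear in the hyperbolic part of the attractor) and entropy is affine, no invariant measure giving positive mass to a singularity can be an MME, so every MME is regular. Uniqueness then follows from Theorem \ref{th-main1} applied to $V\equiv 0$, matching Theorem \ref{th-main3}. Analyticity at $\beta=0$ is obtained by extending the spectral/transfer-operator construction underlying Theorem \ref{th-main1} to a neighborhood of $0$ in $\C$; the required spectral gap persists uniformly in $\beta$ near $0$ precisely because the phase transition has been ruled out. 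The substantive obstacle of the whole theorem is therefore absorbed into Theorem \ref{th-main1}; the additional ingredients specific to Theorem \ref{th-main2} are the short variational contradiction above and the verification that the analytic continuation reaches the endpoint $\beta=0$, which is the only point that asks for some technical care.
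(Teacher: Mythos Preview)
Your proposal is correct and follows the same route as the paper: reduce Theorem~\ref{th-main2} to case~(1) of Theorem~\ref{th-main1} by showing that any singular equilibrium state forces the corresponding Dirac mass to be $V$-maximizing, then invoke Theorem~\ref{th-main1} for uniqueness, full support, and analyticity, with $\beta=0$ handled via Theorem~\ref{th-main3}.

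One small remark on efficiency: your derivation of $\int V\,d\nu\le V(\sigma^\star)$ takes a detour through $\beta\to\infty$ using $h_\nu\le\htop$, whereas the paper observes more simply that $h_\nu\ge 0$ already gives $0\le h_\nu\le\beta\bigl(V(\sigma^\star)-\int V\,d\nu\bigr)$ for any single $\beta>\beta_c$, hence $\int V\,d\nu\le V(\sigma^\star)$ immediately. Your limit argument is valid but not needed.
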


Finally for the case $V\equiv 0$ we have:

\begin{theoremalph}
\label{th-main3}
There exists a unique measure with maximal entropy. 
\end{theoremalph}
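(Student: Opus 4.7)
My plan is to deduce Theorem \ref{th-main3} from Theorem \ref{th-main1} together with the (known) fact that the topological entropy of a 3-dimensional singular hyperbolic attractor is strictly positive. Existence of a measure of maximal entropy is not an issue: the paper has already pointed out that expansiveness of these attractors (proved in \cite{APPV09}) implies upper semi-continuity of the entropy map on the compact convex set of invariant probabilities, so the supremum $h_{top}(\L)$ is attained by at least one invariant measure.

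Pick any such measure of maximal entropy $\mu$. Since entropy is affine, we may decompose $\mu$ into ergodic components and each component is itself an MME. So it is enough to prove that there is a unique \emph{ergodic} MME. By the dichotomy (C1)/(C2) recalled before Theorem \ref{th-main1}, an ergodic MME is either of the form $\delta_\sigma$ for some singularity $\sigma$, or it is regular. The Dirac mass on a fixed point of the time-$1$ map has zero entropy, so it can be an MME only if $h_{top}(\L)=0$. Thus the main step is to rule this out.

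Positivity of $h_{top}(\L)$ can be obtained from the existence of an SRB measure $\mu_{SRB}$ proved in \cite{APPV09}: $\mu_{SRB}$ is a regular hyperbolic measure whose center-unstable Lyapunov exponent $\chi^+$ is strictly positive (by sectional expansion), and it satisfies the Pesin entropy formula $h_{\mu_{SRB}}=\chi^+>0$. Hence $h_{top}(\L)\ge h_{\mu_{SRB}}>0$. Consequently no $\delta_\sigma$ can be an MME, so every ergodic MME is regular.

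Finally, applying Theorem \ref{th-main1} to the H\"older potential $V\equiv 0$, there is at most one regular equilibrium state for $V\equiv 0$, and that measure (when it exists) has full support. Combining this with the previous paragraph, there is exactly one ergodic MME, hence a unique MME overall. The only real obstacle in the argument is the need to invoke positivity of $h_{top}(\L)$; this is a standard consequence of the existence and Pesin formula for the SRB measure in this setting, but it is the one ingredient external to Theorem \ref{th-main1} that the proof genuinely relies on.
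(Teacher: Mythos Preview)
Your argument is correct and matches the paper's own proof essentially line for line: rule out $\delta_\sigma$ as an MME because $h_{top}>0$ while singularities have zero entropy, then invoke Theorem~\ref{th-main1} with $V\equiv 0$ to conclude uniqueness among regular measures. You supply more detail than the paper (in particular the justification of $h_{top}>0$ via $\mu_{SRB}$ and the Pesin formula), but the strategy is identical.
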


\subsection{Plan of the paper and main ingredients of the proof}
The first main ingredient in the proof is to construct a topological object called a mille-feuilles. Roughly speaking, a mille-feuilles is a cross-section to the flow direction with several properties: 
\begin{enumerate}
\item It is a compact collection of \emph{pseudo unstable curves} and is fibered by strong stable curves.
\item It has the rectangle property as in \cite{Bowenlnm}.
\item The first return by  the flow has the Markov property. 
\end{enumerate}
{\it Pseudo unstable curves} are in spirit candidates to be the traces in the cross section of true central-unstable leaves. We remind that these leaves do not exists everywhere but at least every regular point for any regular measure does have such a leaf. One of the difficulty is to define them without  any reference to any pre-chosen invariant measure in view to define the mille-feuille as a topological object.

For the well-known classical Lorenz attractor, it is known that there is a global transversal section. This is the object we want to mimic in our construction of a good cross section. This is done in Section \ref{sec-cross}. We mention that in \cite{AraujoMelbourne19} one of the work consists in constructing a global cross section. We emphasize that in our case the construction in only local. 

The construction of the mille-feuilles is done in Section \ref{sec-millefeuilles}. We first construct a pre-mille-feuilles and then the true mille-feuilles (and also a generalized mille-feuilles). The motivation to construct this topological object is the following.

Still in the Lorenz attractor, the return map in the global Poincaré section has a natural skew-product structure over the interval. There is thus the 3d-dynamics from the flow, the 2d-dynamics in the section and the 1d-dynamics in the interval. 
This has been a motivation to study piecewise expanding maps on the interval with singularities. In that direction we mention works of Hofbauer (see \cite{Hofbauer-inter}) and Buzzi (see \eg \cite{buzzi-survey}). 

To study these 1d-systems, in particular to get existence and uniqueness of the measure of maximal entropy, it is noteworthy that one method consists in studying the Hofbauer-diagram which is a kind of subshift extension over the 1d-dynamics. 

In other words, starting from a 1d-dynamics, the Hofbauer diagram is a kind of abstract 2d-dynamics over the 1d map. 
We believe that this passage to the 2d dynamics is one of the key point to study Thermodynamics formalism for theses maps. In our mind the mille-feuilles we construct below is a geometrical representation of the Hofbauer-diagram. As it is constructed via geometrical decriptions, we believe it makes the 2d-dynamics less abstract and thus easier to be understood. 

Section \ref{sec-inducingschemealanono} re-emploies the technic of local equilibrium state developed by the author along years and summarized  in \cite{lep-survey}. We tried to make it as self-contained as possible. We remind that the key point is to define an induced scheme and the notion of local equilibrium state. We deeply use the Abramov formula which makes a link between the entropy of an induced map and the original one.  
Roughly speaking we show that the mille-feuilles constructed previously admits a unique local equilbrium state for a good induced potential and that this measure can be opened-out in a global invariant measure which turns out to be the natural candidate to be a regular global equilibrium state.

Section \ref{sec-endproof} is devoted to the end of the proofs of the Theorems. The proof of  the first part of Theorem \ref{th-main1} simply consists in separating the cases. Either there is no regular equilibrium state, or there is one, and then it must be unique because it must coincide with the unique local equilibrium on any mille-feuilles. 
The two other proofs are then simple consequences of the fact that assumptions yield that the second case holds and not the first one. 

Finally, analyticity for $\CP(\be)$ is proved in the last subsection.

\subsection{Acknowledgment}
Part of this work has been written as the author was visiting D. Yang at Soochow University. We would like to thank Soochow University for kind hospitality and D. Yang for having answering to our technical questions. 

The notion of mille-feuilles and some of the ideas behind were already present in a unpublished paper of the author with V. Pinheiro that can be find here \cite{lep-Pinheiro}.

\section{Cross-sections with good properties}\label{sec-cross}

\subsection{Special local cross-sections}

\begin{definition}
\label{def-lambdacunstableman}
Let $\wt\l$ be  positive real number. Let $x$ be in $\L$. 
We say that $x$ is $\wt\l$-hyperbolic if there exists  $\kappa_{0}>0$ such that $$W^{uu}_{loc}(x):=\left\{y,\ \forall\,t>0\ d(f_{-t}(y),f_{-t}(x))\le \ko e^{-\wt\l t}d(x,y)\right\}$$
is a non-trivial  immersed $C^{1}$-manifold. 
\end{definition}
if $x$ is $\wt\l$-hyperbolic, then so is any $f_{t}(x)$, with $t\in \R$. Moreover, we claim that 
$$\limsup_{t\to-\8}\frac1{|t|}\log |Df_{t}(x)|_{E^{uu}(x)}|\le -\wt\l,$$
holds where $E^{uu}(x)=T_{x}W^{uu}_{loc}(x)$. 
This yields a splitting $$E^{cu}(f_{t}(x))=E^{uu}(f_{t}(x))\oplus <X>.$$

Furthermore, if $0<\wt\l'<\wt\l$, every $\wt\l$-hyperbolic point is also $\wt\l'$-hyperbolic. 

If $x$ is $\wt\l$-hyperbolic, then the set $\disp\S:=\bigcup_{y\in W^{uu}_{loc}(x)}W^{ss}_{loc}(y)$ is transverse to the flow direction (at least locally around $x$). 
It is  a continuous surface and we remind that the stable foliation is H\"older continuous. 



\begin{definition}
\label{def-transversal}
A cross-section is a set satisfying the following:
\begin{enumerate}
\item It is a local proper topological surface $\S=\ol{\inte\S}$ (for the 2d-topology),
\item it is $W^{ss}_{loc }$-foliated,
\item it is transversal to the vector field $X$,
\item it contains one $\ko$-unstable curve $W^{uu}_{loc}(x_{0})$ (for some $\ko$), referred to as the basis of the cross-section. 
\item Extremities of the basis do not belong to stable foliation of some periodic point.
\end{enumerate}
If $\S$ is a cross-section, the $\eps$-time neighborhood of $\S$ is the set $\disp\bigcup_{t\in[-\eps,\eps]}f_{t}(\S)$. It will be denoted by $\S_{[-\eps,\eps]}$. 
\end{definition}

For $x$ in the basis  $W^{uu}_{loc}(x_{0})$ of $\S$, $T_{x}W^{uu}_{loc}(x_{0})$ and $T_{x}W^{ss}_{loc}$ are well defined and are non-colinear. We set 
$$T_{x}\S:=\text{span}\{T_{x}W^{uu}_{loc}(x_{0}), T_{x}W^{ss}_{loc}(x_{0})\}. $$

For simplicity we shall always assume that a cross-section satisfies  for every $x$ of the basis
$$\measuredangle(T_{x}\S,X(x))>\frac\pi4.$$
By definition a cross-section does not contain singularity.

\begin{termino}
We say that $\S$ has positive* $\mu$-measure if $\disp\mu(\Seps)>0$ holds for any positive $\eps$.  
\end{termino}
\begin{remark}
\label{remasterix}
In general we shall put some asterisque * do indicate the property relative to measures has to be understood following our terminology. 
$\blacksquare$\end{remark}

\begin{definition}
\label{def-WssSigma}
If $\S$ is a cross section and $x$ belongs to $\L\cap\S$, the unstable local leaf $W_{\S}^{ss}(x)$ is defined by 
$$W_{\S}^{ss}(x)=\S\cap W^{ss}_{loc}(x).$$
\end{definition}

Let $\CT$ be the basis of $\S$ and $\Theta_{\CT}$ be the natural projection onto $\CT$: 
$$\Theta_{\CT}(x)=\Theta_{\CT}(y)\iff y\in W^{ss}_{\S}(x).$$


%
%

\bigskip

If $\S$ is a cross-section one can define the first return map, say $F_{\S}$, into $\S$ by the flow $f_{t}$. The return time is denoted by $r_{\S}$:
$$F_{\S}(x)=f_{r_{\S}(x)}(x),\text{ with }r_{\S}(x)=\min\{t>0,\ f_{t}(x)\in\S\}\le +\8.$$

\begin{lemma}
\label{lem-returncrosssec}
For any cross section, there are points with finite return time. 
\end{lemma}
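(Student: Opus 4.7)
The plan is to exploit transitivity of the attractor $\L$ together with the transversality of $\S$ to the flow. First I would pick an interior point $z$ of the basis $W^{uu}_{loc}(x_{0})$ (away from its endpoints); such a $z$ lies in $\L$, since unstable manifolds of attractor points are contained in $\L$, and $z$ is also an interior point of $\S$ in the $2$d-topology, because stable leaves through nearby basis points sweep out a $2$d-neighborhood of $z$ inside $\S$. Using the transversality condition $\measuredangle(T_{z}\S,X(z))>\pi/4$ and continuity of $X$ and of the stable foliation, the flow-box map $(y,t)\mapsto f_{t}(y)$ is a local homeomorphism from a product $V\times(-\eps,\eps)$, where $V\subset\inte{\S}$ is a $2$d-neighborhood of $z$, onto an honest open subset $U\subset M$ containing $z$; inside $U$, each orbit piece intersects $V$ in exactly one point.

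Now fix $p\in\L$ with dense forward orbit, which exists by transitivity. Since $U$ is open in $M$ and meets $\L$ at $z$, there is an unbounded sequence of times $0<t_{1}<t_{2}<\ldots$ with $f_{t_{i}}(p)\in U$. Writing $f_{t_{i}}(p)=f_{s_{i}}(y_{i})$ with $y_{i}\in V$ and $|s_{i}|<\eps$, and because $t_{i}\to+\8$, some consecutive gap $t_{i+1}-t_{i}$ must exceed $2\eps$; for such an index $i$ one has $y_{i+1}=f_{\tau}(y_{i})$ with $\tau=(t_{i+1}-t_{i})+(s_{i}-s_{i+1})>0$. Hence $y_{i}\in\S$ satisfies $r_{\S}(y_{i})\le \tau<+\8$, which is exactly the claim.

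The argument is short, so the only real subtlety is the initial geometric setup. It must be checked that a merely topological surface $\S=\ol{\inte{\S}}$ admits a genuine flow-box open in $M$, and that the $1$-dimensional basis supplies a point in the $2$-dimensional interior; the transversality assumption at the basis together with continuity of the stable foliation handles both, once $V$ and $\eps$ are chosen small enough so that the decomposition $x=f_{s}(y)$ with $y\in V$, $|s|<\eps$ is unique. This is the only place where the non-smoothness of $\S$ enters.
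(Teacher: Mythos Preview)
Your argument is correct and mirrors the paper's: both thicken $\S$ to an open flow-box (the paper works directly with $\Seps$), invoke transitivity to produce a recurrent orbit through this open set, and then time-shift the hits back onto $\S$. The only slip is the phrase ``because $t_{i}\to+\8$, some consecutive gap $t_{i+1}-t_{i}$ must exceed $2\eps$'': an unbounded increasing sequence can perfectly well have all gaps $\le 2\eps$, so this inference is not valid as written --- but the repair is trivial, since by density of the forward orbit you may simply \emph{choose} two visit times $t_{1}<t_{2}$ to $U$ with $t_{2}-t_{1}>2\eps$, and then your computation of $\tau>0$ goes through.
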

\begin{proof}
For a cross-section $\S$, consider the compact set with non-empty interior, $\disp\Seps$. Transitivity for $f_{t}$ yields existence of a dense set of point in $\disp\Seps$ having dense orbit. All these points have infinitely many returns into $\disp\Seps$ (with return time $>\eps$ ). If $y$ is such a point and $r$ is the first return, then, up to a time-translation one may assume $y\in \S$ and  $f_{r'}(y)\in\S$ holds for some $r-\eps<r'<r$. 
\end{proof}

\begin{lemma}
\label{lem-perioddense}
For a cross-section $\S$, $F_{\S}$-periodic orbits are dense in $\S$.
\end{lemma}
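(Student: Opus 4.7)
The strategy is to reduce density of $F_{\S}$-periodic points in $\S$ to the classical density of periodic orbits of the flow $f_{t}$ in $\L$, using transversality of $\S$ to the vector field $X$.

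Fix $x\in\S$ and a small relatively open neighborhood $U\subset\S$ of $x$. For $\eps>0$ small, the tube $U_{[-\eps,\eps]}:=\bigcup_{|t|\le\eps}f_{t}(U)$ is an open subset of $M$ containing $x$ and meeting $\L$ (up to shrinking $U$, the basis of $\S$ lies in $\L$ by construction). The plan is to exhibit a periodic orbit $\gamma$ of the flow passing through $U_{[-\eps,\eps]}$: if such a $\gamma$ exists, then some $z\in\gamma\cap U_{[-\eps,\eps]}$ flows to $y=f_{t_{0}}(z)\in U$ in time $|t_{0}|\le\eps$. By the uniform transversality $\measuredangle(T_{x}\S,X(x))>\pi/4$ from Definition \ref{def-transversal} and compactness of $\gamma$ modulo one period, $\gamma$ meets $\S$ at finitely many points per period, and these points are cyclically permuted by $F_{\S}$, so $y$ is $F_{\S}$-periodic and lies in $U$. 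Letting $U$ and $\eps$ shrink yields $F_{\S}$-periodic points arbitrarily close to $x$.

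The substantive input is the existence of such a $\gamma$, i.e. density of $f_{t}$-periodic orbits in $\L$. This is classical for 3-dimensional singular hyperbolic attractors: every ergodic regular invariant probability measure is hyperbolic, because the dominated splitting $E^{ss}\oplus E^{cu}$ together with sectional expansion forces a definite sign on every Lyapunov exponent (see the discussion in \cite{APPV09, MPP04}). Applying the Anosov closing lemma to $\mu$-typical recurrent points and combining with transitivity of $\L$ yields density of flow-periodic orbits in $\L$, and in particular produces a $\gamma$ through any prescribed open set meeting $\L$, such as $U_{[-\eps,\eps]}$.

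The main obstacle is the singularity $\sigma$: orbits that approach $\sigma$ have unbounded return times and the local dynamics is not uniformly hyperbolic, so the Anosov closing lemma does not apply there directly. For the present lemma this is harmless, because cross-sections are disjoint from singularities (so $x$ is automatically regular) and we only need a periodic orbit through a neighborhood of $x$; the closing argument applies along any regular stretch of a recurrent orbit that stays bounded away from $\sigma$ long enough, and transitivity ensures such recurrent orbits exist arbitrarily close to any regular point of $\L$.
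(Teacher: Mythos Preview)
Your reduction is correct and matches the paper's approach: both arguments rest on density of $f_t$-periodic orbits in $\L$, and then observe that a flow-periodic orbit meeting $\S$ yields an $F_{\S}$-periodic point. The paper's proof is a single sentence: periodic orbits are dense in $\L$ because $\mu_{SRB}$ exists, has full support, and $\mu_{SRB}$-a.e.\ point is regular (hence hyperbolic, so Katok's closing lemma applies on a set of full measure).

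The one place where your argument is less clean than the paper's is the justification of density of flow-periodic orbits. You invoke ``hyperbolicity of every regular ergodic measure $+$ Anosov closing $+$ transitivity''. Two remarks: first, the closing lemma needed here is Pesin/Katok's nonuniform version, not the uniform Anosov closing lemma, since the hyperbolicity is only nonuniform near the singularity. Second, closing applied to $\mu$-typical points produces periodic orbits dense only in $\mathrm{supp}(\mu)$; your appeal to transitivity to upgrade this to all of $\L$ is vague (a transitive point need not lie in a Pesin block, so one cannot close along it directly). The paper sidesteps this by using specifically $\mu_{SRB}$, whose support is all of $\L$; this gives density for free without any extra argument. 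Your citation of \cite{APPV09, MPP04} is adequate as a black box, but if you want a self-contained line, invoking the full support of $\mu_{SRB}$ is the cleanest route.
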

\begin{proof}
Actually periodic points are dense in $\L$.  This is a direct consequence of the existence of  $\mus$. This measure has full support and \AE point is regular.  
\end{proof}

Let $\S$ be a cross-section such that  $\Seps$ has positive measure for some regular measure $\mu$. Then, the Main Theorem on Special Representation of Flows (see \cite{CFS}) yields that one can locally represent the conditional measure $\mu$ as $d\mu\propto d\mu^{\S}\otimes dt$. Moreover, $\mu^{\S}$ is $F_{\S}$-invariant. 

\begin{definition}
\label{def-projsurS}
Let $\S$ be a cross-section. An $\eps$-identification $\iota$ is the map from $\disp\Seps$ onto $\S$ defined by 
$$\io(f_{t}(x))=x.$$ 
It is defined for some small positive $\eps$. 
\end{definition}

\begin{remark}
\label{rem-iotabilip}
We point-out that if $\S'$ is another cross-section defined in the neighborhood $\disp\Seps$, then the restriction of $\io$  from $\S'$ to $\S$ is bi-Lipschitz with  constants  only depending on the slopes of the cross-sections and on the size of the neighborhood. 
$\blacksquare$\end{remark}

\subsection{S-adapted cross sections and associated 1-dimensional dynamics}

\subsubsection{Definition and abundance of SACS}

\begin{definition}
\label{def-goocrosssec}
An $s$-adapted cross section (SACS in short)  is a cross-section $\S$ such that for every $x\in\L\cap\S$  
$$F_{\S}(W^{ss}_{\S}(x))\subset \grosinte{W^{ss}_{\S}(F_{\S}(x))}$$
holds as soon as $F_{\S}(x)$ is well-defined.
\end{definition}

\begin{proposition}
\label{prop-goodcrosssec}
Any point in $\Gamma_{\ko}$ for some $\ko$ belongs to the interior of some SACS. 
\end{proposition}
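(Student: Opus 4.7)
The plan is to build the desired SACS by saturating the local unstable curve at $x$ with strong stable leaves, then to trim and adjust until all five items of Definition~\ref{def-transversal} together with the SACS condition are met. Since $x\in\Gamma_{\kappa_{0}}$, the curve $W^{uu}_{loc}(x)$ is a nontrivial $C^{1}$-arc and, as noted after Definition~\ref{def-lambdacunstableman}, the saturation $\Sigma_{0}:=\bigcup_{z\in W^{uu}_{loc}(x)}W^{ss}_{loc}(z)$ is a continuous surface transverse to $X$ with $x$ in its interior; this is my initial candidate. Shortening $W^{uu}_{loc}(x)$ slightly, I can arrange for its endpoints to avoid the stable foliation of any periodic orbit (a meager condition), securing item~(5) of Definition~\ref{def-transversal}.

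The crux is an automatic compatibility: for cross-sections built as unions of full local stable leaves, the first return time $r_{\Sigma_{0}}$ is constant on each stable fiber. Indeed, for $y,y'\in W^{ss}_{loc}(z)$ with $z\in W^{uu}_{loc}(x)$, flow invariance of $W^{ss}$ gives $f_{t}(y')\in W^{ss}_{loc}(f_{t}(y))$ for every $t\ge 0$. At $t_{0}=r_{\Sigma_{0}}(y)$ the point $F_{\Sigma_{0}}(y)=f_{t_{0}}(y)$ lies in some $W^{ss}_{loc}(z')\subset\Sigma_{0}$, so the entire leaf $W^{ss}_{loc}(f_{t_{0}}(y))=W^{ss}_{loc}(z')$ is already contained in $\Sigma_{0}$, forcing $f_{t_{0}}(y')\in\Sigma_{0}$ as well. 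Hence $r_{\Sigma_{0}}(y')\le r_{\Sigma_{0}}(y)$, and the reverse inequality is symmetric, giving equality. It follows immediately that $F_{\Sigma_{0}}(W^{ss}_{\Sigma_{0}}(y))=f_{t_{0}}(W^{ss}_{loc}(z))\subset W^{ss}_{loc}(z')=W^{ss}_{\Sigma_{0}}(F_{\Sigma_{0}}(y))$, so flow stable fibers are preserved by the return map.

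It remains to upgrade this inclusion to strict containment in the interior. The stable contraction $\|Df_{t}|_{E^{ss}}\|\le Ce^{-\lambda t}$ gives the length bound $|F_{\Sigma_{0}}(W^{ss}_{\Sigma_{0}}(y))|\le Ce^{-\lambda r_{\Sigma_{0}}(y)}\,|W^{ss}_{\Sigma_{0}}(y)|$. I then shrink $\Sigma_{0}$ to a smaller cross-section $\Sigma$ in two steps. First, I truncate $W^{uu}_{loc}(x)$, keeping $x$ interior, so that the flow-box $\Sigma_{[-\eps,\eps]}$ embeds for an $\eps>0$ large enough that $Ce^{-\lambda\eps}<1$; this forces $r_{\Sigma}(y)\ge\eps$ uniformly, and hence strict contraction of stable fibers under $F_{\Sigma}$. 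Second, I choose the common length of the stable fibers of $\Sigma$ strictly below the local stable manifold radius at every return image point in $\Lambda\cap\Sigma$; then images of stable fibers are strictly shorter than ambient fibers, and since by construction $F_{\Sigma}(y)$ sits in the interior of its target fiber, the contraction keeps the whole image away from the boundary. This yields $F_{\Sigma}(W^{ss}_{\Sigma}(y))\subset\mathring{W^{ss}_{\Sigma}(F_{\Sigma}(y))}$, which is the SACS property.

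The main obstacle I anticipate is securing the uniform lower bound $r_{\Sigma}\ge\eps$: on an arbitrary cross-section this may fail near singularities or near short periodic orbits. The remedy is to localize $\Sigma$ inside a small tube around $x$ where the flow is conjugate to a translation (using that $x$ is regular, which is automatic from $x\in\Gamma_{\kappa_{0}}$), so that any orbit starting in $\Sigma$ must first leave this tube and re-enter before coming back to $\Sigma$, which by uniform continuity of the flow takes at least a definite amount of time.
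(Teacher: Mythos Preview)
Your argument has a genuine gap at the step where you pass from ``the image fiber is short'' to ``the image fiber lies in the \emph{interior} of the target fiber''. Contraction along $E^{ss}$ indeed gives $|F_{\Sigma}(W^{ss}_{\Sigma}(y))|\le Ce^{-\lambda r_{\Sigma}(y)}|W^{ss}_{\Sigma}(y)|$, but this short arc sits around $F_{\Sigma}(y)$ inside the target fiber $W^{ss}_{\Sigma}(F_{\Sigma}(y))$, and nothing you have established prevents $F_{\Sigma}(y)$ from landing arbitrarily close to an \emph{endpoint} of that fiber; if it does, even a very short image arc will spill over the boundary. Your ``second step'' (choosing fiber length below some local stable radius at return points) does not address this, and the sentence ``by construction $F_{\Sigma}(y)$ sits in the interior of its target fiber'' is precisely what needs to be proved. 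The same issue already undermines your earlier claim that $r_{\Sigma_{0}}$ is constant on fibers: the asserted equality $W^{ss}_{loc}(f_{t_{0}}(y))=W^{ss}_{loc}(z')$ conflates local pieces centered at different points; what is actually true is that $f_{t_{0}}(y')$ lies on the \emph{global} stable leaf through $z'$, but it may fall just outside the local piece $W^{ss}_{\Sigma_{0}}(z')$ that belongs to the cross-section, so $r_{\Sigma_{0}}(y')$ could differ from $t_{0}$.

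The paper resolves exactly this positioning problem, and this is the missing idea in your proof. Using Lemma~\ref{lemma-nointervalinwss} (total disconnectedness of $\Lambda\cap W^{ss}_{loc}(x)$), one chooses the two endpoints $x_{\pm}$ of the stable fiber through $x$ to lie \emph{outside} $\Lambda$, together with small balls $B(x_{\pm},\eps)$ disjoint from $\Lambda$ (a ``dumbbell''); this choice is then propagated along the basis so that every fiber of $\Sigma$ has its endpoints inside these $\Lambda$-free balls. Now any return point $F_{\Sigma}(y)\in\Lambda$ is automatically at distance at least $\min(\delta_{-},\delta_{+})$ from the fiber endpoints, and once the basis is shrunk so that return times are large enough to make the contracted image length $\ll\min(\delta_{-},\delta_{+})$, interior containment follows. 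Without some device of this kind to keep $\Lambda$ away from the stable boundary of $\Sigma$, your contraction argument cannot conclude.
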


The proof of Proposition \ref{prop-goodcrosssec} needs a lemma which gives a topological property for stable leaves. 

\begin{lemma}[see \cite{AAPP} Lem. 3.2]
\label{lemma-nointervalinwss}
Let $x$ be in $\L$. Let $W^{ss}_{loc}(x)$ be a piece of local stable manifold. Then, $\L\cap W^{ss}_{loc}(x)$ is totally disconnected. 
\end{lemma}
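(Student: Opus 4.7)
The plan is to argue by contradiction: assume that $\L\cap W^{ss}_{loc}(x)$ contains a non-degenerate connected subset $I$. Since $W^{ss}_{loc}(x)$ is one-dimensional in the 3-dimensional setting, $I$ is an arc. The contraction estimate for $E^{ss}$ in the singular hyperbolic splitting yields $\diam(f_t(I))\le Ce^{-\l t}\diam(I)\to 0$ as $t\to\8$, so all points of $I$ share the same $\omega$-limit set $\Omega\subset\L$, and the vanishing of the diameters forces $\Omega$ to be a single flow-orbit closure. Either $\Omega=\{\s\}$ for some singularity $\s$, or $\Omega$ is the closure of a regular orbit.

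In the regular case, I would exploit full support of $\mus$ on $\L$ and the fact that $\mus$-almost every point is $\wt\l$-hyperbolic (for a uniform $\wt\l>0$) to pick $p\in\Omega$ admitting a local strong unstable manifold $W^{uu}_{loc}(p)$. Through $p$ one builds a cross-section $\S$ with basis $W^{uu}_{loc}(p)$; choose $t_n\to\8$ with $f_{t_n}(y_0)\to p$ for a fixed interior point $y_0$ of $I$. Via an $\eps$-identification (Definition~\ref{def-projsurS}), the arcs $f_{t_n}(I)$ project to arcs $J_n\subset\S$ entirely contained in the strong-stable fibers $W^{ss}_\S$ and collapsing onto $\Theta_{\CT}(p)$. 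Since $p$ sits in the interior of a SACS (Proposition~\ref{prop-goodcrosssec}), the return map $\FS$ is defined nearby, and sectional expansion strictly expands the direction transverse to the strong-stable foliation. Pulling $J_n$ back by sufficiently many iterates of $\FS$ must then produce a piece of positive $W^{uu}$-length that, via the bi-Lipschitz identification of Remark~\ref{rem-iotabilip}, corresponds to the original arc $I$ itself, contradicting $I\subset W^{ss}_{loc}(x)$.

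In the singular case $\Omega=\{\s\}$, every forward orbit from $I$ tends to $\s$. Here I would invoke the Lorenz-like structure of singularities of a 3-dimensional singular hyperbolic attractor, namely $W^{ss}(\s)\cap\L=\{\s\}$: otherwise a regular orbit of $\L$ trapped inside $W^{ss}(\s)$ would, under backward iteration, exit any trapping neighborhood through a direction that lacks sectional expansion, violating the invariance of $\L$ along that orbit. Combined with the fact that $f_t(I)$ accumulates on $W^{ss}(\s)$ as it collapses, this forces each interior point of $I$ to land on $\s$ in finite time, which is absurd.

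The hardest step is the regular case: turning the collapse-and-pull-back heuristic into a rigorous argument requires uniform control on both the strong-stable contraction and the transverse expansion under $\FS$, together with H\"older continuity of the strong-stable foliation (as in \cite{AraujoMelbourne19}) so that the $\eps$-identifications remain bi-Lipschitz on the relevant scales. All of these ingredients are developed in Sections~\ref{sec-cross}--\ref{sec-millefeuilles}, so in a fully self-contained write-up this lemma would naturally be proved in tandem with the SACS machinery rather than cited from~\cite{AAPP}.
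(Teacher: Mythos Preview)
Your argument has a genuine gap at its core. In the regular case, the arcs $J_n$ lie entirely inside strong-stable fibers $W^{ss}_\Sigma$ of the cross-section, with zero extent transverse to that foliation. Applying $F_\Sigma^{-1}$ sends stable fibers to (longer) stable fibers and \emph{contracts} the transverse direction; it cannot manufacture $W^{uu}$-length out of an arc tangent to $E^{ss}$. Sectional expansion lives in $E^{cu}$, transverse to $E^{ss}$, and gives no leverage on such an arc. The claim that pulling $J_n$ back ``produces a piece of positive $W^{uu}$-length'' is simply false. (There is also circularity: you invoke Proposition~\ref{prop-goodcrosssec}, whose proof in the paper relies on the present lemma.) The singular case fares no better: $\Omega(y)=\{\sigma\}$ places $y$ on the two-dimensional stable manifold of $\sigma$, meaning $f_t(y)\to\sigma$ asymptotically, never in finite time. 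And the initial dichotomy is already wrong: the common $\omega$-limit set $\Omega$ can be any compact invariant subset of $\Lambda$, including $\Lambda$ itself.

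The paper's proof (following \cite{AAPP}) takes an entirely different route. Reverse time, so that $E^{ss}$ becomes a uniformly expanding bundle for $f_{-1}$; then \cite{BV00} furnishes an invariant measure with absolutely continuous disintegration along those leaves. Back in forward time this gives a piece of strong stable leaf on which Lebesgue-a.e.\ point lies in $\Lambda$ and carries a genuine local strong-unstable manifold. Since $\Lambda$ is an attractor, those $W^{uu}$ pieces and their forward images stay in $\Lambda$, forcing $\Lambda$ to have positive Lebesgue measure in $M$---a contradiction. The ingredient you are missing is this passage through a $u$-Gibbs state for the reversed dynamics; the cross-section machinery of Sections~\ref{sec-cross}--\ref{sec-millefeuilles} does not supply a substitute.
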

\begin{proof}
The proof is done by contradiction. Let us assume that the small $s$-interval $W^{ss}_{loc}(x)$ is included in $\L$. Then we consider the map $f_{-1}$. It is partially hyperbolic with uniformly expanding bundle. Using \cite{BV00}, it admits an invariant measure with absolutely continuous disintegration along unstable leaf. 

Going back to the initial dynamics, $f_{1}$ admits an invariant measure with absolutely continuous disintegration on stable leaves. Consider then a small piece of stable leaf, say $W^{ss}_{loc}(y)$ where $Leb^{ss}$-almost every point is in $\L$. All these points $z$ admit a  local strong-unstable manifold with positive lenght. By construction $z$ is in $\L$ and as $\L$ is an attractor, any forward image  of $W^{uu}_{loc}(z)$  stays in the neighborhood of $\L$. By construction, any backward image of $W^{uu}_{loc}(z)$ also stays in the neighborhood of $\L$. This yields that $\L$ has positive Lebesgue measure. This is in contradiction within $\L$ being an attractor  (see \cite{AAPP}). 

\end{proof}

\begin{proof}[Proof of Prop. \ref{prop-goodcrosssec}]
Let $x\in\L$  be a regular point with respect to a regular measure $\mu$. Consider a small stable leaf $W^{ss}_{\de}(x)$ and adjust the length such that both extremal points are not in $\L$. Lemma \ref{lemma-nointervalinwss} shows this is possible.

Call these two extremities $x_{-}$ and $x_{+}$. There exists small balls, say $B(x_{\pm},\eps)$ centered in $x_{\pm}$ with empty intersection with $\L$. 
Thus, there exist positive numbers $\de_{\pm}$ such that the $s$-intervals $[x_{-},x_{-}+\delta_{-}]$ and $[x_{+}-\de_{+},x_{+}]$ have empty intersection with $\L$.
The picture (see Fig. \ref{Fig-halt1}) looks like a dumbbell with bar equal to $W^{ss}_{\de}(x)$ and the two balls at extremities (outside $\L$). 

\begin{figure}[htbp]
\begin{center}
\includegraphics[scale=0.5]{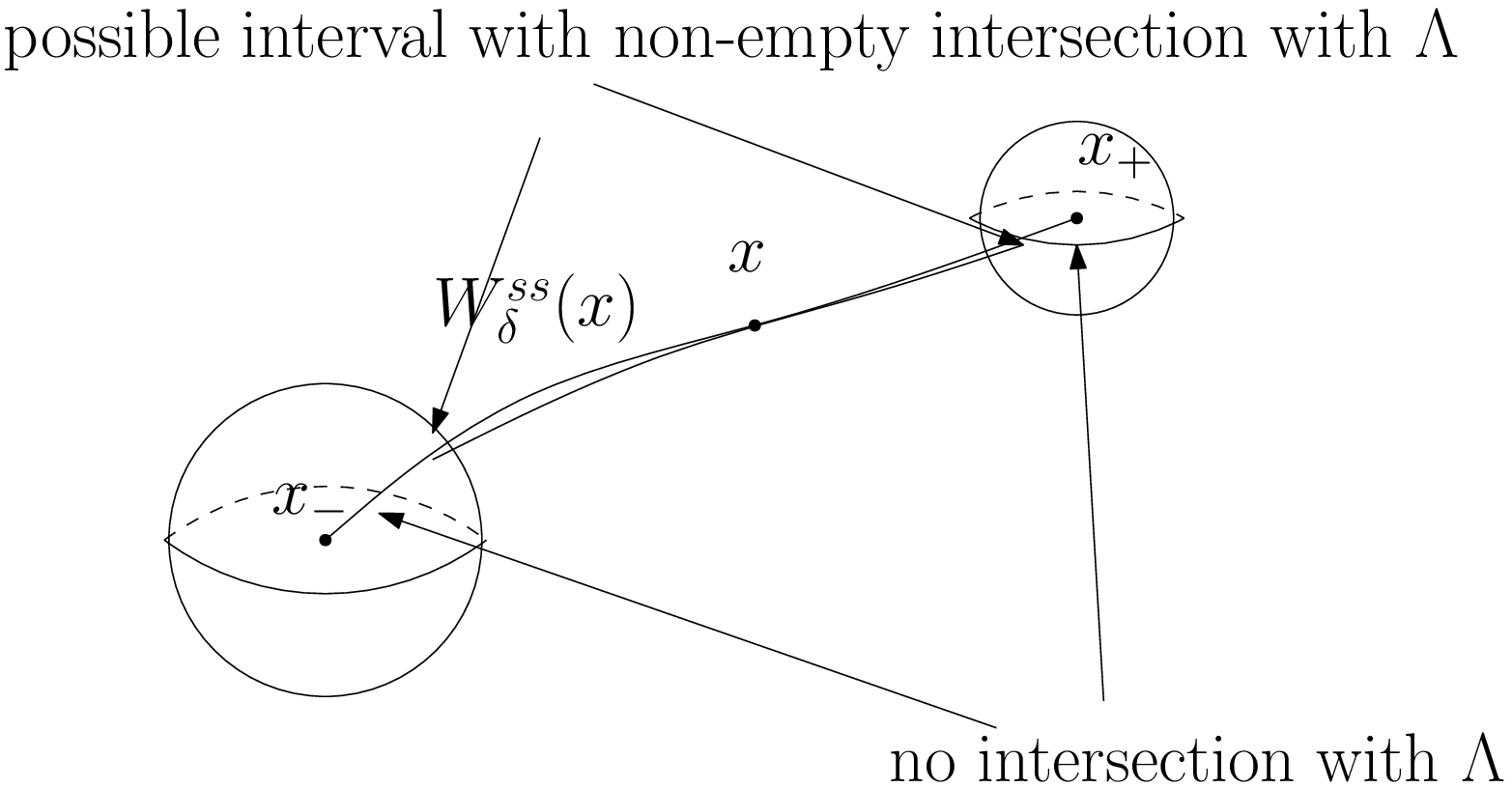}
\caption{$W^{ss}_{\de}(x)$ with extremities outside $\L$.}
\label{Fig-halt1}
\end{center}
\end{figure}

We remind that there exists an open neighborhood $B(x,\eps)$ of $x$ such that all the points in that neighborhood have a local stable manifold. This holds because $\Lambda$ is an attractor. 
Then, consider $W^{uu}_{loc}(x)\cap B(x,\eps)$, and construct the surface $\S$ obtained by taking the union of all of $W^{ss}_{\de}(y)$ where $y$ runs over $W^{uu}_{loc}(x)\cap B(x,\eps)$ and $W^{ss}_{\de}(y)$ is adjusted such that extremities are in $B(x_{\pm},\eps)$.  We also adjust the size of $W^{uu}_{loc}(x)$ to be sure that extremal points do not belong to the stable manifold of some periodic point. Then, we claim that $\S$ is an S-adapted cross section. 

\begin{figure}[htbp]
\begin{center}
\includegraphics[scale=0.6]{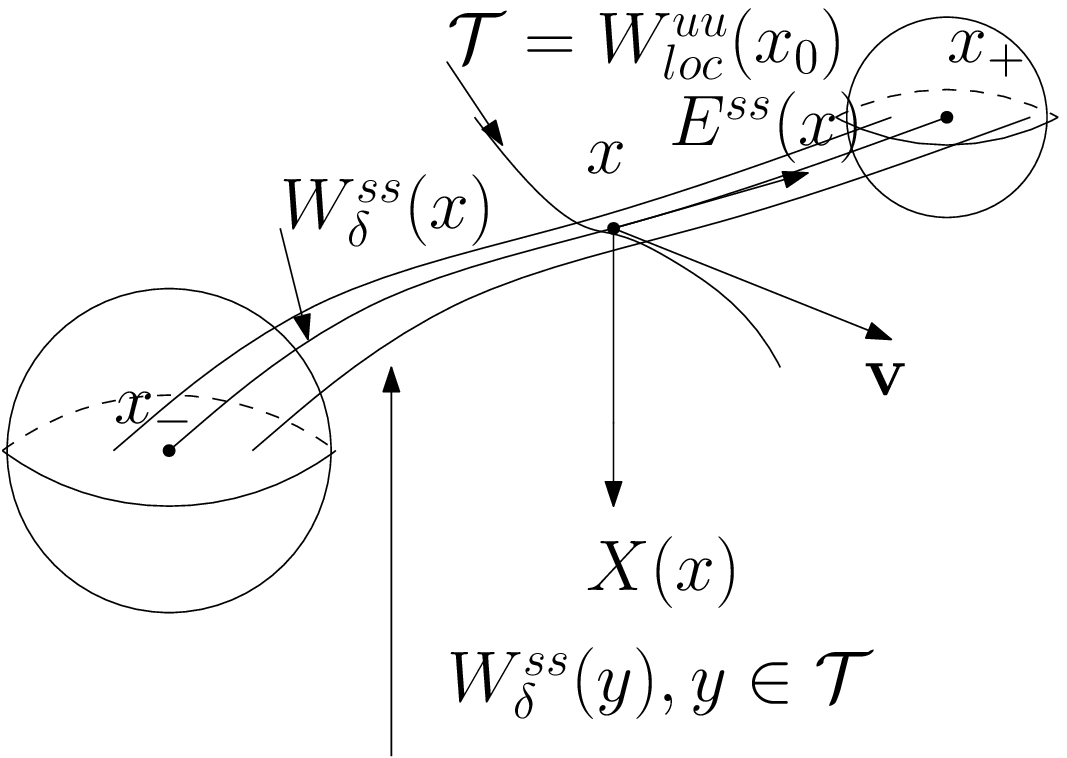}
\caption{Construction of $\S$}
\label{Fig-halt3}
\end{center}
\end{figure}

Indeed,  if $x$ is not a periodic point, one can increase the first return time in $\S$ by choosing sufficiently small $\eps$. Hence, one choose such an $\eps$ such that for any return time $n$ (of any point) $2\de. \l^{n}<<\min(\de_{-},\de_{+})$. 
 Then, by construction, any return in $\S$ maps a piece of unstable leaf $W^{ss}_{\S}$ on a smaller piece with length much smaller than $\de_{\pm}$ (see Fig. \ref{fig-halt4}).  
 
 \begin{figure}[htbp]
\begin{center}
\includegraphics[scale=.5]{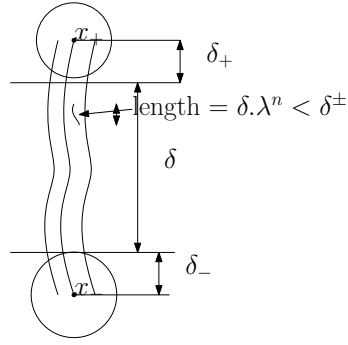}
\caption{Return too small to cross the balls of the dumbbell}
\label{fig-halt4}
\end{center}
\end{figure}

If $x$ is periodic, one can assume it is fixed. Then no other point in some neighborhood is fixed. Again, choosing a very small $\eps$ one can increase any return time (except the one for $W^{ss}_{\S}(x)$) and the same argument holds. 
\end{proof}

The proof immediately extends to a more general result
\begin{proposition}
\label{prop2-goodcrosssec}
For any regular measure $\mu$, for any  regular point $x$ with respect to $\mu$, there exists $\ko$ such that
$x$ belongs to the interior of some SACS.
\end{proposition}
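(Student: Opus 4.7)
The strategy is to reduce Proposition \ref{prop2-goodcrosssec} to the already-proved Proposition \ref{prop-goodcrosssec} by showing that every $\mu$-regular point $x$, for any regular measure $\mu$, is automatically $\wt\l$-hyperbolic for some $\wt\l>0$ in the sense of Definition \ref{def-lambdacunstableman}. Once this is established, $x\in\Gamma_{\ko}$ for some $\ko>0$, and the dumbbell construction carried out in the proof of Proposition \ref{prop-goodcrosssec} applies verbatim to produce an SACS with $x$ in its interior. Concretely, the plan is: (i) extract a Pesin-type local strong-unstable manifold at $\mu$-regular points from Oseledec's theorem and the sectional expansion hypothesis; (ii) invoke Proposition \ref{prop-goodcrosssec} on $x$.

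For step (i), first decompose $\mu$ into ergodic components (each still regular, since singularities carry no $\mu$-mass) and work with one ergodic component. Apply Oseledec's theorem along $\mu$-typical orbits, using the continuous invariant splitting $T_{\L}M=E^{ss}\oplus E^{cu}$. The contraction hypothesis gives a Lyapunov exponent $\le -\lambda$ on $E^{ss}$. The bundle $E^{cu}$ is $2$-dimensional and contains the flow direction $\langle X\rangle$, whose Lyapunov exponent is $0$; the sectional expansion estimate $|\det Df_{t}|_{E^{cu}}|\ge Ce^{\lambda t}$ then forces the complementary Lyapunov exponent inside $E^{cu}$ to be at least $\lambda>0$. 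Pesin's non-uniformly hyperbolic theory (applied to the time-$1$ map $f_{1}$) now furnishes at each $\mu$-regular point $x$ a non-trivial $C^{1}$ immersed curve on which $f_{-t}$ contracts at exponential rate; writing this as
\[
W^{uu}_{loc}(x)=\bigl\{y\,:\,\forall\,t>0,\ d(f_{-t}(y),f_{-t}(x))\le \ko e^{-\wt\l t}d(x,y)\bigr\}
\]
for suitable $\ko=\ko(x)>0$ and $0<\wt\l\le \lambda$ yields precisely $x\in\Gamma_{\ko}$.

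For step (ii), invoke Proposition \ref{prop-goodcrosssec}: using Lemma \ref{lemma-nointervalinwss} pick a piece $W^{ss}_{\de}(x)$ whose endpoints $x_{\pm}$ lie outside $\L$, sweep it along $W^{uu}_{loc}(x)\cap B(x,\eps)$ to build the cross-section $\S$, and shrink $\eps$ so that for every return time $n$ one has $2\de\l^{n}\ll \min(\de_{-},\de_{+})$. The dichotomy between $x$ being non-periodic and $x$ being a fixed point (other points in a small neighborhood not being fixed) is handled as in the proof of Proposition \ref{prop-goodcrosssec}.

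The main obstacle is the verification that $\mu$-regular points possess a genuine non-trivial local strong-unstable manifold, i.e.\ the transition from Oseledec's positive exponent to a Pesin manifold in the context of a singular hyperbolic flow. The point is delicate because the flow has singularities accumulated by the attractor; however, since we restrict to $\mu$-regular points and $\mu$ gives no mass to singularities, the standard Pesin construction for the time-$1$ diffeomorphism applies on a full-measure subset, providing the constants $\ko,\wt\l$ on a measurable set that exhausts $\mu$-almost every point as $\ko\to\8$ and $\wt\l\to 0^{+}$. This suffices, since Proposition \ref{prop2-goodcrosssec} only asks for \emph{some} $\ko$ depending on $x$.
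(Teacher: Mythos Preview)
Your proposal is correct and takes essentially the same approach as the paper. The paper's own proof is the single line ``the proof immediately extends''; you have simply made explicit the one ingredient the paper leaves implicit, namely that Oseledec together with sectional expansion in $E^{cu}$ and Pesin theory give every $\mu$-regular point a non-trivial local strong-unstable leaf, placing it in some $\Gamma_{\ko}$ so that the dumbbell construction of Proposition~\ref{prop-goodcrosssec} applies verbatim (indeed, that proof is already written starting from a $\mu$-regular point).
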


\subsubsection{Dynamics in a S-adapted cross section}
From now on, one considers an $S$-adapted cross section $\S$. The first return map $F_{\S}$ has been defined above. 

 By definition,  $F_{\S}(W^{ss}_{\S}(x))\subset W^{ss}_{\S}(F_{\S}(x))$  holds because $\S$ is s-adapted. 
 Moreover, there is a canonical equivalent relation on $\S$, 
$x\sim y\iff y\in W^{ss}_{\S}(x)$. We recall that $\CT$ denotes the basis of the cross-section and $\Theta_{\CT}$ is the canonical projection onto the basis. 

As the cross section is $s$-adapted equality
$$\Theta_{\CT}\circ F_{\S}=\Theta_{\CT}\circ F_{\S}\circ\Theta_{\CT}$$
holds and defines a map $g_{\CT}:\CT\to\CT$ by $g_{\CT}:=\Theta_{\CT}\circ F_{\S}$. 

\begin{figure}[htbp]
\begin{center}
\includegraphics[scale=0.4]{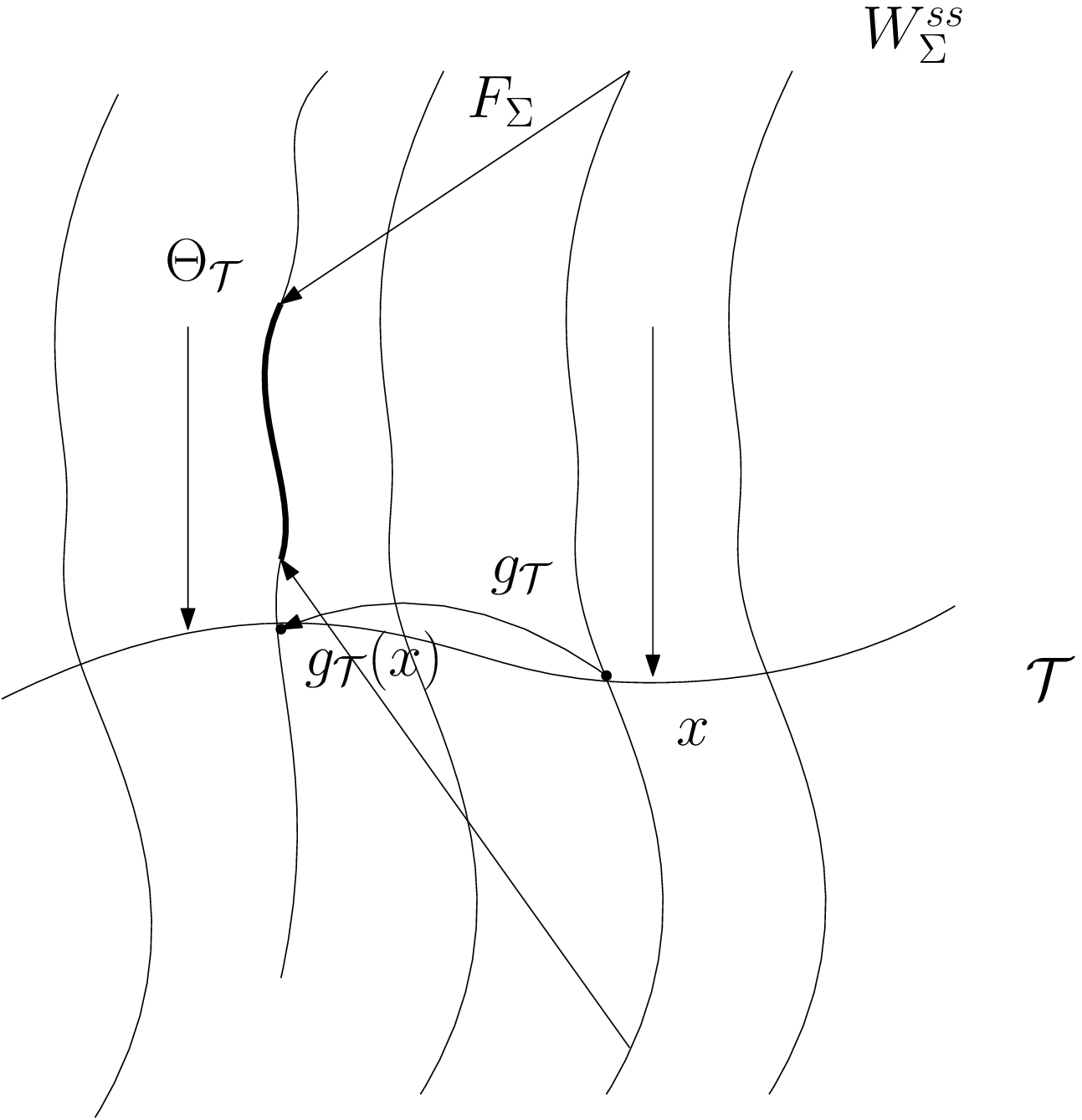}
\caption{Induced dynamics with skew product structure}
\label{Fif-mapsFSig}
\end{center}
\end{figure}

%

The next lemma states a kind of expansion in the trace of the unstable direction in $\S$. 

\begin{lemma}
\label{lem-expandeuus} There exists $C'$ such that for every $x$ in $\L\cap \S$ for which $F_{\S}(x)=f_{r_{\S}(x)}(x)$ is well-defined, 

\begin{equation}
\label{equ-expandeuus}
|Df_{r_{\S}(x)}(x)_{|X(x)^{\bot}}\wedge X(F_{\S}(x))|\ge C'.e^{r_{\S}(x).\l}
\end{equation}
holds, where $X(x)^{\bot}$ stands for the orthogonal direction to $X(x)$ in $E^{cu}(x)$. 
\end{lemma}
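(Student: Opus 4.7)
The plan is to derive the lemma as an almost immediate consequence of the \emph{sectional expansion} property of the singular hyperbolic splitting, using the flow direction as one of the two vectors spanning the plane on which the determinant is estimated.

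The key observation is that for every regular point $x\in\Lambda$ (in particular, for every $x\in\Sigma$, since cross-sections contain no singularities), the vector field $X(x)$ lies in $E^{cu}(x)$: indeed $X$ is $Df_t$-invariant, $E^{cu}$ is $Df_t$-invariant, and $X(x)$ cannot lie in the uniformly contracted bundle $E^{ss}(x)$ because orbits in $\Lambda$ are recurrent. Thus, if $\mathbf{e}$ denotes a unit vector in the 1-dimensional subspace $X(x)^{\bot}\subset E^{cu}(x)$, the pair $(X(x),\mathbf{e})$ is a basis of $E^{cu}(x)$ of two non-collinear vectors, and the sectional expansion hypothesis applies:
$$\bigl|\det Df_{r_{\Sigma}(x)}\bigr|_{\mathrm{span}\langle X(x),\mathbf{e}\rangle}\bigr|\;\ge\;C\,e^{\lambda\, r_{\Sigma}(x)}.$$
Writing this determinant as the ratio of wedge norms gives
$$\bigl|Df_{r_{\Sigma}(x)}(X(x))\wedge Df_{r_{\Sigma}(x)}(\mathbf{e})\bigr|\;\ge\;C\,e^{\lambda\, r_{\Sigma}(x)}\,\bigl|X(x)\wedge\mathbf{e}\bigr|.$$
Since $X(x)\perp\mathbf{e}$ and $|\mathbf{e}|=1$, one has $|X(x)\wedge\mathbf{e}|=|X(x)|$. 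Moreover, the flow-direction is preserved by its own differential: $Df_{r_{\Sigma}(x)}(X(x))=X(f_{r_{\Sigma}(x)}(x))=X(F_{\Sigma}(x))$. Substituting,
$$\bigl|X(F_{\Sigma}(x))\wedge Df_{r_{\Sigma}(x)}(\mathbf{e})\bigr|\;\ge\;C\,e^{\lambda\, r_{\Sigma}(x)}\,|X(x)|.$$

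To conclude, I would use that $\Sigma$ is compact and, by Definition \ref{def-transversal}, contains no singularity, so that $|X|$ is bounded away from zero on $\Sigma$: there exists $c_{0}>0$ with $|X(y)|\ge c_{0}$ for every $y\in\Sigma$. Setting $C':=C\,c_{0}$ then yields the claimed inequality. The only mild subtlety, which I expect to be straightforward rather than a real obstacle, is to justify that the direction $\mathbf{e}$ chosen above is the one implicitly meant by the notation $Df_{r_\Sigma(x)}(x)_{|X(x)^{\bot}}$ in the statement; this is forced by the fact that $X(x)^{\bot}\subset E^{cu}(x)$ is 1-dimensional, so the wedge $Df_{r_{\Sigma}(x)}(x)_{|X(x)^{\bot}}\wedge X(F_{\Sigma}(x))$ is well-defined up to the normalisation already absorbed in the constant $C'$.
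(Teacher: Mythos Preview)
Your proof is correct and follows essentially the same idea as the paper's: use that $Df_{r_{\Sigma}(x)}$ maps $X(x)$ to $X(F_{\Sigma}(x))$ and apply sectional expansion to the plane $E^{cu}(x)=\mathrm{span}\langle X(x),X(x)^{\bot}\rangle$. The paper's proof is only two lines; where you invoke compactness of $\Sigma$ to bound $|X(x)|$ away from zero, the paper instead remarks that $\Sigma$ is small so $X(x)$ and $X(F_{\Sigma}(x))$ are comparable in direction and norm --- but this is a cosmetic difference, and your version is in fact the more explicit of the two.
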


 \begin{proof}
$Df_{r_{\S}(x)}(x)$ maps $X(x)$ on $X(F_{\S}(x))$. Because the cross section $\S$ is small $X(x)\sim X(F_{\S})(x)$, both in direction and in norm. 

On the other hand $Df_{r_{\S}(x)}(x)$ expands by a factor larger than $C.e^{r_{\S}(x).\l}$  the  surface generated by $X(x)$ and $X(x)^{\bot}$. 
\end{proof}

%
%
%
%
%
\subsection{$(\S,u)$-curves}
The goal of this subsection is to determine the shape of the intersection of $\Lambda$ with a SACS $\S$.

\begin{definition}
\label{def-sucurveconti}
A $cu$-surface is a surface tangent to $E^{cu}$. 
A $(\S,u)$-curve in $\S$ is the intersection of a $cu$-surface with $\S$ satisfying:
\begin{enumerate}
\item it forms a connected graph $W^{cu}_{\S,loc}$ over an interval in the basis $\CT$,
\item for all $n\ge 0$, for all $x$ in $W^{cu}_{\S,loc}$, $F^{-n}_{\S}(x)$ is well defined,
\item for all $n\ge 0$, $F^{-n}_{\S}(W^{cu}_{\S,loc})$ is a connected graph over some interval of $\CT$. 
\end{enumerate}
\end{definition}

Next proposition show that $(\S,u)$-curves do exist. Actually, we expect  (see Proposition \ref{prop-descrilambda}) that any $(\S,u)$-curve is as described in the proposition. 

\begin{proposition}
\label{prop-sucurvesconti}
Let $\mu$ be a regular invariant measure such that $\Seps$ has positive $\mu$-measure. 
Let $x\in \S$ be $\mu$-regular. 

Then for any sufficiently small $\rho$, $\cup_{t\in[-\eps,\eps]}f_{t}(W^{uu}_{\rho}(x))\cap\S$ is a $(\S,u)$-curve which contains $x$ (see Fig. \ref{Fig-galeo}). 
\end{proposition}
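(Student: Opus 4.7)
The plan is to verify the three conditions of Definition \ref{def-sucurveconti} in order. Since $\mu$ is a regular invariant measure and $x$ is $\mu$-regular, $x$ is $\wt\la$-hyperbolic for some $\wt\la>0$, so $W^{uu}_{\rho}(x)$ is a non-trivial $C^{1}$-curve tangent to $E^{uu}(x)$ for all small $\rho$. The saturation $U:=\bigcup_{t\in[-\eps,\eps]}f_{t}(W^{uu}_{\rho}(x))$ is then a $C^{1}$ surface tangent to $E^{uu}\oplus\langle X\rangle=E^{cu}$, hence a $cu$-surface. Because $\S$ is transverse to $X$, the tangent plane $T_{x}\S$ does not contain $X(x)$; together with $T_{x}U=E^{uu}(x)\oplus\langle X(x)\rangle$ this gives transversality of $U$ and $\S$ at $x$, so their intersection $\gamma:=U\cap\S$ is a $C^{1}$ curve through $x$, as soon as $\rho$ and $\eps$ are small enough to avoid extraneous intersections of $U$ with $\S$.

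For property (1), I observe that $E^{ss}(x)\not\subset E^{cu}(x)=T_{x}U$, hence $T_{x}\gamma\neq E^{ss}(x)$, i.e.\ $\gamma$ is transverse to the stable foliation of $\S$. Consequently the projection $\Theta_{\CT}$ along stable leaves maps $\gamma$ homeomorphically onto an open subinterval of $\CT$ containing $\Theta_{\CT}(x)$, which is the desired connected graph $W^{cu}_{\S,loc}$.

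For properties (2) and (3), the key input is that $x$ is $\mu$-regular and $\S$ has positive* $\mu$-measure: by the special representation $d\mu\propto d\mu^{\S}\otimes dt$ on $\Seps$ and Poincar\'e recurrence of $\mu^{\S}$, $x$ admits an infinite sequence of backward returns to $\S$ at times $0>-\tau_{1}>-\tau_{2}>\cdots$ with $\tau_{n}\to\infty$. Writing every $y\in\gamma$ as $y=f_{s(y)}(\pi(y))$ with $\pi(y)\in W^{uu}_{\rho}(x)$ and $|s(y)|\le\eps$, the $\wt\la$-hyperbolicity at $x$ gives
$$d(f_{-t}(\pi(y)),f_{-t}(x))\le \ko\, e^{-\wt\la t}\, d(\pi(y),x)\ \longrightarrow\ 0.$$
Hence, for $\rho$ small, the backward orbit of $y$ stays in a thin tube around that of $x$; at each time $-\tau_{n}$ the point $f_{-\tau_{n}}(\pi(y))$ lies in $\Seps$ near $f_{-\tau_{n}}(x)\in\S$, and the $\eps$-identification $\io$ of Definition \ref{def-projsurS} provides a small flow adjustment producing an actual return of $y$ to $\S$. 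This yields (2). For (3), Lemma \ref{lem-expandeuus} says $F_{\S}$ expands the direction transverse to $W^{ss}_{\S}$, so $F_{\S}^{-1}$ contracts it; by induction on $n$ each $F_{\S}^{-n}(\gamma)$ is a (shorter) connected curve still transverse to $W^{ss}_{\S}$, and $\Theta_{\CT}$ again sends it homeomorphically onto an interval of $\CT$.

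The main obstacle is the uniformity needed in the third paragraph: one must fix $\rho$ and $\eps$ \emph{once for all} so that \emph{every} $y\in\gamma$ admits \emph{every} backward return $F_{\S}^{-n}(y)$, even though $\tau_{n}\to\infty$ and the flow orbits wander between returns. The exponential contraction of $W^{uu}$ in backward time resolves this: the tube around the backward orbit of $x$ shrinks with $n$, forcing all the returns of $y$ to shadow those of $x$ rather than drift into different Markov branches, which is exactly what makes the curve $\gamma$ qualify as a $(\S,u)$-curve.
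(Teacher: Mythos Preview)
Your strategy is close to the paper's, but there is a genuine gap in the passage from the shrinking-tube estimate to the claim that $f_{-\tau_{n}}(\pi(y))\in\Seps$ for every $n$ and every $y\in\gamma$. Being close to $f_{-\tau_{n}}(x)\in\S$ is not enough: if $f_{-\tau_{n}}(x)$ lies within distance $\ko\, e^{-\wt\la\,\tau_{n}}\rho$ of the \emph{stable boundary} $W^{ss}_{\S}(x_{\CT,+})\cup W^{ss}_{\S}(x_{\CT,-})$ of $\S$, then part of the tube falls outside $\Seps$ and the corresponding points of $\gamma$ miss $\S$ at that step. Your exponential shrinking controls the width of the tube but says nothing about how fast the backward orbit of $x$ itself may approach that boundary; the sentence ``the tube around the backward orbit of $x$ shrinks with $n$'' therefore does not by itself force the inclusion you need.

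The paper closes this gap in two moves. First, a Borel--Cantelli argument (using that $\mu$ is regular and $x$ is $\mu$-regular) produces a time $t_{0}$ such that for every $t>t_{0}$ the curve $f_{-t}(W^{uu}_{\rho}(x))$ avoids the stable boundary of $\S$. Second, for the finitely many early times $t\le t_{0}$ one shrinks $\rho$ to some $\rho'<\rho$ so that these finitely many images also avoid the boundary. Only after both steps can one conclude that at every backward return the image curve sits entirely inside $\S$ and is a connected graph over an interval of $\CT$, which is exactly condition~(3). Without this, your argument does not exclude the curve being cut by the stable boundary at some stage, which would break connectedness of $F_{\S}^{-n}(\gamma)$ and the well-definedness of later $F_{\S}^{-m}$ on all of $\gamma$.

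A minor side remark: $\S$ is only a H\"older surface (the stable foliation is H\"older, not $C^{1}$), so you should not assert that $\gamma=U\cap\S$ is $C^{1}$. Transversality of $E^{cu}$ and $E^{ss}$ still gives that $\gamma$ is a graph over an interval of $\CT$, which is all that Definition~\ref{def-sucurveconti} asks.
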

\begin{proof}
Let us denote by $x_{\CT,\pm}$ the two extremal points of $\CT$. The union of the two strong stable local leaves,  $W^{ss}_{\S}(x_{\CT,\pm})$ is referred to as  the stable boundary for $\S$. 

For small $\rho$, $W^{uu}_{\rho}(x)$ is well defined, due to Pesin theory. As $\mu$ is regular, and as $x$ is $\mu$-regular, a standard Borel-Cantelli argument shows that we may assume that  for any sufficiently big $t$, say $t> t_{0}$,$f_{-t}(W^{uu}_{\rho}(x))$ never intersects the stable boundary of $\S$. 

If this holds for some fixed $\rho$ and for any $t>t_{0}$, then, one can decrease and adjust $\rho'<\rho$ such that no $f_{-t}(W^{uu}_{\rho'}(x))$ intersects the stable boundary of $\S$ for $t\le t_{0}$. 

Doing like this, no $f_{-t}(W^{uu}_{\rho'}(x))$ intersects the stable boundary of $\S$ for any positive $t$. 

Moreover, $x$ has infinitely many negative return times by $F_{\S}$ because it is $\mu$-regular and $\Seps$ has positive $\mu$-measure. At any negative return time $-t$, $\cup_{t'\in[-t-\eps,-t+\eps]}f_{t'}W^{uu}_{\rho}(x)$ intersects $\S$ as a graph over $\CT$ (because it is transversal to $E^{ss}$) and does not intersect the stable boundary.
\end{proof}

\begin{remark}
\label{rem-Sucurve}
Note that Definition \ref{def-sucurveconti} also means/yields that $F_{\S}^{-n}$ of a $(\S,u)$-curve is also a $(\S,u)$-curve. 
$\blacksquare$\end{remark}

We denote by $\Lambda_{n,\Sigma}$ the set of all $(\Sigma,u)$-curves which are a graph over an interval of size  bigger than $1/n$ in $\CT$.

 \begin{proposition}
\label{prop-fakeunstablemani}
Assume that $\Lambda$ is a singular hyperbolic attractor of a three-dimensional vector field $X$. For any $S$-adapted cross-section $\Sigma$, 
\begin{enumerate}
\item $\bigcup_{n} \L_{n,\S}$ is dense in $\L\cap\S$,
\item $\bigcup_{n} \L_{n,\S}$ contains all the $F_{\S}$-periodic points,
\end{enumerate}
\end{proposition}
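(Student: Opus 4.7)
The plan is to establish part (2) first and then deduce (1) from Lemma~\ref{lem-perioddense}.

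For (2), let $p\in\L\cap\S$ be an $F_\S$-periodic point of $F_\S$-period $k$, and write $p_i:=F_\S^{-i}(p)$ for $i=0,\dots,k-1$. Each $p_i$ is a hyperbolic periodic point of the flow (the singular hyperbolic splitting forces $E^{cu}|_{p_i}$ to decompose into the flow direction plus a uniformly expanding direction), so $W^{uu}_{loc}(p)$ is defined with some positive size and contracts uniformly under the past flow. Condition~(5) of Definition~\ref{def-transversal} forbids any periodic point from sitting on the strong stable leaves of the extremities of $\CT$, and $W^{ss}$ being an equivalence relation gives the same statement after switching the two endpoints; hence the finite set $\{p_0,\dots,p_{k-1}\}$ lies at a uniform positive distance $d_0>0$ from the stable boundary $W^{ss}_\S(x_{\CT,-})\cup W^{ss}_\S(x_{\CT,+})$. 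For $\rho>0$ sufficiently small I would set $C_\rho:=\bigcup_{t\in[-\eps,\eps]}f_t(W^{uu}_\rho(p))\cap\S$: this is a connected arc through $p$, tangent to $E^{cu}$ and transverse to the stable foliation of $\S$, hence already a graph over a sub-interval of $\CT$.

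The core of the argument is an induction on $n\ge 0$ showing that $F_\S^{-n}(C_\rho)$ is well-defined and remains a graph over an interval of $\CT$. Uniform exponential contraction of $E^{uu}$ under $f_{-T_n}$ (with $T_n$ the cumulative backward return time) guarantees that $F_\S^{-n}(C_\rho)$ sits inside a flow-box of $E^{uu}$-size at most $O(\rho\, e^{-\l T_n})$ around $p_{n\bmod k}$. Taking $\rho$ initially small enough compared to $d_0$ and to the size of a neighborhood of each $p_i$ on which $F_\S^{-1}$ is defined, an easy induction yields that every $F_\S^{-n}(C_\rho)$ stays within $d_0/2$ of the finite periodic orbit. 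It therefore never meets the stable boundary, remains a connected arc transverse to $W^{ss}_\S$, and projects bijectively to a sub-interval of $\CT$. Thus $C_\rho$ is a $(\S,u)$-curve and belongs to $\L_{n,\S}$ for some $n$, proving (2). Part (1) then follows immediately: by Lemma~\ref{lem-perioddense}, $F_\S$-periodic points are dense in $\L\cap\S$, and by (2) each of them lies on some curve in $\bigcup_n\L_{n,\S}$.

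The main obstacle is the simultaneous inductive control of the pullbacks $F_\S^{-n}(C_\rho)$: one must check not only that $F_\S^{-1}$ stays defined on the whole shrinking arc (not only at $p_{n \bmod k}$) but also that the arc does not drift toward the stable boundary or develop folds that would spoil the graph property. The leverage is the combination of uniform exponential contraction of $W^{uu}$ along the hyperbolic periodic orbit with the strictly positive separation $d_0$ guaranteed by Definition~\ref{def-transversal}(5); together they allow a single small choice of $\rho$ to work for every $n\ge 0$.
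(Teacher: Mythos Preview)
Your argument is correct and follows essentially the same route as the paper: first establish (2) by showing that a small $(\S,u)$-curve through a periodic point survives all backward iterates of $F_\S$, using hyperbolicity of the periodic orbit together with condition~(5) of Definition~\ref{def-transversal} to keep the shrinking pullbacks away from the stable boundary, and then deduce (1) from Lemma~\ref{lem-perioddense}. The only cosmetic difference is that the paper packages the existence of $W^{uu}_{loc}(p)$ via the ergodic measure supported on the periodic orbit and Pesin theory, whereas you invoke the hyperbolic splitting along the periodic orbit directly; your inductive control of the pullbacks is also spelled out more quantitatively than in the paper, but the underlying mechanism is identical.
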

\begin{proof}
Note that $(2)\Rightarrow (1)$ because periodic points are dense (see Lemma \ref{lem-perioddense}). 

Let us prove point $(2)$. 
Let $x$ be a $\FS$-periodic point. Let $\mu$ be the $f_{1}$-invariant measure with support in the orbit of $x$, $\CO(x)$. It is an ergodic measure and it is non-singular because $\S$ does not contain singularities. Therefore it is a regular measure and thus ergodic. 

Because $x$ is periodic, the set $\{F^{n}_{\S}(x)\}$ has finite cardinality. The point $x$ is $\mu$-regular, $\mu$ is hyperbolic, hence one can construct a very small local unstable leaf $W^{uu}_{loc}(x)$ (possibly for $\kappa'_{0}\neq \ko$. One can adjust the size of this unstable local manifold such that for some small $\rho$, $\bigcup_{t\in[-\rho,\rho]}f_{t}(W^{uu}_{loc}(x))$ is a $cu$-surface included into $\Seps$.

The set of points of the form $F^{-n}_{\S}(x)$ is finite, and all of them are $\mu$-regular. Therefore they all admit some Pesin local unstable manifold, where the backward dynamics is contracting (due to sectional expansion in $E^{cu}$).  
Moreover, in our construction of $\S$, we assumed the extremal points of the basis are not in stable manifolds of periodic points. These two points yield that we can adjust the size of $W^{uu}_{loc}(x)$ and $\rho$ such that $\bigcup_{t\in[-\rho,\rho]}f_{t}(W^{uu}_{loc}(x))\cap \S$ is a graph  $W^{cu}_{\S}(x)$ over some small interval in $\CT$, 
for any $n$, $F^{-n}_{\S}(W^{uu}_{loc}(x))$ is well defined and is a graph over some interval of the basis. 

We have thus proved that  $W^{cu}_{\S}(x)$ is a $(\S,u)$-curve. 
\end{proof}
 
\begin{proposition}
\label{prop-descrilambda}
$\bigcup_{n} \L_{n,\S}$ is a collection of disjoint maximal $(\S, u)$-curves. 
\end{proposition}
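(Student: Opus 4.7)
The plan is to reduce the proposition to a local uniqueness lemma: if two $(\S,u)$-curves share a point, they coincide in a neighborhood of it. Once this is established, the family of $(\S,u)$-curves through a fixed point merges into a single maximal $(\S,u)$-curve, and distinct maximal curves are necessarily disjoint.

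For local uniqueness I would work in the skew-product coordinates $(t,s)$ on $\S$, where $t\in\CT$ is the basis coordinate and $s$ parameterizes the $W^{ss}_{\S}$-fiber. Any $(\S,u)$-curve is then a graph $s=\phi(t)$ over an interval $J\subset\CT$, and $F_\S$ takes the form $F_\S(t,s)=(g_\CT(t),h(t,s))$. By Lemma~\ref{lem-expandeuus}, $g_\CT$ is uniformly expanding on each branch; and the uniform strong-stable contraction built into the definition of singular hyperbolicity yields a constant $\mu<1$ such that $h(t,\cdot)$ is $\mu$-Lipschitz. In particular, the forward action of $F_\S$ contracts vertical $s$-differences by $\mu$. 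Now suppose $W_1,W_2$ are $(\S,u)$-curves through $x$, locally given by graphs $\phi_1,\phi_2$ over $J_1,J_2\ni t_0:=\Theta_\CT(x)$ with $\phi_1(t_0)=\phi_2(t_0)$. Condition~(3) of the definition says that for every $n$, $F_\S^{-n}(W_i)$ is a connected graph over some interval $K_n^{(i)}$ containing $t_n:=\Theta_\CT(F_\S^{-n}(x))$; since $F_\S^{-1}$ is single-valued, both curves use the same branch of $g_\CT^{-n}$, and $g_\CT^n$ restricts to a bijection $K_n^{(i)}\to J_i$. On $K_n:=K_n^{(1)}\cap K_n^{(2)}$ the two preimage graphs differ in $s$ by at most the $s$-diameter $\delta_{\max}$ of $\S$. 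Pushing forward $n$ times and using the $\mu$-contraction of $h$ in the vertical coordinate, we obtain $|\phi_1(t)-\phi_2(t)|\le\mu^n\delta_{\max}$ on $g_\CT^n(K_n)=J_1\cap J_2$. Letting $n\to\infty$ forces $\phi_1\equiv\phi_2$ on $J_1\cap J_2$.

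Given local uniqueness, the decomposition into maximal disjoint $(\S,u)$-curves follows by the usual merging argument. For each $x\in\bigcup_n\L_{n,\S}$ set $W(x)$ to be the union of all $(\S,u)$-curves containing $x$; local uniqueness guarantees they agree on overlaps, so $W(x)$ is a connected graph over the interval $\bigcup_\alpha J_\alpha \subset\CT$ (a union of intervals sharing $t_0$, hence an interval). The same uniqueness argument applied to the backward iterates, which all pass through $F_\S^{-n}(x)$, shows that $F_\S^{-n}(W(x))$ is again a connected graph, so conditions~(2) and~(3) persist and $W(x)$ itself is a $(\S,u)$-curve; it is maximal by construction. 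Two maximal curves that met at a common point could be merged into a strictly larger $(\S,u)$-curve, a contradiction, so the $W(x)$'s are pairwise disjoint.

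The main obstacle is the skew-product bookkeeping in the uniqueness step: one has to check carefully that the branches of $g_\CT^{-n}$ used to build $F_\S^{-n}(W_1)$ and $F_\S^{-n}(W_2)$ really do coincide (which they do, since $F_\S^{-1}$ is single-valued on $\S$ and both preimage graphs contain $y_n$), and that the pushed-forward interval $g_\CT^n(K_n)$ is exactly $J_1\cap J_2$ rather than a proper sub-interval shrinking with $n$. Extracting the uniform contraction constant $\mu<1$ in the vertical direction, independent of $t$, relies on the uniform rate of contraction on $E^{ss}$ together with compactness of $\S$; once these points are secured, the rest of the argument is a short combination of uniqueness and maximality.
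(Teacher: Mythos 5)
Your proof is correct and essentially the paper's: both rest on the hyperbolicity of the skew product $F_{\S}$ — uniform contraction along $W^{ss}_{\S}$-fibers together with the fact that the backward iterates of both curves are graphs passing through the common backward orbit of $x$ — to force two $(\S,u)$-curves through a point to agree on their common domain, and then conclude by maximality. The paper packages this as a contradiction (backward iterates of the two graphs shrink onto $F_{\S}^{-n}(x)$ by Lemma~\ref{lem-expandeuus}, while the two points over a splitting parameter must separate backward along their stable leaf), which is the dual formulation of your forward estimate $|\phi_1(t)-\phi_2(t)|\le \mu^{n}\delta_{\max}$.
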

\begin{proof}
For $x$ in $\bigcup_{n} \L_{n,\S}$, Zorn lemma shows that there exists a maximal $(\S, u)$-local unstable manifold which contains $x$. By definitions such a curve is a $(\S, u)$-curve. 
If $x$ belongs to two such maximal curves, then by definition of maximality none of the curve can be included into the other one and one must have at least one splitting (see Figure \ref{fig-split}): if each curve is the graph of a map , say, $G_{i}$, then one must find $y\in \CT$ such that $G_{1}(y)\neq G_{2}(y)$. 
\begin{figure}[htbp]
\begin{center}
\includegraphics[scale=0.5]{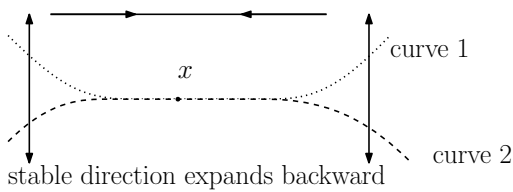}
\caption{Existence of splitting yielding contradiction}
\label{fig-split}
\end{center}
\end{figure}
If one consider the images by $F^{-1}_{\S}$, one these hand all these graphes are mapped into smaller and smaller graphes (due to Lemma \ref{lem-expandeuus}) and on the other hand, expansion in the stable direction yields that $F^{-n}_{\S}(y, G_{i}(y))$ must move away from each other. This is a contradiction.

 This shows that maximal $(\S,{u)}$-curves are disjoint.

\end{proof}

We finish this subsection with some technical lemma: 
\begin{lemma}
\label{lem-imagesucurve}
Let $W$ be a $(\S, u)$-curve. assume that $F_{\S}(W)$ is a piece of connected graph over some interval of the basis. Then, $F_{\S}(W)$ is also a $(\S, u)$-curve. 
\end{lemma}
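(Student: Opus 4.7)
My plan is to verify the three defining conditions of Definition \ref{def-sucurveconti} for $F_{\S}(W)$, using the fact that $W$ already satisfies them together with the $Df_t$-invariance of the center-unstable bundle.

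First, I would observe that $F_{\S}(W)$ is the intersection of some $cu$-surface with $\S$. Since $W$ is a $(\S,u)$-curve, it lies in a surface $S$ tangent to $E^{cu}$. Flowing $S$ forward along orbits for times up to the return time produces a new surface $S'$, which is still tangent to $E^{cu}$ because the splitting $E^{ss}\oplus E^{cu}$ is $Df_t$-invariant. The portion of $S'$ meeting $\S$ near the returns of points in $W$ coincides locally with $F_{\S}(W)$, so $F_{\S}(W)$ is indeed a piece of $cu$-surface sliced by $\S$. Condition (1) of Definition \ref{def-sucurveconti} is then exactly the hypothesis of the lemma, so nothing further is needed there.

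Next, to handle conditions (2) and (3), I would first record that the first-return map $F_{\S}$ is injective on its domain: if $F_{\S}(x_1)=F_{\S}(x_2)$, then $x_1$ and $x_2$ lie on the same flow orbit, and the minimality of the return time forces $r_{\S}(x_1)=r_{\S}(x_2)$, hence $x_1=x_2$. Using this, for any $x\in F_{\S}(W)$ there is a unique $y\in W$ with $F_{\S}(y)=x$, and therefore, for all $n\ge 1$,
\[
F^{-n}_{\S}(x)=F^{-(n-1)}_{\S}(y)\quad\text{and}\quad F^{-n}_{\S}(F_{\S}(W))=F^{-(n-1)}_{\S}(W).
\]
Because $W$ itself satisfies conditions (2) and (3) of Definition \ref{def-sucurveconti}, the right-hand sides are well-defined and are connected graphs over subintervals of $\CT$ for every $n\ge 1$. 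For $n=0$, $F^{0}_{\S}(F_{\S}(W))=F_{\S}(W)$ is a connected graph by assumption.

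I do not anticipate a genuine obstacle here: the lemma is essentially a book-keeping statement that transfers the $(\S,u)$-curve property forward under $F_{\S}$. The only point that requires a small argument is the injectivity of $F_{\S}$, which resolves any potential ambiguity in the branch of $F^{-1}_{\S}$ used to identify $F^{-n}_{\S}(F_{\S}(W))$ with $F^{-(n-1)}_{\S}(W)$; the rest is a direct unpacking of the definitions together with the $Df_t$-invariance of $E^{cu}$.
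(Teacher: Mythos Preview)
Your proposal is correct and follows essentially the same approach as the paper: construct a $cu$-surface whose trace on $\S$ is $F_{\S}(W)$ via the $Df_t$-invariance of $E^{cu}$, and inherit the backward conditions from $W$. The paper is a bit more explicit about the surface construction---it writes points of the ambient $cu$-surface as $f_t(y)$ with $y\in W$, extends $r_{\S}$ constantly in the flow direction, and uses that $r_{\S}|_W$ is continuous (this is where the connectedness hypothesis enters)---whereas you are more explicit about conditions (2) and (3) via the injectivity of $F_{\S}$, which the paper leaves implicit.
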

\begin{proof}
Let $\CF^{cu}$ be a piece of $cu$-surface such that $W=\CF^{cu}\cap \S$. Any point of $\CF^{cu}$ can be written in a unique form as $f_{t}(y)$, with $y\in W$ and $t\in [-\eps,\eps]$ for some small $\eps$. In that case we set 
$$r(x):=r_{\S}(y).$$

Then, we consider $f_{r(.)}(\CF^{cu})$. The map $r_{\S}$ restricted to $W$ is continuous (because the image is a graph over an interval of the basis), thus $f_{r(.)}(\CF^{cu})$ is a $cu$-surface. By construction, its intersection with $\S$ is $\FS(W)$. 
\end{proof}


 \subsection{SACS with the GALEO property}
GALEO stands for Good Angle and Locally Eventually Onto. 
The LEO property is a crucial property to have some mixing and expansion  for the one-dimensional dynamics $g_{\CT}$. In addition to this, we also need to control the angle that any return of a small piece of $\CT$ makes with $\S$. This angle  with respect to $E^{ss}$ at return is bounded away from zero because the splitting is dominated and continuous. Nevertheless, there is no control inside $E^{cu}$. This is why we need to introduce this Good Angle property. We remind that $X^{\bot}$ stand for the orthogonal direction to $X$ in $E^{cu}$. 

\begin{definition}
\label{def-galeo}
The SACS $\S$ satisfies the GALEO property {\color{red} for $\kappa_1=\kappa_1(\Sigma)$} if for any open interval $I\subset \CT$, there exists $t$ such that $f_{t}(I)$ contains a curve, say $J$, satisfying 
\begin{enumerate}
\item $J$ as slope bounded by $\kappa_{1}$ as a curve from $X^{\bot}$ to $X$.
\item $J$ is included in some neighborhood  $\Seps$.
\item $\TCT\circ \iota (J)=\CT$. 
\end{enumerate}

\end{definition}

\begin{proposition}
\label{prop-leo}
The SACS $\S$ can be constructed such that it satisfies the GALEO property for  some $\kappa_1=\kappa_1(\Sigma)$
\end{proposition}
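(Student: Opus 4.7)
My plan is to base the construction on Proposition \ref{prop-goodcrosssec}, choosing the base point $x_{0}$ to be regular with respect to an invariant measure with full support (for instance the SRB measure, which exists by \cite{APPV09}). I would then shrink the dumbbell construction so that the $E^{cu}$ bundle is nearly constant over $\S$. This gives two things ``for free'': the basis $\CT$ is nearly a segment of $W^{uu}$ through $x_{0}$, and any piece of $cu$-surface contained in $\Seps$ has tangent planes lying in a fixed narrow cone around $E^{uu}(x_{0})$.

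\textbf{Good Angle.} If $J \subset f_{t}(I) \cap \Seps$ is a candidate curve and $I \subset \CT$, then $J$ lives inside the $cu$-surface $f_{t}(\CF^{cu})$ where $\CF^{cu}$ is the $cu$-surface containing $I$ (as produced by Proposition \ref{prop-sucurvesconti}). Its tangent vectors lie in $E^{cu}$ at points of $\L$ in the $\eps$-tube around $\S$. By continuity of the splitting $E^{ss} \oplus E^{cu}$ on $\L$ and the smallness of $\S$, these tangent vectors lie in a small cone around $E^{uu}(x_{0})$. Since $X(x_{0})$ is transverse with angle $>\pi/4$, the slope of $J$ from $X^{\bot}$ to $X$ is bounded by some $\kappa_{1}=\kappa_{1}(\S)$. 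This is the Good Angle, and it holds uniformly for every such $J$.

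\textbf{LEO.} Given an open $I \subset \CT$, I would pick a $\mu$-regular point $x$ with $\Theta_{\CT}(x) \in I$ for a regular measure $\mu$ with full support. Proposition \ref{prop-sucurvesconti} yields a $(\S,u)$-curve $W$ through $x$; let $W_{I}$ be the sub-arc of $W$ projecting into $I$. By Lemma \ref{lem-expandeuus}, the forward iterate $F_{\S}^{n}(W_{I})$ expands exponentially in the $X^{\bot}$ direction with rate $\geq C' e^{\lambda n}$. Although these iterates can be cut by the stable boundary $W^{ss}_{\S}(x_{\CT,\pm})$ of $\S$ into several connected graphs over subintervals of $\CT$, bounded distortion for the induced one-dimensional dynamics $\gct$ on $\CT$ (inherited from the uniform expansion of Lemma \ref{lem-expandeuus} together with H\"older regularity of $E^{cu}$) together with transitivity of the flow, force at least one connected component of $F_{\S}^{n}(W_{I})$ to project onto the full basis $\CT$ for some $n$. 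Unrolling $F_{\S}^{n}$ as the first $n$ returns to $\S$, this component corresponds to a curve inside $f_{t}(I) \cap \Seps$ for the appropriate total return time $t$, which is the desired $J$.

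\textbf{Main obstacle.} The delicate point is ensuring that \emph{one single} connected piece eventually spans all of $\CT$, rather than a collection of short pieces whose union covers $\CT$. The strategy is a contradiction argument: if no iterate $F_{\S}^{n}$ ever produced a component projecting onto $\CT$, then the forward orbit of $\gct$ restricted to $I$ would be trapped in a proper closed $\gct$-invariant subset of $\CT$. Invariance under an expanding map on an interval forces this trapping set to have nonempty interior, yielding a closed flow-invariant region in $\L$ that is not dense--contradicting transitivity or the full support of $\mu$. The boundary condition from Proposition \ref{prop-goodcrosssec} (extremities of $\CT$ not lying on stable leaves of periodic points) is precisely what is needed to rule out degenerate obstructions coming from the boundary. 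I expect this contradiction step, coupled with keeping track of cuts at the stable boundary, to be where most of the technical effort concentrates.
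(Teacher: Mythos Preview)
Your Good Angle argument has a genuine gap. You claim that continuity of the splitting $E^{ss}\oplus E^{cu}$ on a small $\S$ forces the tangent vectors of $J\subset f_{t}(I)$ to lie in a narrow cone around $E^{uu}(x_{0})=X(x_{0})^{\bot}$. But continuity of the bundle only pins down the \emph{two-dimensional} plane $E^{cu}$; it says nothing about which direction \emph{inside} $E^{cu}$ the tangent of $J$ picks out. The curve $J=f_{t}(I')$ has tangent $Df_{t}\cdot X^{\bot}$, and $Df_{t}$ can shear arbitrarily within $E^{cu}$, rotating $X^{\bot}$ toward $X$ with no a~priori bound. This shearing is precisely the obstruction the GA condition is designed to rule out, and is why the paper remarks just above Definition~\ref{def-eligible curve type 2} that a $(\S,u)$-curve need not be of the form $\iota(W^{uu}_{loc})$. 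The paper's proof does not get GA from continuity; it chooses $x_{0}$ as a density point for a set $\CA$ of $\mu_{SRB}$-generic points whose long unstable leaves make a fixed angle with $X$, and then uses Poincar\'e recurrence of points of $W$ to $\CA$ to select return times $t$ at which the angle is controlled. You need some such recurrence mechanism; the bundle continuity alone cannot do it.

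Your LEO argument is also incomplete, and differs from the paper's in a way that matters. The implication ``no single connected component of $F_{\S}^{n}(W_{I})$ ever projects onto $\CT$ $\Rightarrow$ the forward $g_{\CT}$-orbit of $I$ is trapped in a proper closed invariant set'' does not follow: the union of components could perfectly well be dense in $\CT$ while each individual component stays short. Your subsequent claim that a proper closed forward-invariant set for an expanding interval map must have nonempty interior is also false (periodic orbits and invariant Cantor sets are counterexamples). The paper avoids this entirely by building the basis $\CT$ on a small piece of $W^{uu}_{loc}(Q)$ for a \emph{periodic} point $Q$ chosen near $x_{0}$; once some forward image of $I$ crosses $W^{ss}_{\S}(Q)$, the $\lambda$-lemma forces subsequent iterates to accumulate on $W^{uu}(Q)=\CT$, which gives the full covering directly. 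Introducing the periodic anchor $Q$ is the missing structural ingredient in your plan.
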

\begin{proof}
First, we consider some $\mu_{SRB}$-generic point $x_{0}$. We assume it has a ``long'' unstable local leaf and that it is an accumulation point for points having long unstable leaves with the same angle (up to some very small variations) with respect to the flow direction $X$. We denote by $\CA$ this set of points. 
Note that if $x$ is such a point then $f_{t}(x)$ is also such a point. 
All the points we may consider here are $\mu_{SRB}$-generic, thus we may assume that Lebesgue almost all the points in their local unstable leaves also are  $\mu_{SRB}$-generic. As this property is invariant along orbits, 
this yields a collection, say $\CB$, of $E^{cu}$ surfaces, foliated by local unstable leaves and flow trajectories, for $t\in[-\eps,\eps]$ and for which Lebesgue almost every point is $\mus$-generic. 

Then, very close to $x_{0}$ we pick a periodic point, say $Q$. It is regular with respect to the regular measure supported by this periodic orbits, thus $Q$ admits a piece of local unstable leaf. We only consider a very small piece of this local unstable leaf, and construct the SACS with this basis as it is done in the proof of Prop. \ref{prop-goodcrosssec}. Note that $\CT$ has to be small compared to the ``long'' unstable leaves of $\mu_{SRB}$-generic points we have just considered above. 

\medskip
We now show that this SACS has the GALEO property. For that we consider a small interval $I\subset \CT$. Note that Lebesgue almost every point in $I$ belongs to the stable leaf of $x\in\CA$. Consequently, this means that $I$ can be considered as the projection on $\CT$ of the identification  (the map $\iota$) of a piece of some  true local unstable leaf $W$ 
inside some $E^{cu}$-surface in $\CB$. 

\begin{figure}[htbp]
\begin{center}
\includegraphics[scale=0.5]{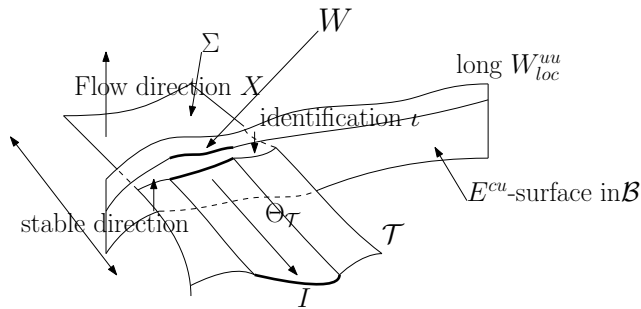}
\caption{Construction of a SACS with the GALEO property}
\label{Fig-galeo}
\end{center}
\end{figure}

Lebesgue almost every point in $W$ belongs to $\CA$ and will return infinitely many often in $\CA$.This yields that for infinitely many $t$'s, $f_{t}(I)$ is a curve  in $\Seps$ with fixed angle with respect to the flow direction $X$. Any upper bound for the angle defines $\kappa_{1}$.

The sectional expansion shows that the sizes of these images must increase (exponentially with respect to the return times). This shows that the images of $I$ by $F_{\S}$ will eventually cross $W^{ss}_{\S}(Q)$. At that moment we have small interval $I'=:f_{t}(J)$ for some small interval $J\subset I$ crossing $W^{ss}_{\S}(Q)$. Then, the $\l$-lemma shows that the forward images $f_{t+nt_{0}}(J)$ (if $t_{0}$ is the period for $Q$) must accumulate on $W^{uu}_{Q}$, thus for some iterate, $g_{\CT}^{n}(J)=\CT$.  
This prove the LEO  part for the GALEO property. 

\medskip
To prove the  GA part, we only have to consider returns of $W$ in $\CA$.  

 \end{proof}

\begin{corollary}
\label{coro-leo}
Any point $x\in \CT$ has a dense set of preimages for $g_{\CT}$. 
\end{corollary}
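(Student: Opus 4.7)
The plan is to invoke directly the LEO part of the GALEO property established in Proposition \ref{prop-leo}. Fix $x \in \CT$ and an arbitrary non-empty open interval $I \subset \CT$; it suffices to produce a preimage of $x$ under some iterate of $g_{\CT}$ that lies in $I$.

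First, I apply Proposition \ref{prop-leo} to $I$, obtaining a flow-time $t > 0$ and a curve $J \subset f_{t}(I)$ satisfying $\TCT \circ \iota(J) = \CT$, with $J$ contained in the neighborhood $\Seps$ and with bounded slope with respect to $X^{\bot}$. Since $\TCT \circ \iota(J) = \CT$, in particular there exists $z \in J$ with $\TCT(\iota(z)) = x$, i.e., $\iota(z) \in W^{ss}_{\S}(x)$. Pulling $z$ back by the flow gives a point $y \in I$ with $f_{t}(y) = z$; shifting by the identification $\iota$ produces a time $s \in [-\eps,\eps]$ such that $f_{t+s}(y) = \iota(z) \in \S$ and $f_{t+s}(y) \in W^{ss}_{\S}(x)$.

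Next, I translate this into the discrete dynamics. Since $f_{t+s}(y) \in \S$ and no return to $\S$ prior to this one was skipped (up to the standard fact that the first-return map's iterates enumerate successive passages through $\S$), there is an integer $n \ge 1$ such that $F_{\S}^{n}(y) = f_{t+s}(y)$. Applying $\TCT$ and using $\TCT \circ F_{\S} = g_{\CT} \circ \TCT$ repeatedly, together with $y \in \CT$ so $\TCT(y) = y$, I obtain
\[
g_{\CT}^{n}(y) \;=\; \TCT(F_{\S}^{n}(y)) \;=\; \TCT(f_{t+s}(y)) \;=\; x,
\]
because $f_{t+s}(y)$ and $x$ share the same strong-stable fiber in $\S$. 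Hence $y \in I \cap g_{\CT}^{-n}(\{x\})$, and since $I$ was arbitrary, $\bigcup_{n\ge 1} g_{\CT}^{-n}(\{x\})$ is dense in $\CT$.

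The only real subtlety, and the step I would verify most carefully, is the bookkeeping between the flow time $t+s$ and the integer $n$ of returns to $\S$: one must check that every return of $y$ to $\S$ prior to time $t+s$ actually lies in the domain of $g_{\CT}$ (so that $g_{\CT}^{n}$ is defined on $y$). This is ensured because the GALEO curve $J$ is built from forward flow-pieces whose intermediate passages through $\S$ are themselves return points, so all intermediate applications of $F_{\S}$ are well-defined; the rest is a direct unfolding of the definition of $g_{\CT}$ as $\TCT \circ F_{\S}$ on the $s$-adapted cross section.
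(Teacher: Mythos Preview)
Your proof is correct and follows the same route the paper has in mind: the corollary is an immediate consequence of the LEO part of the GALEO property, and you have simply spelled out the passage from the flow formulation $\TCT\circ\iota(J)=\CT$ to the discrete statement $g_{\CT}^{n}(y)=x$. The bookkeeping you flagged is harmless, since return times to a cross-section are bounded below and $F_{\S}$ enumerates all successive returns; hence the hit $f_{t+s}(y)\in\S$ is necessarily $F_{\S}^{n}(y)$ for some finite $n$.
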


As $\CT$ is compact, we also have:
\begin{corollary}
\label{cor-periodense1dim}
Periodic points are dense for $g_{\CT}$.
\end{corollary}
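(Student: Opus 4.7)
The plan is to reduce density of $\gct$-periodic orbits in $\CT$ to the already-established density of $\FS$-periodic orbits in $\S$ (Lemma \ref{lem-perioddense}), using the semi-conjugacy between $(\S,\FS)$ and $(\CT,\gct)$ induced by the projection $\TCT$.

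First I would record the key intertwining relation. Because $\S$ is $s$-adapted, $\FS$ maps each local strong stable leaf $W^{ss}_\S(x)$ into $W^{ss}_\S(\FS(x))$, so $\TCT(\FS(x))$ depends only on $\TCT(x)$; equivalently, $\gct \circ \TCT = \TCT \circ \FS$ wherever both sides are defined, which is exactly the identity used to define $\gct$. Iterating gives $\gct^n \circ \TCT = \TCT \circ \FS^n$. In particular, every $\FS$-periodic point $p$ with $\FS^n(p)=p$ projects to a $\gct$-periodic point $\TCT(p)$ whose period divides $n$.

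Next, given any nonempty open $U \subset \CT$, I would form the saturation $V := \TCT^{-1}(U)$. Since $\TCT$ is continuous, $V$ is open in $\S$; since $\TCT$ restricts to the identity on $\CT \subset \S$, one has $U \subset V$, so $V$ is nonempty. Lemma \ref{lem-perioddense} then furnishes an $\FS$-periodic point $p \in V$, and $\TCT(p)\in U$ is $\gct$-periodic by the previous paragraph. As $U$ was arbitrary, this proves density.

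I do not foresee a real obstacle. The only delicate point is the semi-conjugacy, which rests squarely on the $s$-adaptedness of $\S$; once this is in hand the argument is a one-line transfer from $\S$ to $\CT$. One could alternatively derive the corollary directly from the LEO part of the GALEO property via an intermediate-value argument on the one-dimensional curve $\CT$ (using $\gct^n(U) \supset \CT \supset U$ for some $n$ to produce a fixed point of $\gct^n$ in $U$), and compactness of $\CT$ enters there through the IVT on a closed subinterval; however, that route demands care with the possible discontinuities of $\gct$ coming from jumps of the return time $\rs$, whereas the projection approach sidesteps the issue entirely.
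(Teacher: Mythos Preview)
Your projection argument via the semi-conjugacy $\gct\circ\TCT=\TCT\circ\FS$ and Lemma~\ref{lem-perioddense} is correct: $\TCT$ is continuous (the strong stable foliation is H\"older), $\TCT^{-1}(U)$ is a nonempty open set in $\S$ containing $U$, and any $\FS$-periodic point in it projects to a $\gct$-periodic point in $U$. This is a clean transfer and, as you note, it does not even require the GALEO property---it works for any SACS.

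The paper, however, takes the other route you sketch at the end. Corollary~\ref{cor-periodense1dim} is placed immediately after Corollary~\ref{coro-leo} (dense preimages for $\gct$, a consequence of LEO), and the only justification given is the phrase ``As $\CT$ is compact''. The intended argument is the one-dimensional fixed-point mechanism: for any subinterval $I\subset\CT$, the LEO property yields an $n$ with $\gct^{n}(I)\supset\CT$, hence a contracting inverse branch of $\gct^{n}$ sends the compact curve $\CT$ into $I$ and therefore has a fixed point in $I$. Your approach buys you independence from GALEO and sidesteps the branch/discontinuity bookkeeping for $\gct$; the paper's approach stays entirely in the one-dimensional picture and uses only the expansion encoded in LEO together with compactness. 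Both are valid, and it is worth noting that you anticipated the paper's line of reasoning in your final paragraph.
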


\section{Construction of a Mille-Feuilles}\label{sec-millefeuilles}
 \subsection{First step. A rectangle with Markov property}

We consider a SACS $\S$  with the GALEO property. We look at the first return map $F_{\S}$ into $\S$. We re-employ notations from above,  $F_{\S}(x)=f_{r_{\S}(x)}(x)$. 

The constant $\kappa_{1}$ involved in the GALEO property is thus fixed. 

\subsubsection{Vertical band and eligible curves of type 1}

Lemma \ref{lem-perioddense} immediately yields:

\begin{definition}
\label{def-strip}
A strip in $\S$ is a set $\Theta_{\CT}^{-1}[p_{-},p_{+}]$ in $\S$ where $p_{\pm}$ are two consecutive (for the natural order relation)  points of some $g_{\CT}$ periodic orbit. We call interior of the strip the set $\Theta_{\CT}^{-1}]p_{-},p_{+}[$. 
\end{definition}

In the rest of the paper, one shall talk about strips defined by periodic points $p_{\pm}$ or equivalently of the strip over $[p_{-},p_{+}]$.

\begin{definition}
\label{def-eligible curve type 1}
 Let $\CB$ be the strip over $[p_{-},p_{+}]$ in $\S$.  An eligible curve of type 1 in the strip is 
 the intersection of the strip with a maximal $(\S, u)$-curve crossing over the strip $\CB$. 
\end{definition}

Let $\CF^{u}$ be an eligible curve of type 1 in the strip $\CB$ over $[p_{-},p_{+}]$.
By definition of maximal $(\S,u)$-curve and by definition of the strip, we emphasize that for every $n$, $F_{\S}^{-n}(\CF^{u})$ is a piece of $(\S,u)$-curve.

As $p_{\pm}$ are two consecutive points of a periodic orbit, this yields that $F_{\S}^{-n}(\CF^{u})$ is either totally outside the strip $\CB$ or totally contained in it. Otherwise, there would be some image of $p_{+}$ or $p_{-}$ (by $g_{\CT}$) between these two points.

Furthermore, for any point $x$ in $\CF^{u}$, one can  define the sequence of backward return times, that is 
$$F_{\S}^{-n}(x):=f_{-r^{-n}(x)}(x).$$

\subsubsection{Eligible curve of type 2}
We emphasize (and remind) that  a $(\S,u)$-curve cannot necessarily be written under the form $\iota(W^{uu}_{loc})$. 
This justify the next definition.

\begin{definition}
\label{def-eligible curve type 2}
Let $\kappa_{2}>\kappa_{1}$ be fixed. 
 Let $\CB$ be the strip over $[p_{-},p_{+}]$ in $\S$.  An eligible curve of type 2 in the strip is an eligible curve of type 1, say $\CF^{u}$ such that 
 \begin{enumerate}
\item there exists a local $\ko$-unstable manifold $W^{uu}_{loc}(x)$ contained in a small neighborhood $\Seps$ such that $\iota(W^{uu}_{loc}(x))=\CF^{u}$,
\item As a graph from $\S$ to $<X>$, $W^{uu}_{loc}(x)$ has a slope bounded by $\kappa_{2}$. 
\end{enumerate}
\end{definition}

We emphasize that if the basis of the SACS is  obtained from a periodic point (as in the construction of Prop. \ref{prop-leo})
then, the segment $[p_{-},p_{+}]$ (of the basis) is an eligible curve of type 2 because it is a true piece of strong unstable leaf and it is inside $\S$ thus a slope equal to 0.

\begin{definition}
\label{def-millefeuilles}
 Let $\CB$ be the strip over $[p_{-},p_{+}]$ in $\S$.  
 The pre-mille-feuilles associated to $\CB$ is the union $\CP\CM$ of eligible curves of type 1.

We set $W^{ss}_{loc}(x):=W^{ss}_{\S}(x)\cap\CP\CM$ and $W^{uu}_{\CP\CM}(x):=\CF^{uu}(x)\cap\CP\CM$, where $\CF^{uu}(x)$ is the eligible curve of type 1 which contains $x$. 

We call interior $\grosinte{\CP\CM}$ of the pre-mille-feuilles $\CP\CM$ the set $\CP\CM\cap\Theta_{\CT}^{-1}]p_{-},p_{+}[$.

\end{definition}

\begin{remark}
\label{rem-increask2}
We emphasize that the bigger $\kappa_{2}$ is the fatter the pre-mille-feuilles is. The same holds if $\kappa_{0}$ increases. 
$\blacksquare$\end{remark}


\begin{lemma}
\label{lem-millecomp}
Any pre-mille-feuilles is compact.
\end{lemma}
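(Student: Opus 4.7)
The plan is to show that the pre-mille-feuilles $\CP\CM$, viewed as a subset of the strip $\CB := \Theta_\CT^{-1}[p_{-},p_{+}]$, is closed. Since $\CB$ is the preimage of a closed interval under the continuous projection $\Theta_\CT$ and lives inside the bounded cross-section $\S$, it is compact; hence closedness of $\CP\CM$ will give the claim.

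First I would take a sequence $x_n\in\CP\CM$ with $x_n\to x\in\CB$ and let $\CF^u_n$ be an eligible curve of type $1$ containing $x_n$. By definition each $\CF^u_n$ is a connected graph $\gamma_n:[p_-,p_+]\to\S$ over the basis, tangent to the continuous $1$-dimensional direction field $\CL:=T\S\cap E^{cu}$, which is well defined on $\S$ since $\S$ is transverse to $X$. Because $\CL$ has uniformly bounded slope relative to $T\CT$, the family $\{\gamma_n\}$ is equi-Lipschitz, and the Arzel\`a--Ascoli theorem yields a subsequence $\gamma_{n_k}$ converging uniformly to a Lipschitz graph $\gamma:[p_-,p_+]\to\S$ passing through $x$.

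Next I would verify that $\gamma$ is the restriction to $\CB$ of a maximal $(\S,u)$-curve. Since $\gamma$ is a uniform limit of $\CL$-tangent curves and $\CL$ is continuous, $\gamma$ is itself tangent to $\CL$ everywhere; saturating $\gamma$ by the flow on a time interval $[-\eps,\eps]$ therefore produces a $cu$-surface whose trace on $\S$ is $\gamma$. For the backward iterates, Lemma \ref{lem-expandeuus} shows that $F_\S^{-1}$ contracts uniformly in the $\CL$-direction on a neighborhood of $\gamma$; consequently each $F_\S^{-n}(\gamma_{n_k})$ is a connected graph over a subinterval of $\CT$, the family stays equi-Lipschitz, and a second application of Arzel\`a--Ascoli yields $F_\S^{-n}(\gamma_{n_k})\to F_\S^{-n}(\gamma)$ as a connected graph. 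Thus $\gamma$ fulfils all three items of Definition \ref{def-sucurveconti}; a Zorn argument then embeds $\gamma$ into a maximal $(\S,u)$-curve whose restriction to $\CB$ is an eligible curve of type $1$ passing through $x$, so $x\in\CP\CM$.

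The main obstacle will be ensuring that $F_\S^{-n}$ is genuinely defined and continuous at every point of the limit graph $\gamma$: \emph{a priori} some points of $\gamma$ could have backward orbits that graze the stable boundary of $\S$ or approach the singularity of $X$, which would wreck both the domain of $F_\S^{-n}$ and the graph property at the limit. The construction of $\S$, where the extremities of $\CT$ are chosen outside stable manifolds of periodic points, combined with the uniform contraction of $F_\S^{-1}$ in the $\CL$-direction from Lemma \ref{lem-expandeuus}, should confine all iterates $F_\S^{-n}(\gamma_{n_k})$ to a fixed compact subset of $\S$ on which $F_\S^{-n}$ is continuous, making the limiting procedure legitimate.
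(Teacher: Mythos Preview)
Your approach is essentially the same as the paper's: prove closedness of $\CP\CM$ inside the compact strip by taking a sequence on eligible curves, exploiting the uniform Lipschitz bound on graphs coming from the transversality of $E^{cu}$ and $E^{ss}$, extracting a limit graph (the paper does this implicitly; you invoke Arzel\`a--Ascoli), and using continuity of $E^{cu}$ to conclude the limit is again tangent to the correct direction field.

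You are in fact more careful than the paper on one point. The paper's proof simply asserts ``By continuity of the splitting $E^{cu}$, $\CF^{u}$ is a $(\S,u)$-curve'', which only addresses item~(1) of Definition~\ref{def-sucurveconti}; it says nothing about items~(2) and~(3), namely that all backward iterates $F_\S^{-n}$ are defined on the limit curve and yield connected graphs. You explicitly flag this and sketch an argument via a second Arzel\`a--Ascoli for each $n$. Your final paragraph, however, is somewhat speculative: the appeal to the choice of extremities of $\CT$ and to Lemma~\ref{lem-expandeuus} is on the right track but not a complete argument. A cleaner way to close the gap is to observe that for each fixed $n$ the backward return times along the $\gamma_{n_k}$ are locally constant (hence uniformly bounded on a subsequence), so $F_\S^{-n}$ restricted to a neighbourhood of $\gamma$ is just $f_{-t}$ for a bounded $t$ followed by the identification $\iota$; this is continuous, and the images $F_\S^{-n}(\gamma_{n_k})$ are equi-Lipschitz graphs over shrinking intervals, whence the limit is again a connected graph. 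With that adjustment your argument is complete and slightly more detailed than the paper's.
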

\begin{proof}
By construction, any eligible curve is closed thus compact. 
Let us now consider a family of eligible curves  accumulating themselves on some curve $\CF^{u}$. 
All the eligible curves are graph of Lipschitz maps over $\I$ with bounded slope. This holds because they are all restrictions to $\S$ of a $cu$-surface and $E^{cu}$ is transversal to $E^{ss}$. Therefore, the limit curve  $\CF^{u}$ is also a graph of a Lipschitz map over $\I$. By continuity of the splitting $E^{cu}$, $\CF^{u}$ is a $(\S,u)$-curve. It is thus an eligible curve of type 1. 
\end{proof}

A pre-mille-feuilles has a canonical product structure: if $x$ and $y$ are in $\CP\CM$ then,  the eligible curve of type 1 containing $x$  is transversal to $W^{ss}_{loc}(y)$ and they intersect themselves at a unique point, say $z$. We set 
\begin{equation}
\label{eq-crochet}
z:=[x,y].
\end{equation}

\begin{lemma}
\label{lem-markovreturn}
A pre-mille-feuilles has Markov return map: 
If $x\in\grosinte{\CP\CM}$ and $F_{\S}^{k}(x)\in\grosinte{\CP\CM}$, then 
\begin{enumerate}
\item $F_{\S}^{k}(W^{ss}_{\CP\CM}(x))\subset W^{ss}_{\CP\CM}(F_{\S}^{k}(x))$.
\item $F_{\S}^{-k}(W^{uu}_{\CP\CM}(F_{\S}^{k}(x)))\subset W^{uu}_{\CP\CM}(x)$. 
\end{enumerate}
\end{lemma}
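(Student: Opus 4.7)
Both inclusions express the standard Markov property of the induced map $F_\S$ on $\CP\CM$. The plan is to prove (2) first, as the main content, and then (1) by a parallel argument combined with $s$-adaptedness. The three key ingredients are: (i) the skew-product identity $\Theta_\CT\circ F_\S^k = g_\CT^k\circ\Theta_\CT$ coming from $s$-adaptedness of $\S$; (ii) the disjointness of distinct maximal $(\S,u)$-curves (Proposition \ref{prop-descrilambda}); and (iii) the crucial observation that since $p_-,p_+$ are two \emph{consecutive} points on a single $g_\CT$-periodic orbit, the forward $g_\CT$-orbit of either point meets $[p_-,p_+]$ only at $p_-$ and $p_+$ themselves.

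\textbf{Proof plan for (2).} Let $W^u$ denote the (unique) maximal $(\S,u)$-curve through $F_\S^k(x)$, so $W^{uu}_{\CP\CM}(F_\S^k(x))=W^u\cap\CB$. By Definition \ref{def-sucurveconti}, $F_\S^{-k}(W^u)$ is itself a $(\S,u)$-curve, now through $x$, and by (ii) it is contained in the unique maximal $(\S,u)$-curve through $x$. It therefore suffices to show $F_\S^{-k}(W^u\cap\CB)\subset\CB$. Suppose not; then the $\Theta_\CT$-projection of $F_\S^{-k}(W^u\cap\CB)$, a connected interval containing $\Theta_\CT(x)\in\,]p_-,p_+[$, must cross one of $p_\pm$, say $p_+$. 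Pick an \emph{interior} point $y^*\in F_\S^{-k}(W^u\cap\CB)$ with $\Theta_\CT(y^*)=p_+$. By (i) one has $\Theta_\CT(F_\S^k(y^*))=g_\CT^k(p_+)$; this must lie in $[p_-,p_+]$ since $F_\S^k(y^*)\in W^u\cap\CB$, and by (iii) therefore equals $p_-$ or $p_+$. Hence $F_\S^k(y^*)$ lies on the boundary of the eligible curve $W^u\cap\CB$, contradicting that $y^*$, and thus $F_\S^k(y^*)$, is interior.

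\textbf{Proof plan for (1).} $s$-adaptedness gives $F_\S^k(W^{ss}_\S(x))\subset W^{ss}_\S(F_\S^k(x))$, so for $y\in W^{ss}_{\CP\CM}(x)$ it only remains to show $F_\S^k(y)\in\CP\CM$. Parametrise the eligible curve $\CF^{uu}(y)$ through $y$ by $q\in[p_-,p_+]\mapsto y_q:=\CF^{uu}(y)\cap W^{ss}_\S(q)$, and let $I$ be the connected component of $\{q\in[p_-,p_+]:g_\CT^k(q)\in[p_-,p_+]\}$ containing $\Theta_\CT(y)$. By (i), $I$ is a closed sub-interval with $\Theta_\CT(y)$ in its interior; by (iii), $g_\CT^k(\partial I)\subset\{p_-,p_+\}$. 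Since the return time $r_\S$ is continuous on $\S$ wherever it is finite, $F_\S^k$ is continuous along $\{y_q:q\in I\}$, so iterating Lemma \ref{lem-imagesucurve} shows that $F_\S^k(\{y_q:q\in I\})$ is a $(\S,u)$-curve inside $\CB$ with $\Theta_\CT$-projection exactly $[p_-,p_+]$ and endpoints on $W^{ss}_\S(p_\pm)$. By (ii) this piece is contained in the maximal $(\S,u)$-curve through $F_\S^k(y)$, whose projection therefore contains $[p_-,p_+]$; so this maximal curve crosses the strip, giving an eligible $u$-curve of type 1 through $F_\S^k(y)$.

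\textbf{Main obstacle.} The delicate point throughout is the endpoint dichotomy in (iii): it is what forces the backward image in (2) to remain in $\CB$, and in (1) what pins the endpoints of the constructed $(\S,u)$-curve onto $W^{ss}_\S(p_\pm)$ rather than somewhere in the interior of the strip. Getting (1) rigorous requires checking that the forward image of $\{y_q:q\in I\}$ is genuinely a connected graph (not broken by return-time jumps); this reduces, once more, to (iii), since a discontinuity would correspond to some intermediate iterate $F_\S^j(y_q)$ meeting $W^{ss}_\S(p_\pm)$, which (iii) forbids while $g_\CT^j(q)$ stays in the open interval $]p_-,p_+[$.
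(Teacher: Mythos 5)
Your proposal is correct and follows essentially the same route as the paper's proof: the "totally inside or totally outside" dichotomy coming from $p_\pm$ being consecutive points of a $g_{\CT}$-periodic orbit, disjointness of maximal $(\S,u)$-curves for item (2), and for item (1) the pull-back of the strip boundary to a sub-band (your interval $I$ is the paper's $\CB'=\TCT^{-1}[\TCT(z_-),\TCT(z_+)]$) whose eligible curves are pushed forward onto eligible curves of type 1 via Lemma \ref{lem-imagesucurve}. The connectedness issue you flag as the main obstacle is indeed the point the paper passes over most quickly, and your treatment is at least as careful as the original.
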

\begin{proof}
$\bullet$ Let $k$ be a return time for $x$. 
Then, $W^{uu}_{\CP\CM}(\FS^{k}(x))$ crosses over the band $\CB$ and there are two intersection points with the boundary, say $y_{\pm}$. One can consider the preimages of these two points (because they belong to a $(\S,u)$-curve) and we set 
$$z_{\pm}:=\FS^{-k}(y_{\pm}).$$
We consider the band $\CB':=\TCT^{-1}[\TCT(z_{-}),\TCT(z_{+})]$. 
By construction this is a band strictly inside $\CB$ and then all the eligible curves in $\CP\CM$ cross over $\CB'$.

$\bullet$ We can consider the images by the flow of all these eligible curves restricted to $\CB'$. By construction, their images by $\FS^{k}$ are curves which do cross over the band $\CB$. By Lemma \ref{lem-imagesucurve}, they are $(\S,u)$-curves.

In other words, these images curves are pieces of maximal $(\S,u)$-curves and do cross-over $\CB$. They are eligible curves of type 1.

%
This concludes the proof of the lemma. 
\end{proof}

\subsection{The mille-feuilles}
One of the problem of the pre-mille-feuilles is that first returns are not necessarily with good dynamical properties. In particular we will need to control the distorsion on return times: if $x$ and $y$ belong to $[p_{-},p_{+}]$, if $x'$ and $y'$ are respective preimages on the same inverse branch, then we shall need to control $r_{\S}(x)-r_{\S}(y)$. This is possible if we can control le slope of the image of the basis associated to the inverse branch (see below). This is why we need to remove horizontal strips in the pre-mille-feuilles.

\subsubsection{Step one. Construction of a mille-feuilles from a Pre-mille-feuilles}
Let us consider a pre-mille-feuille $\CP\CM$ with transversal $\CT$. 

The return map defines vertical bands and horizontal strips. Each vertical band, say $\CB'$, is mapped onto an horizontal strip $\CH$ which crosses over $\CB$ by the return map $\FS^{k}$. 

Hypothetically, we have countably many of them. We denote them by $(b_{i})$ and $(h_{i})$. 

\begin{lemma}
\label{lem-horoemptyinter}
Two different horizontal bands are disjoint. 
\end{lemma}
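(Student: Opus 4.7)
My plan is to argue by contradiction, using the definition of horizontal strips as images of vertical bands under the first return to $\CP\CM$, combined with the fact that $\FS$ is a local diffeomorphism along each orbit it is defined on. Recall that by the Markov construction preceding the lemma, each vertical band $b_i$ is a maximal ``cylinder'' of points in $\CP\CM$ whose first return time to the pre-mille-feuilles equals a common integer $k_i$ (meaning the smallest $k\ge 1$ with $\FS^k(x)\in\CP\CM$), and the associated horizontal strip is $h_i=\FS^{k_i}(b_i)$.

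First I would suppose, for the sake of contradiction, that $h_i\cap h_j\neq\emptyset$ for some $i\neq j$, and pick a witness point $y\in h_i\cap h_j$. By construction there exist $x_i\in b_i$ and $x_j\in b_j$ with $\FS^{k_i}(x_i)=y=\FS^{k_j}(x_j)$. Without loss of generality $k_i\le k_j$, and applying the inverse branch of $\FS^{k_i}$ along this orbit (which is well-defined since $\FS$ is invertible where it is defined as a first-return map of a flow) we obtain
$$x_i=\FS^{k_j-k_i}(x_j).$$
In the main case $k_i<k_j$, this says that $x_j$ has an orbit point in $\CP\CM$, namely $x_i\in b_i\subset\CP\CM$, at time $k_j-k_i$ strictly smaller than $k_j$, contradicting the choice of $k_j$ as the \emph{first} return time of $x_j$ to $\CP\CM$. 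In the case $k_i=k_j$, injectivity of $\FS^{k_i}$ forces $x_i=x_j$, so the common point lies in $b_i\cap b_j$; since the vertical bands were produced (as in the proof of Lemma \ref{lem-markovreturn}) from preimages of the stable boundaries of $\CB$ and therefore form a partition of $\CP\CM$ by their interiors, this forces $b_i=b_j$, hence $h_i=h_j$, contradicting $i\neq j$.

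The only subtlety I expect is the treatment of boundaries: two bands $b_i$ and $b_j$ may a priori share stable boundaries and their images may share the corresponding stable boundaries in $\CP\CM$, so ``disjoint'' should be understood in the sense of the Markov partition, i.e.\ for the topological interiors. With that convention the argument above is clean, and the only structural inputs needed are the invertibility of $\FS$ along each orbit and the minimality built into the definition of $k_i$.
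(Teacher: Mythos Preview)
Your argument is correct but follows a different path from the paper. The paper's proof is geometric: it uses the $s$-adapted property of $\S$ to show that the image $\FS^{k}(\CB')=\CH$ of any vertical band is flanked above and below by two ``empty'' horizontal bands with no intersection with $\Lambda$ (these are the forward images of the $\Lambda$-free dumbbell zones at the extremities of the stable leaves in $\CB'$). Since every horizontal strip is made of eligible curves, hence lies in $\Lambda$, these $\Lambda$-free buffers isolate each $\CH$ from all the others.

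Your route is purely dynamical: you exploit that the $k_i$'s are \emph{first} return times to $\CP\CM$ together with injectivity of $\FS$, and derive a contradiction from any common point. This is cleaner and more elementary, needing nothing about the $s$-adapted construction or the structure of $\Lambda$ along stable leaves. What the paper's argument buys in exchange is extra geometric information: the strips are not merely disjoint but are separated by genuine gaps, which is exactly the ``totally disconnected in the vertical direction'' structure invoked immediately after the lemma and used in building the Cantor set $\Gamma$ for the mille-feuilles. Your proof gives disjointness (of interiors, as you correctly note) but not this separation; for the bare statement of the lemma that is enough.
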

\begin{proof}
See picture \ref{pic-band-cantor}. 

Let $x$ be in $\grosinte{\CP\CM}$ and $k$ be such that $F_{\S}^{k}(x)\in\grosinte{\CP\CM}$. Let $\CB'$ be the vertical band as above. By construction $\FS^{k}(\CB')$ is an horizontal strip $\CH$ which crosses over $\CB$. More precisely, as $\S$ is a $s$-adapted, there exists two horizontal bands with empty intersection with $\L$ which separate $\CH\cap\L$ from the rest of $\L$ . 

\begin{figure}[htbp]
\begin{center}
\includegraphics[scale=0.5]{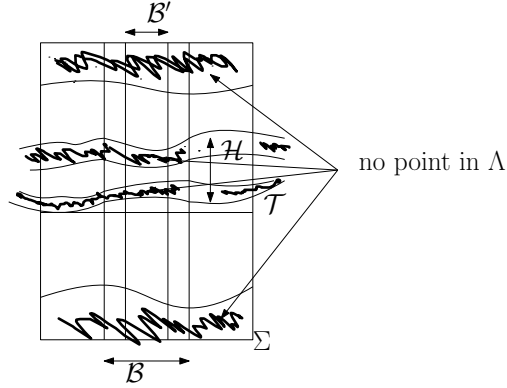}
\caption{Strip with empty intersection with $\L$}
\label{pic-band-cantor}
\end{center}
\end{figure}
\end{proof}

Roughly speaking, in the vertical direction, $\CP\CM$ has a totally disconnected structure and there are two ``empty'' strips which separate $\CH$ from the rest of $\CP\CM$. 

If $b_{i}$ is a vertical band with return $k$, by construction the image of the transversal $\FS^{k}(\CT\cap b_{i})$ is an eligible curve of type 1 in $\CP\CM$ (see Lemma \ref{lem-imagesucurve}). 

\begin{definition}
\label{def-codealphabeta}
Let $\kappa_{2}>\kappa_{1}$ be fixed. 
We say that $b_{i}$ and $h_{i}$ are of type $\al$ if $\FS^{k}(\CT\cap b_{i})$ is an eligible curve of type 2. 
Otherwise we say it is of type $\be$. 

The mille-feuilles associated to $\CP\CM$ and $\kappa_{2}$ is the collection of eligible curves of type 1 included in the horizontal strips of type $\al$ (see Fig. \ref{Fig-mille-feuilleconstruct}). 
\end{definition}

\begin{figure}[htbp]
\begin{center}
\includegraphics[scale=0.5]{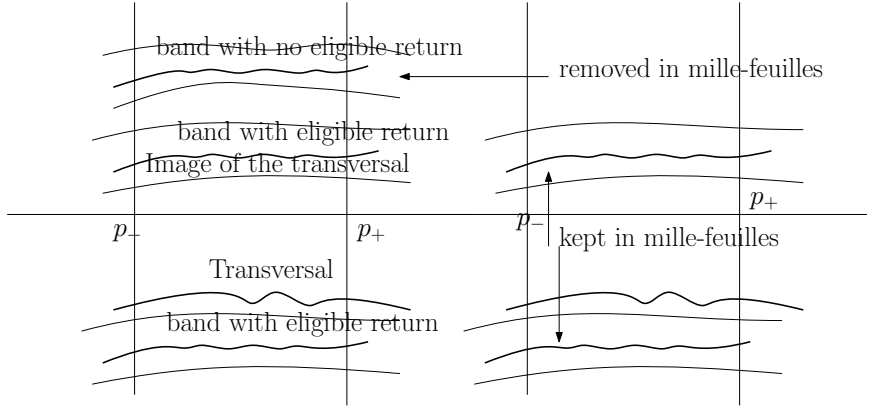}
\caption{Remove of keep bands with respect to eligible returns}
\label{Fig-mille-feuilleconstruct}
\end{center}
\end{figure}

By construction, $\CM$ can be written as the image $\Phi_{\CM}([-1,1]\times\Gamma)$, where $[-1,1]$ is identified to the horizontal $[p_{-},p_{+}]$ and $\Gamma$ is the Cantor set giving the eligible curves in $\CM$. Each $\CM_w:=\Phi_{\CM}([-1,1],w)$ is one eligible curve of $\CM$. They will be referred to as horizontals. Verticals are pieces of strong stable leaves. 

In other words, the mille-feuilles $\CM$ has a rectangle structure. Because entire blocks of returns (the bands $b_{i}$) have been removed it is still compact. 

\subsubsection{Step 2. First return $F_{\CM}$ is  Markovian}
The mille-feuilles has been defined from the pre-mille-feuilles. Even if the horizontal strips of type $\be$ have been removed, the associated vertical bands are still well defined in $\CM$.

We denote by $F_{\CM}$ the first return map in $\CM$. 
\begin{lemma}
\label{lem-fcm}
The first return map $F_{\CM}$ corresponds to the first return into a horizontal strip of type $\al$ by iterations of $\FS$. 
\end{lemma}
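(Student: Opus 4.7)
The plan is to derive the lemma directly from the definitions, leveraging the Markov structure of the pre-mille-feuilles (Lemma \ref{lem-markovreturn}) and the invariance of $(\S,u)$-curves under the first return (Lemma \ref{lem-imagesucurve}). Since $\CM\subset\CP\CM\subset\S$, every positive flow return of $x\in\CM$ to $\CM$ is in particular a flow return to $\S$, so $F_{\CM}(x)=\FS^{k^{*}}(x)$ where $k^{*}$ is the minimal $k\ge 1$ with $\FS^{k}(x)\in\CM$. It therefore suffices to prove the equivalence, for any $k\ge 1$,
\[
\FS^{k}(x)\in\CM \iff \FS^{k}(x)\ \text{lies in a horizontal strip}\ h_{i}\ \text{of type}\ \alpha,
\]
after which minimality transfers at once.

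For the ``only if'' direction, a point of $\CM$ lies by Definition \ref{def-codealphabeta} on an eligible curve of type $1$ contained in a horizontal strip of type $\alpha$, so the implication is immediate. For ``if'', let $\CF^{u}_{x}$ denote the eligible curve of type $1$ through $x$ and let $b_{i}$ be the vertical band containing $x$ whose associated return time equals $k$. Lemma \ref{lem-markovreturn} implies that $\FS^{k}(\CF^{u}_{x}\cap b_{i})$ is contained in the eligible curve of $\CP\CM$ through $\FS^{k}(x)$, while Lemma \ref{lem-imagesucurve} ensures that this image is itself a piece of $(\S,u)$-curve that crosses entirely over $\CB$, hence an eligible curve of type $1$. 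Since $h_{i}=\FS^{k}(b_{i})$ is by hypothesis a horizontal strip of type $\alpha$, this eligible curve sits inside a type $\alpha$ strip and therefore belongs to $\CM$, giving $\FS^{k}(x)\in\CM$.

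The only delicate point is the ``if'' direction, where one has to match the pointwise condition ``$\FS^{k}(x)$ lies in a type $\alpha$ strip'' with the curvewise definition of $\CM$. Once the two cited lemmas are invoked, the translation is mechanical, and the minimality of $k^{*}$ immediately identifies $F_{\CM}(x)$ with the first iterate $\FS^{k}(x)$ landing in a horizontal strip of type $\alpha$, which is exactly the content of Lemma \ref{lem-fcm}.
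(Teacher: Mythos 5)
Your argument is correct and is essentially the paper's own (very terse) proof: both rest on the observation that $\CM$ is exactly the part of $\CP\CM$ lying in the horizontal strips of type $\al$, so that returns to $\CM$ are precisely the $\FS$-returns landing in a type $\al$ strip. You merely flesh out, via Lemmas \ref{lem-markovreturn} and \ref{lem-imagesucurve}, the ``if'' direction that the paper leaves implicit, namely that a return image landing in a type $\al$ strip does lie on an eligible curve of type 1 contained in that strip and hence in $\CM$.
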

\begin{proof}
Because $\CM\subset\CP\CM$ any return in $\CM$ is a return in $\CP\CM$. As $\CM$ is the restriction of $\CP\CM$  of horizontal strips of type $\al$, the lemma holds. 
\end{proof}
This means that if $x$ eventually has a return  into a band of type $\al$, and if $x$ belongs to $b_{i}$ of type $\be$ with return $k_{1}$ and is such that $\FS^{k_{1}}(x)$ belongs to a vertical band  $b_{j}$ of type $\be$ and so on up to the $(n+2)^{th}$ return: 
$$x\in b_{i_{0}}\text{ type }\be{\longrightarrow}_{\FS^{k_{1}}}b_{i_{1}}\text{ type }\be{\longrightarrow}_{\FS^{k_{2}}}\ldots \longrightarrow b_{i_{n}}\text{ type }\be{\longrightarrow}_{\FS^{k_{n+1}}}h_{i_{n}}\cap b_{i_{n+1}}\text{ type }\al \text{ return }k_{n+2},$$
then,  $F_{\CM}(x):=\FS^{k_{1}+\ldots +k_{n+2}}(x)$. It is well-defined on 
$$b_{i_{0}}\cap \FS^{-k_{1}}(((b_{i_{1}}\cap \FS^{-k_{2}}((b_{i_{2}})\ldots \FS^{-k_{n+1}}(b_{i_{n+1}}))).$$

Doing like this, we define a countable collection of bands that are mapped on strips by $F_{\CM}$. All these bands define a countable collection of intervals $(I_{n})$ on $[p_{-},p_{+}]$. 
Moreover, intervals $I_{n}$ have disjoint interiors.

Again, for $x\in \CM$, we set $W^{uu}_{\CM}(x)=W^{uu}_{\CP\CM}(x)$ and $W^{ss}_{\CM}(x)=W^{ss}_{loc}(x)\cap\CM$. 
\begin{proposition}
\label{prop-Markov return}
The map $F_{\CM}$ is Markov: 
\begin{enumerate}
\item $F_{\CM}(W^{ss}_{\CM}(x))\subset W^{ss}_{\CM}(F_{\CM}(x))$.
\item $F_{\CM}^{-1}(W^{uu}_{\CM}(F_{\CM}(x)))\subset W^{uu}_{\CM}(x)$. 
\end{enumerate}
\end{proposition}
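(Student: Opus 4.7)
The plan is to deduce this proposition as an iterated consequence of Lemma \ref{lem-markovreturn} (which already establishes the Markov return property for the pre-mille-feuilles $\CP\CM$), combined with the description of $F_{\CM}$ as a finite concatenation of returns to $\CP\CM$ given just before the proposition and confirmed by Lemma \ref{lem-fcm}.

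First I would write $F_{\CM}(x) = \FS^{k_1+\ldots+k_{n+2}}(x)$, with intermediate iterates $y_j := \FS^{k_1+\ldots+k_j}(x) \in \CP\CM$ for $j=1,\ldots,n+2$: by construction $y_j$ lies in a $\beta$-type strip for $j \leq n+1$, and $y_{n+2} = F_{\CM}(x)$ lies in an $\alpha$-type strip, hence in $\CM$. For item (1), I would induct on $j$: starting from $W^{ss}_{\CM}(x) \subset W^{ss}_{\CP\CM}(x)$, each step of Lemma \ref{lem-markovreturn}(1) propagates $\FS^{k_1+\ldots+k_j}(W^{ss}_{\CM}(x)) \subset W^{ss}_{\CP\CM}(y_j)$. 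The key point is that every vertical band $b_i$ is $\TCT$-saturated — it is, by definition of the partition $(I_n)$ of $\I$, the $\TCT$-preimage of an interval in the basis — and stable fibers are exactly the fibers of $\TCT$. Consequently the whole leaf $W^{ss}_{\CM}(x)$ has the same itinerary $b_{i_0} \to b_{i_1} \to \ldots \to b_{i_{n+1}}$ as $x$, lands at the iterate $k_1 + \ldots + k_{n+2}$ in the same $\alpha$-type strip as $F_{\CM}(x)$, and is therefore contained in $\CM$. Intersecting with $W^{ss}_{loc}(F_{\CM}(x))$, which it is mapped into because the flow preserves strong stable leaves, gives $F_{\CM}(W^{ss}_{\CM}(x)) \subset W^{ss}_{\CM}(F_{\CM}(x))$.

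For item (2), I would pull back along the same chain. Using $W^{uu}_{\CM}(z) = W^{uu}_{\CP\CM}(z)$ for $z \in \CM$, Lemma \ref{lem-markovreturn}(2) applied to each elementary return yields
\[
\FS^{-k_j}\bigl(W^{uu}_{\CP\CM}(y_j)\bigr) \subset W^{uu}_{\CP\CM}(y_{j-1}),
\]
and iterating backwards from $j = n+2$ down to $j = 1$ produces $F_{\CM}^{-1}(W^{uu}_{\CM}(F_{\CM}(x))) \subset W^{uu}_{\CP\CM}(x) = W^{uu}_{\CM}(x)$. No further refinement is needed here, since $W^{uu}_{\CM}$ and $W^{uu}_{\CP\CM}$ coincide pointwise on $\CM$ and the pullbacks automatically remain inside $\CP\CM$ by Lemma \ref{lem-markovreturn}.

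The main obstacle is the bookkeeping in item (1): one must confirm that an entire stable leaf of $\CM$ returns to $\CM$ at exactly the same $F_{\CM}$-iterate as $x$, rather than at some other $\alpha$-type strip or not at all. This is precisely the synchronization one gets from the fact that the bands $b_i$ are $\TCT$-saturated and stable fibers are $\TCT$-fibers, so the itinerary is constant on each $W^{ss}_{\CM}$-leaf. Once that observation is in place, both items are immediate inductive consequences of Lemma \ref{lem-markovreturn}.
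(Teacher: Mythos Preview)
Your proof is correct and follows the same overall strategy as the paper: both items are deduced from Lemma~\ref{lem-markovreturn} applied along the chain of $\FS$-returns that constitute one $F_{\CM}$-step. For item~(2) your iterated pullback is exactly what the paper means by ``just follows from Lemma~\ref{lem-markovreturn}.'' For item~(1) you spell out the synchronization argument (constant itinerary on a $W^{ss}$-leaf because vertical bands are $\TCT$-saturated) more carefully than the paper, which simply asserts that $F_{\CM}(W^{ss}_{\CM}(x))$ lands in an $\alpha$-type strip and observes that in such a strip no points were removed when passing from $\CP\CM$ to $\CM$; these are two phrasings of the same mechanism.
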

\begin{proof}
Property $(2)$ just follows from Lemma \ref{lem-markovreturn}. Property $(1)$ is less obvious because points have been removed from $\CP\CM$ thus also from the images. 

The set $W^{ss}_{\CM}(x)$ is obtained from $W^{ss}_{\CP\CM}(x)$ by removing intervals corresponding to horizontal strips of type $\be$. This means that there are less points in $W^{ss}_{\CM}(x)$ than in $W^{ss}_{\CP\CM}(x)$. Now, $F_{\CM}(W^{ss}_{\CM}(x))$ is inside an horizontal strip of type $\al$ (in $\CP\CM$ or in $\CM$), where we have not removed points as we built $\CM$ from $\CP\CM$. This shows that $(1)$ holds because it holds for $W^{ss}_{\CP\CM}(x)$. 
\end{proof}

This yields that  $F_{\CM}$ is a skew product. A point of $\CM$ can be represented as $z=\Phi_{\CM}(x,y)$ with $(x,y)\in [-1,1]\times\Gamma$. Then, $F_{\CM}(z)=:(g_{\CM}(\TCT(x)), G_{\CM}(z))$. 
Inverse branches  for $g_{\CM}$ are well-defined.


\subsubsection{Step 3. Return times are Dynamically H\"older}

\begin{definition}
\label{def-dawei-dynahold}
A function $\varphi:\CM\to{\mathbb R}$ is said to be dynamically H\"older if
it is constant along fibers $\TCT^{-1}(\{x\})$ and there exist $\kappa$ and $\ga$ such that for every $x$ and $y$ in the same $n$-cylinder
$$\left|\sum_{k=0}^{n-1}\varphi\circ g_{\CM}^{k}(x)-\varphi\circ \gm^{k}(y)\right|\le \kappa.|g_{\CM}^{n}(x)-\gm^{n}(y)|^{\ga}.$$
\end{definition}

\begin{proposition}
\label{prop-returndynlip}
If we set $F_{\CM}(x)=f_{r_{\CM}(x)}(x)$, then the return-time map $r_{\CM}$ is Dynamically H\"olders. 
\end{proposition}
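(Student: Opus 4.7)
The plan is to split the statement into two pieces: (i) constancy of $r_\CM$ along strong stable fibers, and (ii) a dynamical Hölder estimate for the sums $\sum_{k=0}^{n-1} r_\CM \circ g_\CM^k$. For (i), I would argue as follows. If $x, y$ belong to the same fiber $W^{ss}_\CM(z) = W^{ss}_\S(z) \cap \CM$, then $x$ and $y$ lie on one and the same strong-stable leaf of the flow. The $s$-adaptedness of $\S$ ensures that $f_{r_\S(x)}(W^{ss}_\S(z))$ is again a strong stable leaf sitting inside $\S$; in particular $f_{r_\S(x)}(y) \in \S$, forcing $r_\S(y) \le r_\S(x)$, and by symmetry equality. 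Iterating this over the $N_i$ consecutive $F_\S$-returns composing one $F_\CM$-step (and using the Markov property from Proposition \ref{prop-Markov return}) gives $r_\CM(x) = r_\CM(y)$, so $r_\CM$ descends to a well-defined function on $\CT$.

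For (ii), the backbone is a band-wise Lipschitz estimate: there exists a constant $C_0 > 0$ such that for every vertical band $b_i$ (a 1-cylinder of $F_\CM$) and all $x, y \in b_i$,
\[
|r_\CM(x) - r_\CM(y)| \le C_0\, |g_\CM(\Theta_\CT(x)) - g_\CM(\Theta_\CT(y))|.
\]
To obtain this I would write $r_\CM = \sum_{j=0}^{N_i - 1} r_\S \circ F_\S^j$ on $b_i$ and differentiate along the $cu$-direction. The hitting time $r_\S$ of the smooth transversal $\S$ under a smooth flow is differentiable in the interior of its domain, with gradient bounded up to flow-geometric constants by $1/|\langle X, n_\S\rangle|$ at the hit point. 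Differentiating the composition and inserting the sectional expansion bound of Lemma \ref{lem-expandeuus} at each step produces a geometric series in $e^{-\lambda r_\S(F_\S^j(\cdot))}$ whose total sum is dominated by a single constant $C_0$ not depending on $i$.

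With the band-wise bound in hand, the Hölder estimate follows by a standard telescoping argument. If $x, y$ are in the same $n$-cylinder, then $g_\CM^k(x)$ and $g_\CM^k(y)$ lie in the same 1-cylinder for each $k < n$, so
\[
|r_\CM(g_\CM^k(x)) - r_\CM(g_\CM^k(y))| \le C_0\, |g_\CM^{k+1}(x) - g_\CM^{k+1}(y)| \le C_0 \Lambda^{-(n-k-1)}\, |g_\CM^n(x) - g_\CM^n(y)|,
\]
where $\Lambda > 1$ is a uniform expansion constant for a sufficiently high iterate of $g_\CM$, obtained from Lemma \ref{lem-expandeuus} combined with the positive lower bound on return times forced by $\S$ being compactly disjoint from the singular set. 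Summing the geometric series over $k$ yields the dynamical Hölder property with exponent $\gamma = 1$.

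The hard part will be establishing the band-independent constant $C_0$. Near the singular set the hitting times $r_\S$ and their $cu$-derivatives both blow up, and the uniformity of $C_0$ rests on these two divergences cancelling against the sectional expansion. Making this cancellation quantitative requires the flow-box analysis near singularities that already underpins the GALEO construction of Section \ref{sec-cross}.
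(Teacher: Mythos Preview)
Your inductive/telescoping step (ii) matches the paper's argument, but your base case --- the band-wise estimate on a single $1$-cylinder --- has a genuine gap, and it misses exactly the mechanism that the mille-feuilles construction was designed to supply.

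First, the cross-section $\S$ is \emph{not} a smooth transversal. It is built as $\bigcup_{y\in\CT} W^{ss}_{loc}(y)$, and the strong stable lamination is only H\"older continuous (see \cite{AraujoMelbourne19}). Consequently the hitting-time function $r_\S$ restricted to $\CT$ is a priori only H\"older, not differentiable, and your computation of $\nabla r_\S$ via $1/|\langle X,n_\S\rangle|$ does not make sense. This is also why your conclusion $\gamma=1$ is too strong: the paper obtains a genuine H\"older exponent $\gamma<1$ coming from the stable holonomy, and in the generality of the paper one cannot do better.

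Second, and more importantly, writing $r_\CM=\sum_{j=0}^{N_i-1} r_\S\circ F_\S^j$ and trying to bound each summand does not use the one piece of structure that makes the estimate work. The whole point of passing from the pre-mille-feuilles $\CP\CM$ to $\CM$ was to discard the horizontal strips of type $\beta$ --- precisely those returns for which the image $F_\S^k(\CT\cap b_i)$ has \emph{uncontrolled} slope relative to the flow direction. The intermediate $F_\S$-returns composing one $F_\CM$-step may all be of type $\beta$, so there is no a priori bound on the slope of $F_\S^{j}(\CT)$ for $0<j<N_i$, and hence no bound on the corresponding variation of $r_\S$. Sectional expansion (Lemma~\ref{lem-expandeuus}) controls two-dimensional area in $E^{cu}$, not the angle the image curve makes with $X$, so it does not by itself provide the cancellation you are hoping for.

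The paper's argument bypasses all of this by working directly with the \emph{final} return $F_\CM$. By construction (Definition~\ref{def-codealphabeta}) the image $F_\CM(\CT\cap C_1)$ is an eligible curve of type~2: it equals $\iota(\CW^u)$ for a true piece of strong unstable leaf $\CW^u$ with slope bounded by the fixed constant $\kappa_2$ as a graph from $\S$ to $\langle X\rangle$. For $x,y$ in the same $1$-cylinder, $f_{r_\CM(x)}(x)$ and $f_{r_\CM(x)}(y)$ lie on this $\CW^u$; the difference $|r_\CM(x)-r_\CM(y)|$ is then just the flow-direction displacement along $\CW^u$, which is bounded by $\kappa_2$ times the $\CW^u$-distance, and hence (via the H\"older comparison of Observation~\ref{obs-deltadistoelig2}) by $\kappa\,|g_\CM(x)-g_\CM(y)|^\gamma$. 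That is the uniform $1$-cylinder estimate; the rest is the same induction you wrote down.
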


The proof is postpone for later. We first need to introduce some more vocabulary.

Remind that there is an ``horizontal'' reference segment $[p_{-},p_{+}]$. $F_{\CM}$ is the first-return map in $\CM$ and we have 
$$F_{\CM}(x):=\FS^{\tau_{\CM}(x)}(x).$$
Furthermore, we set $F_{\CM}(x)=f_{r_{\CM}(x)}(x)$. By construction, $g_{\CM}:=\Theta_{\CT}\circ \FM=\TCT\circ F_{\CM}$. 

The map $g_{\CM}$ may be not well defined everywhere on $[p_{-},p_{+}]$ and it can also be multi-valued on some points because we focused on point returning infinitely many times into the pre-mille-feuilles and eventually returning in horizontal strips of type $\al$. 

\begin{definition}
\label{def-cylindersG1}
A generation 1 cylinder $C_{1}$ in $\I$ is an interval $[a,b]$ (in $\CF$) such that for some point $x\in]a,b[$, $F_{\CM}(x)$ belongs to $\inte\CM$. If $F_{\CM}(x)=F^{\tom(x)}_{\S}(x)$, then $\tom(x)$ is called the return time for the cylinder. 
\end{definition}
\begin{remark}
\label{rem-returncylinder}
Note that  for any $y\in]a,b[$, $\tom(y)=\tom(x)$. Moreover,   $g_{\CM}(C_{1})=\I$.
$\blacksquare$\end{remark}

Each 1-cylinder defines an inverse branch for $\gm$. This allows to define higher generation cylinders:

\begin{definition}
\label{def-cylinderGn}
We define by induction cylinders of generation $n+1$, as the preimages by an inverse branch of $\gm$  of cylinders of generation $n$. 
A cylinder of generation $n$ will also be called $n$-cylinder. 

For $x$ satisfying $g_{\CM}^{n}(x)\in\inte\CM$, the $n$-cylinder which contains $x$ will be denoted by $C_{n}(x)$. 
\end{definition}

Associated to the cylinders of generation $n$, there is a $n^{th}$-return time $\tom^{n}(x)$ (for $x$ in its interior). 
Note the cocycle relation 
\begin{equation}
\label{equ-cocuclereturn}
\tom^{n+1}(x)=\tom(x)+\tom^{n}(g_{\CM}(x)). 
\end{equation}

\bigskip
\paragraph{We can now prove Proposition \ref{prop-returndynlip}}

First we do two simple observations. 

\begin{obs}\label{obs-deltadistoelig2}
Let $\CF^{u}$ be an eligible curve of type 2. Set $\CF^{u}=\iota(\mathcal W^{u})$ where $\mathcal W^{u}$ is real local strong unstable leaf. Let $A$ and $B$ be in $\mathcal W^{u}$, $A'=\iota (A)$, $B'=\iota (B)$, $A''=\TCT(A')$ and $B''=\TCT(B')$ (see Fig. \ref{fig-controldistotype2}).

Then there exists constant $\kappa=\kappa(\kappa_{1},\kappa_{2})>0$ and $\ga>0$ such that 
$$d_{\CW^{u}}(A,B)\le \kappa d_{\CT}^{\ga}(A'',B'')\text{ and }d_{\CT}(A'',B'')\le \kappa d_{\CW^{u}}^{\ga}(A,B).$$
\end{obs}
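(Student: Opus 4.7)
The plan is to decompose the passage $A \mapsto A''$ into the two natural projections it is composed of, and to handle each in turn: first the flow-projection $A \mapsto A' = \iota(A)$ from the true strong unstable arc $\CW^{u}$ onto the curve $\CF^{u}\subset\S$, then the $W^{ss}$-holonomy $A' \mapsto A'' = \TCT(A')$ inside $\S$ from $\CF^{u}$ onto the basis $\CT$. The first step will be shown to be bi-Lipschitz with constants depending only on $\kappa_{1},\kappa_{2}$, and the second step to be bi-H\"older with analogous constants; composing the two yields the stated bi-H\"older equivalence.

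For the first step, the type-$2$ condition for $\CF^{u}$ says that $\CW^{u}$ is a graph from $\S$ to $\langle X\rangle$ with slope at most $\kappa_{2}$. Writing $A = f_{\tau(A')}(A')$ for $A'\in\CF^{u}$, this precisely means that $\tau:\CF^{u}\to[-\eps,\eps]$ is Lipschitz with constant controlled by $\kappa_{2}$. Combined with the standing hypothesis $\measuredangle(T_{x}\S,X(x))>\pi/4$ on any cross-section and the smoothness of the flow, this is nothing more than Remark \ref{rem-iotabilip} applied between the local surfaces $\CW^{u}$-cylinder and $\S$, and it gives
$$\frac{1}{C(\kappa_{1},\kappa_{2})}\,d_{\CF^{u}}(A',B')\ \le\ d_{\CW^{u}}(A,B)\ \le\ C(\kappa_{1},\kappa_{2})\,d_{\CF^{u}}(A',B').$$

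For the second step, both $\CT$ (a true local strong unstable leaf) and $\CF^{u}$ (the trace on $\S$ of a $cu$-surface) are uniformly transverse inside $\S$ to the strong stable foliation of $\S$, the transversality angle being controlled from below by the dominated splitting together with the slope bounds $\kappa_{1},\kappa_{2}$. The Araujo--Melbourne theorem \cite{AraujoMelbourne19} asserts that the strong stable foliation of a $3$-dimensional singular hyperbolic attractor is H\"older continuous; hence the holonomy $\TCT:\CF^{u}\to\CT$ is H\"older with some exponent $\alpha>0$, and by the symmetric role played by the two transversals the inverse holonomy is H\"older with some exponent $\beta>0$. Setting $\gamma:=\min(\alpha,\beta)\in(0,1]$ and enlarging the constant, one gets
$$d_{\CT}(A'',B'')\ \le\ \kappa\,d_{\CF^{u}}(A',B')^{\gamma}\quad\text{and}\quad d_{\CF^{u}}(A',B')\ \le\ \kappa\,d_{\CT}(A'',B'')^{\gamma}.$$
Concatenating with the bi-Lipschitz equivalence of the first step produces the two inequalities of the Observation with $\kappa=\kappa(\kappa_{1},\kappa_{2})$ and $\gamma=\gamma(\kappa_{1},\kappa_{2})>0$.

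The main obstacle is really the second step: the Araujo--Melbourne statement directly delivers H\"older regularity of the stable foliation, but one has to check that the H\"older estimate is genuinely two-sided, with exponent and constant depending only on the transversality to $W^{ss}$. For the Lorenz attractor and nearby attractors, \cite{AraujoMelbourne16} shows that the foliation is even Lipschitz, so the Observation then reduces to a bi-Lipschitz equivalence and no use is made of a proper H\"older exponent. In all cases the fact that both $\CT$ and $\CF^{u}$ are ``unstable-like'' curves (hence interchangeable as transversals for the argument) is what allows us to take a single exponent $\gamma$ valid in both directions.
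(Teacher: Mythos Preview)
Your proof is correct and follows essentially the same route as the paper's own (one-sentence) justification: the paper simply notes that $\CW^{u}$ has bounded slope with respect to $\S$ and $X$, that the eligible curve has bounded slope with respect to $\CT$ and $E^{ss}$, and that the stable holonomies are H\"older continuous. Your decomposition into a bi-Lipschitz flow-projection step (controlled by $\kappa_{2}$, cf.\ Remark~\ref{rem-iotabilip}) and a bi-H\"older stable-holonomy step (via \cite{AraujoMelbourne19}) is exactly a fleshed-out version of that sentence, including the correct observation that the symmetry between the two transversals is what yields a single exponent $\gamma$ valid in both directions.
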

This holds because $\CW^{u}$ has bounded slope with respect to $\S$ and $X$, the eligible curve has bounded slope with respect to $\CT$ and $E^{ss}$ and the stable holonomies are H\"older continuous. 

The second observation is a simple consequence of Lemma \ref{lem-expandeuus}: 
\begin{obs}\label{obs-expan}
If $\CW^{u}$ is the image by $F_{\S}^{k}$ of some connect $(\S,u)$-curve, and if we set $A=\FS(a)$ and $B=\FS(b)$, then 
$$d_{\CT}(a,b)\le \kappa e^{-kR}d_{\CW^{u}}(A,B),$$
where $R$ is the minimal return time in $\S$. 
\end{obs}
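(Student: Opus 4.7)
The plan is to iterate Lemma \ref{lem-expandeuus} along the forward orbit $a_0=a,\,a_1=F_\S(a),\,\dots,\,a_k=F_\S^k(a)=A$, translate the expansion in the direction $X^\perp\subset E^{cu}$ into expansion of arc length along the $(\S,u)$-curve, and finally pass from arc length to the $\CT$-distance using the graph structure.

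First, I would record the geometric compatibility of $(\S,u)$-curves with the direction $X^\perp$. By Definition \ref{def-sucurveconti} a $(\S,u)$-curve is the trace in $\S$ of a $cu$-surface, so its tangent vectors lie in $E^{cu}\cap T\S$. The cross-section satisfies $\measuredangle(T_x\S,X(x))>\pi/4$, and the splitting $E^{cu}=X\oplus X^\perp$ is continuous, so the tangent direction of the curve makes an angle bounded away from $\pi/2$ with $X^\perp$. Therefore the infinitesimal arc length along the curve and its projection onto $X^\perp$ differ only by a uniform multiplicative constant. Moreover, since the $(\S,u)$-curve is a Lipschitz graph over an interval of $\CT$ with slope controlled by the transversality of $E^{cu}$ and $E^{ss}$, its arc length is also uniformly comparable to the length of its projection on $\CT$.

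Second, I would compose Lemma \ref{lem-expandeuus} along the orbit. For each $0\le i\le k-1$,
\begin{equation*}
\bigl|Df_{r_\S(a_i)}(a_i)\big|_{X(a_i)^\perp}\wedge X(a_{i+1})\bigr|\ge C'\,e^{\lambda\, r_\S(a_i)}.
\end{equation*}
Chain-ruling yields, for the composite $F_\S^k$, a lower bound on the $X^\perp$-expansion of the form $(C')^k\exp\bigl(\lambda\sum_{i=0}^{k-1}r_\S(a_i)\bigr)$. Since each return time is bounded below by the minimal return $R$ in $\S$, this is at least $(C')^k e^{\lambda kR}$; after absorbing $\lambda$ and $C'$ into $R$ (or into the prefactor $\kappa$), the expansion is at least $e^{kR}$ up to a constant.

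Third, I would combine the two ingredients. By the first step, the derivative of $F_\S^k$ restricted to the tangent direction of the original $(\S,u)$-curve expands arc length by at least a uniform constant times the $X^\perp$-expansion; by Lemma \ref{lem-imagesucurve} the image $\CW^u$ is again a $(\S,u)$-curve, so arc length on $\CW^u$ is also comparable to its $\CT$-projection. Inverting $F_\S^k$ gives contraction of arc length by $e^{-kR}$, and projecting by $\TCT$ at both ends yields $d_\CT(a,b)\le \kappa\, e^{-kR}\,d_{\CW^u}(A,B)$.

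The main obstacle is the uniformity in step one: Lemma \ref{lem-expandeuus} delivers expansion in the direction $X^\perp$, but to convert this into genuine arc-length expansion along the $(\S,u)$-curve we need the tangent of the curve to stay uniformly transverse to $X$ and to $E^{ss}$ at every return point $a_i$. This is built into the definitions (domination of the splitting, the $\pi/4$-angle condition on $\S$, and the uniform cone controlling $(\S,u)$-curves as graphs over $\CT$), but care is needed to check that these bounds are independent of $k$, so that the multiplicative constants accumulated over the $k$ iterates can be absorbed in the single factor $\kappa$.
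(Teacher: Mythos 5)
Your overall plan is the right one, and the first and third steps are sound: $(\S,u)$-curves are Lipschitz graphs over $\CT$ with uniform slope control from the transversality of $E^{cu}$ and $E^{ss}$ and the $\pi/4$-angle condition on $\S$, so arc length, $X^\perp$-projection, and $\CT$-projection are all uniformly comparable, and one converts the $X^\perp$-expansion bound into a contraction bound for $F_\S^{-k}$. The paper itself gives no proof (it says only that the observation is ``a simple consequence of Lemma~\ref{lem-expandeuus}''), so the question is whether your reconstruction closes cleanly.

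The gap is exactly where you flag it, and you don't actually close it. Naively chain-ruling the per-return bound
$\bigl|Df_{r_\S(a_i)}(a_i)\big|_{X(a_i)^\perp}\wedge X(a_{i+1})\bigr|\ge C'e^{\lambda r_\S(a_i)}$
multiplies the constant $C'$ with itself $k$ times, yielding $(C')^{k}e^{\lambda\sum r_\S(a_i)}$. That constant \emph{cannot} be absorbed into a single $\kappa$ in the claimed inequality $d_\CT(a,b)\le\kappa e^{-kR}d_{\CW^u}(A,B)$ unless one happens to know $C'\ge 1$ (more precisely $C'e^{\lambda R}\ge e^R$), which the construction of Lemma~\ref{lem-expandeuus} does not give. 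The fix is not to compose the single-step estimate but to apply the \emph{same argument} once to $F_\S^k$: since $F_\S^k(a)=f_{r_\S^k(a)}(a)$ with $r_\S^k(a)=\sum_{i=0}^{k-1}r_\S(a_i)\ge kR$, the sectional-expansion property of the singular hyperbolic splitting holds verbatim for this total time with the same fixed constant $C$, and the only correction needed (the ratio of $\|X\|$ at the two endpoints $a$ and $F_\S^k(a)$, both on $\S$, and the fixed geometry of the cross-section) is bounded independently of $k$. This gives
$\bigl|Df_{r_\S^k(a)}(a)\big|_{X(a)^\perp}\wedge X(F_\S^k(a))\bigr|\ge C'e^{\lambda r_\S^k(a)}\ge C'e^{\lambda kR}$
with a single $C'$, and then your steps one and three finish the argument. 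In short: Lemma~\ref{lem-expandeuus} is a one-return corollary of an estimate that already holds at all cumulative times; invoke the parent estimate once, rather than iterating the corollary.
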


\begin{figure}[htbp]
\begin{center}
\includegraphics[scale=0.6]{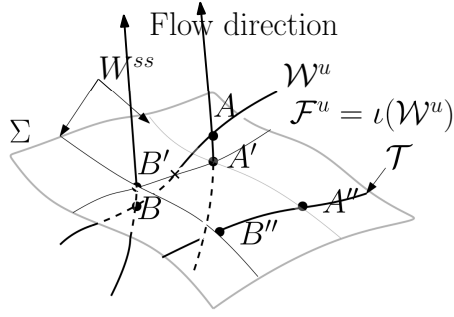}
\caption{Control of distorsion for eligible curve of type 2}
\label{fig-controldistotype2}
\end{center}
\end{figure}

We remind that $\iota$ is Lipschitz continuous  because the vector field $X$ is $\CC^{2}$ and $\TCT$ is H\"older continuous (see \cite{AraujoMelbourne19}). 


The proof is done by induction. 
We consider $x$ and $y$ in the same $1$-cylinder  $C_{1}$ in the transversal $[p_{-},p_{+}]$ (see Fig. \ref{fig-deltareturn}). By definition of $\CM$, $\FM(C_{1})$ is an eligible curve of type 2. Then, Observation \ref{obs-deltadistoelig2} yields

\begin{figure}[htbp]
\begin{center}
\includegraphics[scale=0.5]{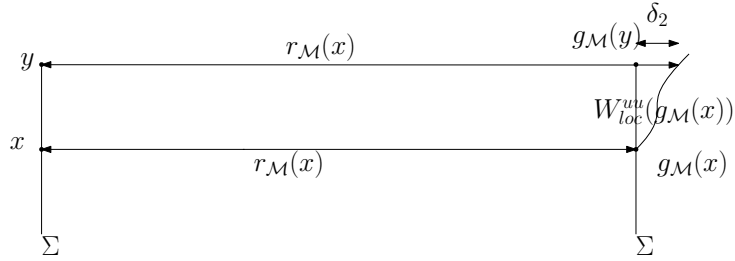}
\caption{Difference of return times}
\label{fig-deltareturn}
\end{center}
\end{figure}

\begin{eqnarray*}
|r_\CM(x)-r_\CM(y)|&=& |\delta_{2}|\\
&\le &\kappa.d^{\gamma}_{\CT}(\gm(x),\gm(y)).
\end{eqnarray*}

Moreover, Observation \ref{obs-expan} yields 
\begin{equation}
\label{eq-contraT}
d_{\CT}(x,y)\le e^{-R}d^{\ga}_{\CT}(\gm(x),\gm(y)).
\end{equation}

Assume that 
$$|\rm^{n}(x)-\rm^{n}(y)|\le C d_{\CT}^{\ga}(\gm^{n}(x)-\gm^{n}(y))$$
holds whenever $x$ and $y$ are in the same $n$-cylinder and let us prove the same property for $(n+1)$-cylinder.
We pick $x$ and $y$ in the same $(n+1)$-cylinder. Then they are in the same $n$-cylinder and furthermore,  $\gm^{n}(x)$ and $\gm^{n}(y)$ are in the same 1-cylinder. Moreover 
$$\rm^{n+1}(x)=\rm^{n}(x)+\rm(\gm^{n}(x))$$ and the same holds for $y$. 

Then we get 
\begin{eqnarray*}
|\rm^{n+1}(x)-\rm^{n+1}(y)|&\le& |\rm^{n}(x)-\rm^{n}(y)|+|\rm(\gm^{n}(x))-\rm(\gm^{n}(y))|\\
&\le & C. d_{\CT}^{\ga}(\gm^{n}(x)-\gm^{n}(y))+ \kappa. d^{\ga}_{\CT}(\gm ^{n+1}(x),\gm^{n+1}(y))\\
&\le & C.e^{-\ga.R}d^{\ga^{2}}_{\CT}(\gm^{n+1}(x),\gm^{n+1}(y))+\kappa. d^{\ga}_{\CT}(\gm ^{n+1}(x),\gm^{n+1}(y)). 
\end{eqnarray*}
If $C$ satisfies $Ce^{-\ga.R}L^{\ga}+\kappa\le C$, with $L$ equal to the length of $\I$, then the same property holds at stage $n+1$ and the proposition is proved.

\subsubsection{Step 4. Cylinders are dense in $\I$}

 \begin{proposition}
\label{prop-densereturnlillefeuille}Let $\kappa_{2}$ be fixed. Let $\CM$ be a mille-feuilles.
  The set of points with infinitely many returns has dense projection in $\I$.

\end{proposition}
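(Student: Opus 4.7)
The plan is to run a Baire-category argument inside $\I$. Let $A_n\subset\I$ denote the union of the open interiors of all $n$-cylinders of $\gm$. By the skew-product structure of $F_\CM$ over $\gm$ (Proposition \ref{prop-Markov return} together with Remark \ref{rem-returncylinder}), the set of $x\in\CM$ admitting infinitely many $F_\CM$-returns projects under $\TCT$ exactly onto $\bigcap_{n\ge 1}A_n$. Each $A_n$ is open in $\I$, so it will be enough to prove density of each $A_n$: Baire's theorem applied in the complete metric space $\I$ then produces a dense $G_\delta$.

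Density of $A_1$ is the key step, and I would argue it by contradiction using the GALEO property (Proposition \ref{prop-leo}). Assume a nonempty open $J\subset\I$ meets no 1-cylinder interior. Viewed as an open sub-interval of $\CT$, the interval $J$ feeds the GALEO property, producing a time $t>0$ and a sub-curve $J^\star\subset f_t(J)$ of slope at most $\kappa_1$ with respect to $X^\perp$ versus $X$, contained in some $\Seps$, and satisfying $\TCT\circ\iota(J^\star)=\CT\supset\I$. The construction in the proof of Proposition \ref{prop-leo} produces $J^\star$ as the forward flow image of a piece of a genuine local $\ko$-unstable leaf, so $\iota(J^\star)$ is an $\iota$-projection of an unstable manifold; because its slope is at most $\kappa_1\le\kappa_2$, the sub-arc of $\iota(J^\star)$ lying in the strip over $\I$ fulfils the requirements of Definition \ref{def-eligible curve type 2} and is an eligible curve of type 2. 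Consequently the corresponding vertical band in $\CP\CM$ is of type $\alpha$, so a sub-interval of $J$ lies in the interior of a 1-cylinder of $\gm$, contradicting the choice of $J$.

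For the inductive step, the Markov property (Proposition \ref{prop-Markov return}) combined with Remark \ref{rem-returncylinder} gives that $\gm$ restricts to a homeomorphism from the interior of every 1-cylinder onto $]p_{-},p_{+}[$; by induction, $\gm^n$ restricts to a homeomorphism from the interior of every $n$-cylinder $C_n$ onto $]p_{-},p_{+}[$. Pulling back the dense open set $A_1$ through this homeomorphism gives a dense open subset of the interior of $C_n$, whose components are precisely the $(n+1)$-cylinder interiors contained in $C_n$; combined with density of $A_n$, this yields density of $A_{n+1}$, and Baire's theorem then completes the proof.

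The main delicate point is the verification in the second paragraph that the GALEO return is of type $\alpha$ and not merely of type $\beta$. This rests on two design choices: $\kappa_2$ was taken $\ge\kappa_1(\S)$ when defining eligible curves of type 2, so the GALEO slope bound automatically implies the type-2 slope bound; and the GALEO curves, by the construction carried out in Proposition \ref{prop-leo}, genuinely descend from strong-unstable leaves, so they satisfy clause (1) of Definition \ref{def-eligible curve type 2}. Once these are in place, the remaining pieces (the intertwining with the base dynamics, the homeomorphism property on cylinders, and the Baire deduction) are routine.
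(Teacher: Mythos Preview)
Your proof is correct and follows essentially the same route as the paper's: both establish that the union of 1-cylinder interiors is open and dense in $\I$ from the GALEO property (using $\kappa_1<\kappa_2$ to conclude the GALEO return yields an eligible curve of type 2, hence a type-$\alpha$ return), then propagate density to $n$-cylinders via the surjectivity of $\gm$ on each 1-cylinder (Remark \ref{rem-returncylinder}), and finish with Baire's theorem. Your justification of why the GALEO return is of type $\alpha$ is slightly more explicit than the paper's one-line assertion, but the argument is identical in substance.
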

%
\begin{proof}
$\bullet$ In the first step, we prove denseness of points with at least one return time.
This is direct consequence of the GALEO property. 
If $I$ is a small interval in the transversal $\I$, then, for some return $\FS^{k}(I)$ is an eligible curve of  type 2 because $\kappa_{1}<\kappa_{2}$. Moreover it is a long curve whose projection by $\TCT$ overlaps $\CT$, and then overlaps $\I$. In other word, this return is return for $\FM$, and then $I$ contains points with at least one return in $\CM$. 

Furthermore, the Markov property (see Lemma \ref{lem-markovreturn}) yields that 
the set of points with at least one return into $\inte\CM$ has an open  projection in $\I$.

$\bullet$ We finish the proof of the Proposition. 
Points in the transversal with at least one return form an open and dense set in $\CI$. 
To employ vocabulary of the one-dimensional dynamics associated to $\CM$, we have just proved here that the union of the interiors of the 1-cylinders in $\I$ is open and dense in $\I$. 

It is thus immediate that the union of the interiors of 2-cylinders is open and dense in each 1-cylinder, because the return  map a 1-cylinder onto $\I$. Consequently, and by induction, for every $n$, the union of the interiors of the $n$-cylinders is open and dense in $\I$. By Baire's Theorem, its intersection is dense. This finishes the proof of the proposition. 
\end{proof}

%

\subsection{Generalized Mille-feuilles and invariant measures}

We finish this section with two important points to prove uniqueness of the equilibrium state among regular measures. 
\begin{proposition}
\label{prop-millefeuillesmeasure}
For any regular measure $\mu$, for any $\mu$-regular point $x$, there exists a mille-feuilles $\CM$ containing $x$ and with positive* $\mu$-measure. 

Furthermore, it can be constructed such that it also have positive* $\mu_{SRB}$-measure. 
\end{proposition}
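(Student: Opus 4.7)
My plan is to tailor the mille-feuilles construction to the given $\mu$-regular point $x$ in three stages, picking the slope threshold $\kappa_2$ so that $x\in\CM$ and $\CM$ has positive* measure for both $\mu$ and $\mu_{SRB}$ simultaneously.

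First, combining Propositions \ref{prop2-goodcrosssec} and \ref{prop-leo}, I would construct a SACS $\Sigma$ whose interior contains $x$, whose basis $\CT$ is a true strong-unstable leaf of a periodic point near $x$, and which enjoys the GALEO property. The construction of Proposition \ref{prop-leo} carries over to the neighborhood of the arbitrary $x$ because periodic orbits are dense in $\Lambda$ (Lemma \ref{lem-perioddense}) and $\mu_{SRB}$ has full support, so $\mu_{SRB}$-generic points with long unstable leaves accumulate $x$. Using density of $g_\CT$-periodic points (Corollary \ref{cor-periodense1dim}), I then pick consecutive periodic points $p_\pm$ bracketing $\Theta_\CT(x)$, define the strip $\CB=\Theta_\CT^{-1}[p_-,p_+]$ (with $x$ in its interior), and form the pre-mille-feuilles $\CP\CM$. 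Because $\mu$ is regular and hence hyperbolic, Pesin theory furnishes $x$ with a local strong-unstable leaf $W^{uu}_{loc}(x)$, and $\iota(W^{uu}_{loc}(x))$ is an eligible curve of type 1 through $x$ in $\CP\CM$.

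Second, I would fix $\kappa_2>\kappa_1$ large enough so that, via Lusin's theorem applied inside Pesin blocks for $\mu$ and for $\mu_{SRB}$ near $x$: (a) the slopes of $W^{uu}_{loc}(x)$ and of the specific basis iterate $\FS^{k}(\CT\cap b)$ responsible for the horizontal strip of $\CP\CM$ containing $x$ are both bounded by $\kappa_2$, which forces $x\in\CM$; and (b) the set $\CU\subset\CB$ of points whose eligible $(\Sigma,u)$-curve has slope below $\kappa_2/C$, with $C$ a distortion constant afforded by Proposition \ref{prop-returndynlip}, has positive $\mu^\Sigma$-measure and positive $\mu_{SRB}^\Sigma$-measure. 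The distortion estimate then guarantees that the horizontal strip containing any $y\in\CU$ has its reference curve $\FS^{k_i}(\CT\cap b_i)$ of slope $\le\kappa_2$, hence of type $\alpha$, so $\CU\subset\CM$.

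Finally, to conclude positive* measure, I would use the Special Representation Theorem (as in the paragraph following Lemma \ref{lem-perioddense}) to write $\mu$ locally as $d\mu\propto d\mu^\Sigma\otimes dt$, so that $\mu(\CM_{[-\eps,\eps]})>0$ reduces to $\mu^\Sigma(\CM)>0$, and analogously for $\mu_{SRB}$. Since $\CU\subset\CM$ and (b) provides $\mu^\Sigma(\CU),\mu_{SRB}^\Sigma(\CU)>0$, both positive* measure conclusions follow. The main obstacle I foresee is the uniform distortion control underlying (b): proving that the slopes of the iterated basis fragments $\FS^{k_i}(\CT\cap b_i)$ and the slopes of the ambient eligible curves in the same horizontal strip remain comparable up to a universal constant, uniformly in the return time $k_i$. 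This should follow from the Dynamically H\"older regularity of $r_\Sigma$ (Proposition \ref{prop-returndynlip}) and the smoothness of the basis $\CT$, but the proof itself is the technical heart of the argument.
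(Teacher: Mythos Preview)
Your overall outline mirrors the paper's: build a SACS with the GALEO property near $x$ (the paper observes that the argument of Proposition~\ref{prop-leo} goes through with $\mu$ in place of $\mu_{SRB}$, since $\mu$ is hyperbolic and its generic points carry Pesin unstable leaves), bracket $\Theta_\CT(x)$ by consecutive $g_\CT$-periodic points to form the strip, and then enlarge $\kappa_2$.

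The gap is precisely the obstacle you flag in your last paragraph, and Proposition~\ref{prop-returndynlip} does not dissolve it. Invoking that proposition here is circular: it concerns the return time $r_\CM$ of the already-built mille-feuilles, and its proof uses that every first-return image $\FM(C_1)$ of the basis is an eligible curve of type~2 (this is what feeds Observation~\ref{obs-deltadistoelig2}); you cannot appeal to it while still deciding which horizontal strips are of type~$\alpha$. Independently of the circularity, your comparison ``slope of the eligible curve through $y$ $\le \kappa_2/C$ implies slope of $\FS^{k_i}(\CT\cap b_i)\le\kappa_2$'' is not justified: these are the flow-direction deviations of two \emph{different} lifted curves in the same strip $h_i$, and no uniform multiplicative relation between them is available at this stage of the construction.

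The paper sidesteps this by monotonicity in $\kappa_2$. Since the basis $\CT$ is a genuine strong-unstable leaf, each basis iterate $\FS^{k_i}(\CT\cap b_i)$ is the $\iota$-image of a true unstable leaf with some finite slope, so every horizontal strip eventually becomes type~$\alpha$ as $\kappa_2\uparrow\infty$, and $\CM$ increases towards the full pre-mille-feuilles. Now $\CP\CM$ already has positive* $\mu$-measure because $\mu$-regular points in the strip lie on eligible curves of type~1 (Proposition~\ref{prop-sucurvesconti}) and $x$ is taken to be a density point for these finitely many properties; the same holds for $\mu_{SRB}$ by full support. Hence some finite $\kappa_2$ gives $\mu^\Sigma(\CM)>0$ and $\mu_{SRB}^\Sigma(\CM)>0$ simultaneously, and one further enlarges $\kappa_2$ if needed so that the particular strip containing $x$ is of type~$\alpha$, forcing $x\in\CM$. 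No slope comparison between distinct curves in a common strip is required.
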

\begin{proof}
We have already seen in Proposition \ref{prop2-goodcrosssec} that we can construct a SACS with this property. 
The main point is to check that we can also get the GALEO property. This holds because to get the GALEO property (see Prop. \ref{prop-leo}) we used $\mu_{SRB}$-regular points but we could actually use $\mu$ (which is Hyperbolic ans thus has \AE Pesin local unstable leaves). 

Then, denseness of $g_{\CT}$-periodic point allows to choose $\I$ such that $x$ belongs to the pre-mille-feuilles, and if $\kappa_{2}$ increases, $x$ will belong to the mille-feuilles. If $x$ is a density point for $\mu$ with all the (finitely many) properties involved  above, then the mille-feuilles has positive* $\mu$-measure.

Now, $\mu_{SRB}$ has dense support and any $\mu_{SRB}$ regular point admits a local unstable manifold (due to Pesin theory). Therefore, we may increase $\kappa_{2}$ such that every $\mu_{SRB}$-regular point sufficiently close to $x$ has a so long unstable local manifold that it is eligible of type 2. 
\end{proof}

Now, we introduce the concept of generalized mille-feuilles: 

\begin{definition}
\label{def-generalizedmilfeui}
Let $\CM$ be a mille-feuilles with basis $\I$. Let $C_{n}$ be a $n$-cylinder $\CB_{n}:=\TCT^{-1}(C_{n})\cap \CM$. Then $\CB_{n}$ is called a generalized mille-feuilles. 
\end{definition}
A generalized mille-feuilles is not properly speaking a mille-feuilles because the extremal points in the basis are not periodic but pre-periodic (for the return global map $g_{\CT}$). Nevertheless, the crucial dynamical properties are the same: 

\begin{enumerate}
\item It has a rectangle structure, and can be seen as $[0,1]\times \Gamma$ where $\Gamma$ is a Cantor set in $[0,1]$. Verticals $\{x\}\times \Gamma$ are Cantor sets into $W^{ss}_{loc}$ and horizontals ``$[0,1]\times \{y\}$'' are the restriction of $W^{uu}_{\CM}(y)$ to the vertical band $\TCT^{-1}(C_{n})$. 
 \item It is compact because $\CM$ and $C_{n}$ are compact. 
\item The first return is Markov: image of verticals are strictly inside verticales and images of horizontals overlap horizontals. In other words, Prop. \ref{prop-Markov return} holds.
\item Return times are dynamically H\"older, that is Prop. \ref{prop-returndynlip} holds because returns in the generalized mille-feuilles are returns in the mille-feuilles. 
\end{enumerate}

\section{Inducing scheme over a mille-feuilles}\label{sec-inducingschemealanono}

\subsection{Induced potential}
Let $\CM$ be a mille-feuille. We recall equalities:
$$\gm^{n}(x):=\TCT\circ\FS^{\tom^{n}(x)}(x)=\TCT\circ f_{r_\CM^{n}(x)}(x).$$
In the following, $\tom$ will be referred to as the return time for $\FS$ and $r_{\CM}$ to as the \emph{roof function}. 

\begin{nota}
To lighten notations one shall write $\disp |x-x'|$ instead of $d_{\CT}(\gm(x),\gm(x'))$.
\end{nota}

\begin{definition}
Assume that $\CM$ is a mille-feuilles of a singular hyperbolic attractor $\Lambda$ of $X$ with  roof function $r_\CM$. 
For any potential $V:\Lambda\to \R$, the function $V_{\CM,Z}$ defined on $\CM$ by 
$$V_{\CM,Z}(x):=\int_{0}^{r_\CM(x)}V\left(f_{t}(x)\right)\,dt-Zr_\CM(x)$$
 is said to be the \emph{induced potential of $V$ associated to parameter $Z$}. 
\end{definition}

\begin{lemma}
\label{lem-coboundaryA}
For $y$ be in $\CM$ set $B(y):=\disp \int_{0}^{+\8}V\left(f_{t}(y)\right)-V\left(f_{t}(\TCT(y))\right)\,dt$ and $W(y):=\vmo(y)-B(f_{\rm(y)}(y))$. 
Then, 
\begin{equation}
\label{equ-coboundaryAW}
\vmo(y)=W(\TCT(y))+B(y)-B\circ \FM(y).
\end{equation}
\end{lemma}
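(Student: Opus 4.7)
The plan is to prove \eqref{equ-coboundaryAW} by a direct computation: split $B(y)$ at the first return time, recognise the initial segment as $\vmo(y) - \vmo(\TCT(y))$, and the tail as $B(\FM(y)) - B(\FM(\TCT(y)))$, then reorganise against the definition of $W$.

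The essential preliminary observation will be that $\rm(y) = \rm(\TCT(y))$. Indeed, $\TCT(y)$ lies on the strong-stable fibre $W^{ss}_{\CM}(y)$, and the rectangle/Markov structure (Proposition \ref{prop-Markov return}) forces an entire stable fibre to sit inside a single vertical band and therefore map under $\FM$ to a single horizontal strip with a common return time. Consequently $\FM(\TCT(y)) = f_{\rm(y)}(\TCT(y))$, and by the skew-product identity $\TCT \circ \FM = \TCT \circ \FM \circ \TCT$ inherited from the $s$-adapted structure, both $\FM(y)$ and $\FM(\TCT(y))$ project under $\TCT$ to the same point $\gm(\TCT(y))$. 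Before starting the computation one should also check that $B(y)$ is well-defined: since $\TCT(y)$ lies on the stable leaf of $y$, the Contraction axiom yields $d(f_t(y), f_t(\TCT(y))) \le C e^{-\lambda t}$, so H\"older-continuity of $V$ makes the integrand decay exponentially.

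The main computation will go as follows. Split
$$B(y) = \int_0^{\rm(y)} \bigl[V(f_t(y)) - V(f_t(\TCT(y)))\bigr]\, dt + \int_{\rm(y)}^{+\infty} \bigl[V(f_t(y)) - V(f_t(\TCT(y)))\bigr]\, dt.$$
Since $\rm(y) = \rm(\TCT(y))$, the first piece is exactly $\vmo(y) - \vmo(\TCT(y))$. For the tail, substitute $s = t - \rm(y)$ and use the preliminary observation to rewrite it as
$$\int_0^{+\infty} \bigl[V(f_s(\FM(y))) - V(f_s(\FM(\TCT(y))))\bigr]\, ds.$$
Then add and subtract $V(f_s(\gm(\TCT(y))))$, which simultaneously equals $V(f_s(\TCT(\FM(y))))$ and $V(f_s(\TCT(\FM(\TCT(y)))))$, and this exhibits the tail as $B(\FM(y)) - B(\FM(\TCT(y)))$. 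Combining the two contributions gives
$$B(y) - B(\FM(y)) + B(\FM(\TCT(y))) = \vmo(y) - \vmo(\TCT(y)).$$
Finally, recognising $W(\TCT(y)) = \vmo(\TCT(y)) - B(\FM(\TCT(y)))$ from the definition of $W$ and rearranging yields \eqref{equ-coboundaryAW}.

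There is no genuine obstacle here; the lemma is essentially an exercise in bookkeeping. The only point worth flagging is that everything hinges on the skew-product identity $\TCT \circ \FM = \TCT \circ \FM \circ \TCT$, which collapses the two ``$\TCT$-projections'' appearing in the tail and enables the telescoping. This identity is built into the $s$-adapted construction of the mille-feuilles.
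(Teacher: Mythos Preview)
Your approach is the same as the paper's: split at the return time, use $\rm(y)=\rm(\TCT(y))$ to identify the initial segment with $\vmo(y)-\vmo(\TCT(y))$, and telescope the tail via the skew-product identity $\TCT\circ\FM=\TCT\circ\FM\circ\TCT$. Your identification of the tail as $B(\FM(y))-B(\FM(\TCT(y)))$ is correct.

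The gap is in your last line, ``rearranging yields \eqref{equ-coboundaryAW}''. Writing $x=\TCT(y)$, your displayed identity reads
\[
\vmo(y)=\vmo(x)+B(y)-B(\FM(y))+B(\FM(x)),
\]
and substituting the stated definition $W(x)=\vmo(x)-B(\FM(x))$ gives
\[
\vmo(y)=W(x)+B(y)-B(\FM(y))+2B(\FM(x)),
\]
not \eqref{equ-coboundaryAW}. The residual $2B(\FM(x))$ does not vanish, since $\FM(x)$ is in general not on the basis $\CT$. In fact the lemma as printed is internally inconsistent: evaluating \eqref{equ-coboundaryAW} at a point $y=x\in\CT$ (so that $B(x)=0$) forces $W(x)=\vmo(x)+B(\FM(x))$, contradicting the defining formula with the minus sign. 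The paper's own computation reaches the stated formula only because it writes the tail as $B(\FM(y))+B(\FM(x))$ rather than $B(\FM(y))-B(\FM(x))$; the two sign slips cancel. The clean fix is to take $W(y):=\vmo(y)+B(\FM(y))$ in the definition; with that sign your argument (and the paper's intended one) closes verbatim, and nothing downstream changes since $B$ is bounded and only the coboundary structure is used.
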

\begin{proof}
This is a standard computation. Set $\TCT(y)=x$. 
\begin{eqnarray*}
\vmo(y)&=& \int_{0}^{\rm(y)}V\circ f_{t}(y)dt\\
&=& \int_{0}^{\rm(y)}V\circ f_{t}(x)dt+\int_{0}^{\rm(y)}V\circ f_{t}(y)dt-\int_{0}^{\rm(y)}V\circ f_{t}(x)dt\\
&=& \int_{0}^{\rm(x)}V\circ f_{t}(x)dt+\int_{0}^{+\8}V\circ f_{t}(y)-V\circ f_{t}(x)dt-\int_{\rm(y)}^{+\8}V\circ f_{t}(y)-V\circ f_{t}(x)dt\\
&=& \vmo(x)+B(y)-\int_{0}^{+\8}V\circ f_{t}(f_{\rm(y)}(y))-V\circ f_{t}(\TCT(f_{\rm(y)}(y)))dt \\
&&\hskip 4cm   - \int_{0}^{+\8}V\circ f_{t}(f_{\rm(x)}(x))-V\circ f_{t}(\TCT(f_{\rm(x)}(x)))dt\\
&=& \vmo(x)+B(y)-B\circ\FM(y)-B\circ\FM(x)\\
&=& W(x)+B(y)-B\circ\FM(y).
\end{eqnarray*}
\end{proof}

\begin{remark}
\label{rem-bbounded}
We emphasize that uniform contraction in the strong stable direction yields that $B$ is well-defined and uniformly bounded. 
$\blacksquare$\end{remark}


%

One of the main point in our proof is to find good Banach spaces one which the transfer operator will act. For that purpose, we give here a key proposition:
\begin{proposition}
\label{prop-regularity}
Assume $V$ is $\al$-H\"older. Then there exists $\gamma>0$ and $C>0$ such that
if $x$ and $x'$ are in the same 1-cylinder, then 
$$|W(x)-W(x')|\le C |\gm(x)-\gm(x')|^{\ga}$$
holds, where $\ga$ comes from Prop. \ref{prop-returndynlip}. 
\end{proposition}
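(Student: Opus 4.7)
The plan is to use the identity $W(x) = \vmo(x) - B(\FM(x))$, valid for $x \in \CT$ since $B$ vanishes on the basis (because $\TCT|_\CT = \mathrm{id}$), and then bound each summand in the decomposition
$$W(x) - W(x') \;=\; \bigl[\vmo(x) - \vmo(x')\bigr] \;-\; \bigl[B(\FM(x)) - B(\FM(x'))\bigr]$$
by a power of $|\gm(x) - \gm(x')|$. The available inputs will be: $\alpha$-H\"older regularity of $V$; Proposition \ref{prop-returndynlip} giving $|r_\CM(x) - r_\CM(x')| \le \kappa|\gm(x) - \gm(x')|^\gamma$; the fact that $\FM(x), \FM(x')$ lie on the same eligible curve of type 2, so that Observation \ref{obs-deltadistoelig2} relates distances in $\CT$ to distances on that curve; and uniform stable contraction $e^{-\lambda t}$ along the flow in the strong stable direction, together with H\"older continuity of the stable holonomy.

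For the first summand, I will assume WLOG $r_\CM(x) \ge r_\CM(x')$ and split
$$\vmo(x) - \vmo(x') \;=\; \int_0^{r_\CM(x')}\!\bigl[V(f_t x) - V(f_t x')\bigr]\, dt \;+\; \int_{r_\CM(x')}^{r_\CM(x)} V(f_t x)\, dt.$$
The tail integral is bounded by $\|V\|_\infty |r_\CM(x) - r_\CM(x')|$, which is a power of $|\gm(x) - \gm(x')|$ by Proposition \ref{prop-returndynlip}. For the head integral, since $x, x' \in \CT$ lie on (the projection of) the same unstable leaf, backward iteration from time $r_\CM(x')$ contracts distances at rate $e^{-\lambda \cdot}$, giving $d(f_t x, f_t x') \le C e^{-\lambda(r_\CM(x') - t)} d(f_{r_\CM(x')}x, f_{r_\CM(x')}x')$; the endpoint distance is itself a power of $|\gm(x) - \gm(x')|$ by Observation \ref{obs-deltadistoelig2} applied to $\FM(x), \FM(x')$ together with the uniformly bounded time gap $|r_\CM(x) - r_\CM(x')|$. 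Integrating the resulting $\alpha$-H\"older bound for $V$ over $[0, r_\CM(x')]$ then yields a power bound.

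For the second summand, set $y = \FM(x), y' = \FM(x'), u = \gm(x), u' = \gm(x')$ and rewrite the integrand of $B(y) - B(y')$ as the second difference
$$\Delta_t \;:=\; \bigl[V(f_t y) - V(f_t u)\bigr] \;-\; \bigl[V(f_t y') - V(f_t u')\bigr].$$
Two complementary bounds hold. First, $f_t y$ and $f_t u$ lie in the same strong stable manifold (same for the primed pair), so $|V(f_t y) - V(f_t u)| \le C e^{-\alpha\lambda t} d(y,u)^\alpha$, giving $|\Delta_t| \le C e^{-\alpha\lambda t}$ with a constant uniform over $\CM$ (since stable heights are bounded). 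Second, using $\alpha$-H\"older regularity of $V$ together with at-most-exponential forward spreading $e^{\nu t}$ of pairs in $E^{cu}$, $|\Delta_t| \le C' e^{\alpha\nu t}|\gm(x) - \gm(x')|^{\alpha\gamma}$, where we have used Observation \ref{obs-deltadistoelig2} to pass from $d(y,y')$ (and from $d(u,u')$) to $|\gm(x) - \gm(x')|$. Taking the minimum of the two bounds and splitting the integral at the crossover time $t^\ast \simeq -\log|\gm(x) - \gm(x')|/(\lambda + \nu)$ produces $|B(\FM(x)) - B(\FM(x'))| \le C|\gm(x) - \gm(x')|^{\gamma'}$ for some $\gamma' > 0$.

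The main obstacle is this second-difference estimate for $B \circ \FM$: neither the stable-contraction bound (integrable in $t$ but of order $1$ in the constant, not small in $|\gm(x)-\gm(x')|$) nor the H\"older-in-unstable bound (small in $|\gm(x)-\gm(x')|$ but blowing up in $t$) works alone; one must interpolate between the two regimes at the appropriate scale, and the exponent produced is strictly smaller than $\alpha$. The final $\gamma$ in the proposition is the minimum of all exponents appearing along the way (those from Proposition \ref{prop-returndynlip}, from Observation \ref{obs-deltadistoelig2}, and from the interpolation), and the uniform constant $C$ is finite because stable heights in $\CM$ and return-time differences within a single 1-cylinder are uniformly bounded.
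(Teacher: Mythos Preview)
Your argument is correct and follows essentially the same route as the paper: the decomposition $W=\vmo-B\circ\FM$ on $\CT$, the tail/head split for $\vmo(x)-\vmo(x')$ using Proposition~\ref{prop-returndynlip} together with backward contraction along the true unstable leaf $\CT$, and the interpolation for $B(\FM(x))-B(\FM(x'))$ between the stable-contraction bound (integrable in $t$) and the forward H\"older/growth bound (small in $|\gm(x)-\gm(x')|$). The paper organizes the $B$-estimate slightly differently---it splits at a time $\eps T$ with $T$ the crossing time to a fixed scale $\delta$ and carries an explicit extra term for the offset $\Delta r=r_\CM(x')-r_\CM(x)$---whereas you absorb that offset by working directly with $\FM(x),\FM(x')\in\S$ via Observation~\ref{obs-deltadistoelig2}; these are equivalent presentations of the same interpolation.
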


\begin{proof}

%
%
%
%

$\bullet$
We want to bound $\disp |W(x)-W(x')|$ with  respect to $\disp |\gm(x)-\gm(x'|$. Note that  Prop. \ref{prop-returndynlip} yields
$$|\rm(x)-\rm(x')|\le \kappa|\gm(x)-\gm(x'|^{\ga}.$$
Note that we can always decrease $\ga$, the same kind of inequality will still hold. 
On the other hand we have 
\begin{eqnarray*}
\vmo(x)-\vmo(x')&=& \int_{0}^{\rm(x)}V(f_{t}(x))dt-\int_{0}^{\rm(x')}V(f_{t}(x'))dt\\
&=& \int_{\rm(x')}^{\rm(x)}V(f_{t}(x))dt+\int_{0}^{\rm(x')}V(f_{t}(x))-V(f_{t}(x'))dt. 
\end{eqnarray*}
As $V$ is bounded,  the first summand in the last equality is upper bounded by some quantity of the form  $\kappa'|\gm(x)-\gm(x'|^{\ga}$. 

Then, we recall that $\CT$ is a true piece of unstable leaf. Therefore, by definition of a true local unstable leaf (see Def. \ref{def-lambdacunstableman}) $\al$-H\"older regularity for $V$ shows that
the second summand in the last equality is upper bound by some quantity of the form  
$$\kappa''d^{\al}(f_{\rm(x')}(x),f_{\rm(x')}(x')).$$  
Then, we use Observations \ref{obs-deltadistoelig2} and \ref{obs-expan} to get 
$$d(f_{\rm(x')}(x),f_{\rm(x')}(x'))\le \kappa'''|\gm(x)-\gm(x')|^{\ga}.$$
Finally, 
$$|\vmo(x)-\vmo(x')|\le \wh\kappa |\gm(x)-\gm(x'|^{\ga}$$
holds for some $\wh\kappa$ independent of $x$ and $x'$ and if we adjust the magnitude of $\ga$.

$\bullet$
Let us now give a  bound for $|B(\FM(x))-B(\FM(x')|$. For simplicity we set $r=\rm(x)\le \rm(x')=:r'$ and $\DR:=r'-r$. 
$T$ and $\eps$ are parameter. On figure \ref{Fig-Bhold} $y$, $y'$, $z$ and $z'$ stand for $\FM(x)$, $\FM(x')$, $\gm(x)$ and $\gm(x')$. 
Note that by construction of $\S$, $y$ and $z$ on one hand, $y'$ and $z'$ on the other hand are in the same strong stable leaf (for the flow). Furthermore $z$ and $z'$ are in the same strong unstable leaf (the transversal $\CT$) but $y$ and $y'$ do not necessarily lie in the same unstable leaf. 

\begin{figure}[htbp]
\begin{center}
\includegraphics[scale=0.5]{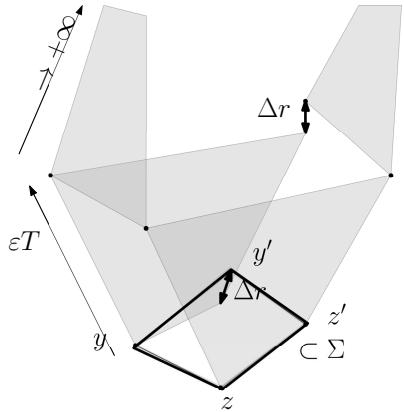}
\caption{Cocycles to get H\"older regularity for $B$}
\label{Fig-Bhold}
\end{center}
\end{figure}


\begin{eqnarray}
B(\FM(x)(x))-B(\FM(x'))&=& \int_{0}^{+\8}V(f_{t}(\FM(x)))-V(f_{t}(\gm(x)))dt\nonumber\\
&& \hskip 3cm  -\int_{0}^{+\8}V(f_{t}(\FM(x')))-V(f_{t}(\gm(x')))dt\nonumber\\
&=&  \int_{\eps.T}^{+\8}V(f_{t}(\FM(x)))-V(f_{t}(\gm(x)))dt\label{bound1}\\
&& \hskip 3cm -\int_{\eps T}^{+\8}V(f_{t}(\FM(x')))-V(f_{t}(\gm(x')))dt\label{bound2}\\
&&+\int_{0}^{\eps.T}V(f_{t}\circ f_{\rm(x)}(x))-V(f_{t}\circ f_{\rm(x)}(x'))dt\label{bound3}\\
&&-\int_{0}^{\eps.T}V(f_{t}\circ\gm(x))-V(f_{t}(\gm(x'))dt\label{bound4}\\
&& +\int_{-\DR}^{0}V(f_{t}(\FM)(x'))dt-\int_{\eps T}^{\eps T+\DR}V(f_{t}(\FM(x')))dt\label{bound5}.
\end{eqnarray}
The summand line \eqref{bound5} is easily bounded by $C.\DR$. 
Summands lines \eqref{bound1} and \eqref{bound2} are exponentially small in $T$ because of exponential contractions in the strong stable leaves. 
Summands in lines \eqref{bound3} and \eqref{bound4} are more difficult to deal with. For that we use that $V$ is $\al$-H\"older continuous. We set $\l'':=\log||Df_{1}||$. 

The distance between $f_{t}\circ f_{\rm(x)}(x)$ and $f_{t}\circ f_{\rm(x)}(x')$   increases exponentially fast in $t$. Therefore, for a fixed $\delta>0$, there is a time $T=T(x,x')$ such that 
$$d_{u}(f_{T}\circ f_{\rm(x)}(x),f_{T}\circ f_{\rm(x)}(x'))=\delta$$
holds, where $d_{u}$ means the distance along unstable leaves. On the other hand, expansion along unstable leaves is bounded by the norm of $Df$, and then, there exists a positive  number $\l'$ such that 
\begin{equation}
\label{eq-expanswu}
e^{\l'T}d_{u}(f_{\rm(x)}(x),f_{\rm(x)}(x'))\le \delta\le e^{\l''T}d_{u}(f_{\rm(x)}(x),f_{\rm(x)}(x')).
\end{equation}

Then, we pick $\eps$ such that $\eps\l''<\frac{\l'}2$. With these values  we get: 
\begin{eqnarray*}
|V(f_{t}\circ f_{\rm(x)}(x))-V(f_{t}\circ f_{\rm(x)}(x'))|&\le &C. \left(e^{\l''.t}d(f_{r}(x),f_{r}(x'))\right)^{\al}\\
\textrm{thus}&&\\
 \left|\int_{0}^{\eps.T}V(f_{t}\circ f_{\rm(x)}(x))-V(f_{t}\circ f_{\rm(x)}(x'))dt\right|&\le& \frac1{\al\l''}e^{\al\l''\eps T}d^{\al}(f_{r}(x),f_{r}(x'))\\
&\le&  \frac1{\al\l''}e^{\disp\frac{\disp \al\l''\eps}{\disp\l'} \l' T}d^{\al}(f_{r}(x),f_{r}(x'))\\
&\hskip -4cm\le&\hskip -2cm \frac1{\al\l''}\left(\frac\delta{d(f_{r}(x),f_{r}(x'))}\right)^{\frac{\disp \al\l''\eps}{\disp \l'} }d^{\al}(f_{r}(x),f_{r}(x')\\
&\le &\frac{\delta^{\frac{\disp \al\l''\eps}{\disp \l'}}}{\al\l''}\left(d(f_{r}(x),f_{r}(x'))\right)^{\disp\al(1-\frac{\disp \l''\eps}{ \l'})}.
\end{eqnarray*}
Now, remember the bi-H\"older relation between $\disp d(f_{r}(x),f_{r}(x'))$ and $d(\gm(x),\gm(x'))$. H\"older regularity for stable holonomy yields the same  kind of bound for the summand line \eqref{bound4}. 
\end{proof}

\subsection{From local to global Equilibrium State}
If $\wh W$ is a Borel function defined on $\CM$ we can study 
\begin{equation}
\label{equ-localequilstat}
\sup_{\mu\  \FM-inv}\left\{h_{\mu}(\FM)+\int \wh W\,d\mu\right\}.
\end{equation}
\begin{definition}
\label{def-localequil}
Any $\FM$-invariant probabilty which realizes the maximum in \eqref{equ-localequilstat} is called a local equilibrium state for $\wh W$. 
\end{definition}

If $\wh B$ is a Borel function and $\mu$ is $\FM$-invariant, if furthermore $\wh B$ belongs to $L^{1}(\mu)$, then 
$$\int \wh B\circ \FM\,d\mu=\int\wh B\,d\mu.$$

From this, we claim that it makes sense to study equilibrium state for the induced system $(\I, \gm)$ and a potentiel of the form $W-Z.\rm$, where $Z$ is a real parameter. Here, we present how we can deduce existence and uniqueness of a regular global equilibrium state from the existence of a local equilibrium state. Most of the ideas are from \cite{lep-survey}

\begin{theorem}
\label{theo-loctoglob}
Let $V$ be a $\al$-H\"older continuous potential. Set $\CP:=\max\left\{h_{\mu}(f_{1})+\int V\,d\mu\right\}$. Then, if $\mu$ is a $\gm$-invariant probability measure  satisfying:
\begin{enumerate}
\item $\disp \int\rm\,d\mu<+\8$,
\item $\mu$ satisfies $\disp h_{\mu}(\gm)+\int W-\CP. \rm\,d\mu=0$,
\end{enumerate}
then, there exists $\wh\mu$- which is $f$-invariant satisfying 
$$d\wh\mu\propto d\mu^{\CM}\otimes dt,$$
where $\mu^{\CM}$ is $\FM$-invariant and ${\TCT}_{*}\mu^{\CM}=\mu$, and moreover, $\wh\mu$ is an equilibrium state for $V$
\end{theorem}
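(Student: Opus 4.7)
The plan is to proceed in three stages: first construct the $\FM$-invariant lift $\mu^{\CM}$ on the mille-feuilles, then suspend it under the return-time $r_{\CM}$ to get $\wh\mu$, and finally compare free energies via Abramov's formula.

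\textbf{Step 1: Lift $\mu$ to an $\FM$-invariant measure $\mu^{\CM}$ on $\CM$.} Because the mille-feuilles has a skew-product structure (Prop.~\ref{prop-Markov return}) with base map $\gm$ on $\CT$ and fibers given by the strong-stable Cantor sets $W^{ss}_{\CM}$, and because $\FM$ contracts the $W^{ss}_{\CM}$-direction uniformly (uniform contraction of $f_t$ in $E^{ss}$ composed with large return times), for every $x\in\CT$ the inverse branch along the stable direction selects a unique point in each fiber, making the projection $\TCT:(\CM,\FM)\to(\CT,\gm)$ \emph{invertible along stable leaves} in the measure-theoretic sense. Explicitly, I would build $\mu^{\CM}$ as the weak-$*$ limit of $\frac{1}{n}\sum_{k=0}^{n-1}\FM^{k}_{*}(\TCT^{*}\mu)$ evaluated on a reference transversal measure, or equivalently by pushing $\mu$ along the unique backward orbit inside each stable leaf. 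The resulting $\mu^{\CM}$ is $\FM$-invariant and satisfies $\TCT_{*}\mu^{\CM}=\mu$. Moreover, by Ledrappier--Walters / Rokhlin's formula applied to the contracting skew product, no entropy is carried on the stable fibers, and hence
\[
h_{\mu^{\CM}}(\FM)=h_{\mu}(\gm).
\]

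\textbf{Step 2: Suspend to produce $\wh\mu$.} Define
\[
\wh\mu:=\frac{1}{\int r_{\CM}\,d\mu^{\CM}}\;\mu^{\CM}\otimes\Leb|_{[0,r_{\CM}(\cdot)]},
\]
which is a finite probability measure by hypothesis $(1)$, invariant under the flow $f_t$ by the standard suspension construction (see \cite{CFS}), and in particular $f_{1}$-invariant. Since $r_{\CM}$ is constant along strong stable fibers (it is the return time in $\CM$ of the flow, defined via $\TCT$), we have $\int r_{\CM}\,d\mu^{\CM}=\int r_{\CM}\,d\mu<\infty$.

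\textbf{Step 3: Compute free energies via Abramov.} Abramov's formula for the suspension gives
\[
h_{\wh\mu}(f_{1})=\frac{h_{\mu^{\CM}}(\FM)}{\int r_{\CM}\,d\mu^{\CM}},\qquad
\int V\,d\wh\mu=\frac{\int \vmo\,d\mu^{\CM}}{\int r_{\CM}\,d\mu^{\CM}},
\]
because by definition $\vmo(x)=\int_{0}^{r_{\CM}(x)}V(f_{t}(x))\,dt$. Using Lemma~\ref{lem-coboundaryA}, $\vmo=W\circ\TCT+B-B\circ\FM$. Remark~\ref{rem-bbounded} ensures $B$ is bounded, so $\int(B-B\circ\FM)\,d\mu^{\CM}=0$ by $\FM$-invariance, and hence $\int \vmo\,d\mu^{\CM}=\int W\,d\mu$. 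Combining with Step 1,
\begin{align*}
h_{\wh\mu}(f_{1})+\int V\,d\wh\mu
&=\frac{h_{\mu^{\CM}}(\FM)+\int \vmo\,d\mu^{\CM}}{\int r_{\CM}\,d\mu^{\CM}}\\
&=\frac{h_{\mu}(\gm)+\int W\,d\mu}{\int r_{\CM}\,d\mu}
=\CP,
\end{align*}
where the last equality is exactly hypothesis $(2)$. Since $\CP$ is by definition the pressure of $V$, $\wh\mu$ is an equilibrium state.

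\textbf{Main obstacle.} The delicate point is \emph{not} the formal Abramov computation, which is essentially bookkeeping, but rather Step 1: producing the $\FM$-invariant lift $\mu^{\CM}$ with the correct entropy identity $h_{\mu^{\CM}}(\FM)=h_{\mu}(\gm)$. One must justify that on the Cantor-fibered space $\CM$ the strong stable direction is a measurable partition contracted by $\FM$, so that the conditional entropy of $\FM$ with respect to the pullback of the Borel $\sigma$-algebra of $\CT$ vanishes; this relies on uniform contraction of $f_t$ in $E^{ss}$ together with the lower bound on return times. Once this is established and combined with the H\"older control of $W$ coming from Prop.~\ref{prop-regularity} (which ensures $\int W\,d\mu$ makes sense and the coboundary manipulation of $B$ is legitimate under the integrability assumption $\int r_{\CM}\,d\mu<\infty$), the rest of the proof is a direct computation.
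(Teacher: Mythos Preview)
Your proof is correct and follows the same three-step skeleton as the paper: lift $\mu$ to $\CM$, suspend under $r_{\CM}$, then apply Abramov together with the coboundary identity of Lemma~\ref{lem-coboundaryA} (bounded $B$, hence $\int\vmo\,d\mu^{\CM}=\int W\,d\mu$).

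The only point worth commenting on is your Step~1, which you flag as the main obstacle. The paper handles it in one line by observing that $(\CM,\FM)$ is a geometric realization of the \emph{natural extension} of $(\I,\gm)$: $\FM$ expands along the horizontal $(\S,u)$-curves and contracts along the vertical $W^{ss}_{\CM}$-fibers, so the abstract natural-extension measure of $\mu$ is carried by $\CM$ and projects back to $\mu$. Since the natural extension always preserves entropy, the identity $h_{\mu^{\CM}}(\FM)=h_{\mu}(\gm)$ is automatic and there is nothing to prove. Your route via Ces\`aro limits and a Rokhlin/Ledrappier--Walters argument that stable fibers carry no conditional entropy reaches the same conclusion, but it is heavier machinery for a fact that the natural-extension viewpoint gives for free. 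After that, the Abramov computation in your Step~3 matches the paper's exactly.
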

\begin{proof}
The natural extension of $\mu$ can be seen as a $\FM$-invariant probability. This holds because $\CM$ is compact, $\FM$ expands in the ``horizontal'' $u$-direction (a consequence of the GALEO property) and contracts in the vertical $s$-direction. 

Let us denote this measure by $\mu^{\CM}$. By definition, ${\TCT}_{*}\mu^{\CM}=\mu$ holds. 
Then, $\disp \int\rm\,d\mu<+\8$ shows that we can find some $f$-invariant probability measure $\wh\mu$ satisfying 
$$d\wh\mu\propto d\mu^{\CM}\otimes dt.$$

Consequently, the flow is a suspended flow over $\CM$ with roof function $\rm$. Note that our first assumption yields that $\rm$ is in $L^{1}(\mu^{\CM})$. Moreover, $B$ is bounded, due to uniform contractions along the stable leaves. 
This shows that 
$$\int V_{\CM,\CP}\,d\mu^{\CM}=\int W-\CP\rm\,d\mu^{\CM}$$
holds. 

Our second assumption yields (using the Abramov Formula)
$$h_{\wh\mu}(f_{1})+\int V\,d\wh\mu-\CP=\wh\mu(\cup_{t\in[0,1]}f_{t}(\CM))\left(h_{\mu^{\CM}}(\FM)+\int \vmp\,d\mu^{\CM}\right)=0,$$
and then $\wh\mu$ is an equilibrium state for $V$. 
\end{proof}
\begin{remark}
\label{rem-pressneg}
The same computation based on Abramov formula shows that if $\wh\nu$ is $f_{1}$-invariant, if $\CM$ has positive* $\wh\nu$-measure and  if $\nu$ is the $\FM$-invariant probability such that 
$$\wh\nu\propto\nu\otimes dt$$
then, 
$$h_{\nu}(\FM)+\int W-\CP\rm\,d\nu\le 0,$$
with equality if and only if $\wh\nu$ is an equilibrium state (+ $\CM$ has positive* $\wh\nu$-measure). 
$\blacksquare$\end{remark}

\subsection{Local equilibrium states}

From now on, our goal is to study local equilibrium states for $(\CM,\FM)$ associated to potential  $W-\CP\rm$. For that purpose, we follow the method from \cite{lep-survey}  and introduced another parameter $Z$. 

Instead of studying local equilibrium states for $W-\rm\CP$ we will study local equilibrium states for $W-Z\rm$, where $Z$ is a real parameter. Actually, we will show that this can be done for any sufficiently large $Z$, say $Z> Z_{c}$. The main problem we will have to deal with, is that it is not a priori true that $Z_{c}<\CP$ holds. This will come as a consequence of the existence of a global regular equilibrium state. 

\subsubsection{Induced Transfer Operator}

\begin{definition}
For any $Z\in{\mathbb R}$, we define the linear operator in the following way: for any continuous function $\varphi$ on $\I$, for any $x\in\I$, we set
\begin{eqnarray*}
\Fi&\mapsto& \CL_{Z}(\Fi)\\
&&x\mapsto \CL_{Z}(\varphi)(x)=\sum_{y\in g_\CM^{-1}(x)} e^{W(y)-Zr_\CM(y)}\varphi(y).
\end{eqnarray*}
\end{definition}
For a fixed $\Fi$ and a fixed $x$, $\CL_{Z}(\Fi)(x)$ is not a true power series because return times are not necessarily integers. Nevertheless it behaves in spirit like a power series.
The first point is to make sure it is well defined. This is the purpose of next proposition. 

\begin{proposition}\label{Pro:convergence}
There exists $Z_{c}$ such that for every $Z>Z_{c}$, for every $\Fi$ and for every $x$, $\CL_{Z}(\Fi)(x)$ is well defines (converges) and for every $Z<Z_{c}$ and for every $x$, $\CL_{Z}(\BBone)(x)$ diverges. 
\end{proposition}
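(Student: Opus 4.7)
The plan is to treat $\CL_Z(\BBone)(x)$ as a generalized Dirichlet series indexed by the (countable) collection of 1-cylinders and to identify $Z_c$ as its abscissa of convergence. First I would enumerate the 1-cylinders as $\{C_i\}_{i \in I}$ with inverse branches $h_i := (\gm|_{C_i})^{-1} : \I \to C_i$, so that
$$\CL_Z(\varphi)(x) = \sum_{i \in I} e^{W(h_i(x)) - Z\,\rm(h_i(x))}\,\varphi(h_i(x)).$$
Next I would show that the convergence or divergence of $\CL_Z(\BBone)(x)$ does not depend on the base point $x$. For any $x, x' \in \I$ and any index $i$, the preimages $h_i(x)$ and $h_i(x')$ lie in the same 1-cylinder $C_i$, so applying Propositions \ref{prop-returndynlip} and \ref{prop-regularity} with $\gm(h_i(x)) = x$, $\gm(h_i(x')) = x'$ gives
$$|\rm(h_i(x)) - \rm(h_i(x'))| \le \kappa\,d_\CT(x, x')^\ga, \qquad |W(h_i(x)) - W(h_i(x'))| \le C\,d_\CT(x, x')^\ga$$
uniformly in $i$. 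Since $d_\CT(x, x')$ is bounded by the length $L$ of $\I$, this yields a two-sided comparison $e^{-M_Z}\CL_Z(\BBone)(x') \le \CL_Z(\BBone)(x) \le e^{M_Z}\CL_Z(\BBone)(x')$ with $M_Z := (C + |Z|\kappa)L^\ga$ finite for every real $Z$.

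Then I would establish monotonicity in $Z$: since $\rm$ is a first-return time and hence bounded below by some $R > 0$, the map $Z \mapsto e^{-Z\,\rm(y)}$ is non-increasing for each $y$, and so is $Z \mapsto \CL_Z(\BBone)(x)$. Setting
$$Z_c := \inf\bigl\{Z \in \R : \CL_Z(\BBone)(x_0) < +\infty\bigr\}$$
for any fixed $x_0 \in \I$ (independent of $x_0$ by the previous step), one obtains $\CL_Z(\BBone)(x) < +\infty$ for every $x$ whenever $Z > Z_c$ and $\CL_Z(\BBone)(x) = +\infty$ for every $x$ whenever $Z < Z_c$. For continuous $\varphi$ on the compact set $\I$, $\|\varphi\|_\infty < +\infty$, so $|\CL_Z(\varphi)(x)| \le \|\varphi\|_\infty\,\CL_Z(\BBone)(x) < +\infty$ for $Z > Z_c$, which gives the full statement.

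The hard part is really packed into the preceding propositions: the argument hinges on the uniform-over-all-1-cylinders dynamical H\"older control of the exponent $W - Z\rm$. This is precisely what the construction of $\CM$ from $\CP\CM$ was designed to ensure, through the removal of bands of type $\beta$ along which the distortion of the return times (and of $W$) could not be controlled. Note that the proposition does not claim that $Z_c$ is a finite real number; if finiteness is needed later it would require an additional exponential upper bound on the combinatorics of return times, ultimately provided by the GALEO property and the sectional expansion.
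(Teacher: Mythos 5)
Your argument is correct and follows the same essential route as the paper: the heart of both proofs is the base-point-independence step, obtained exactly as you do it by pairing preimages within each $1$-cylinder and invoking Propositions \ref{prop-returndynlip} and \ref{prop-regularity} (this is the paper's Lemma \ref{lem-Zcunif}), together with the reduction from general continuous $\Fi$ to $\BBone$ by positivity and compactness. The one place you diverge is in how the critical parameter is extracted: you use monotonicity of $Z\mapsto\CL_{Z}(\BBone)(x)$ (valid since $\rm$ is bounded below by a positive constant) and define $Z_{c}$ as an infimum, whereas the paper groups the preimages by $\lfloor\rm(y)\rfloor$ and reads off the dichotomy from the abscissa of convergence of a genuine power series, which yields the explicit formula $Z_{c}=\limsup_{n}\frac1n\log\bigl(\sum_{\lfloor\rm(y)\rfloor=n}e^{W(y)}\bigr)$. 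Your version is marginally more elementary but loses this formula, which is not needed for the proposition itself yet is used crucially later (in Proposition \ref{prop-controlZc}) to prove $Z_{c}\le\CP$ by building the approximating periodic-orbit measures $\mu_{n}$; so if you adopted your route you would have to re-derive the formula there. Your closing caveat that the statement does not assert finiteness of $Z_{c}$ is accurate and matches the paper, which likewise does not address it at this stage.
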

\begin{lemma}
\label{lem-Zcunif}
If $\CL_{Z}(x)$ converges for some $x\in\I$ then  $\CL_{Z}(x')$ converges for any $x'\in\I$. 
\end{lemma}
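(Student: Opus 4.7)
The plan is to show that the two series $\CL_Z(\BBone)(x)$ and $\CL_Z(\BBone)(x')$ are comparable term by term, up to a uniform multiplicative constant, so that convergence at one point forces convergence at any other point.

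First I would recall that each $1$-cylinder $C \subset \I$ is mapped bijectively onto $\I$ by $g_\CM$ (this is built into the construction of the mille-feuilles: $g_\CM$ is a full-branch Markov map whose inverse branches are indexed by $1$-cylinders). Hence the preimage set $g_\CM^{-1}(x)$ is in canonical bijection with the collection of $1$-cylinders: to each $1$-cylinder $C$ is associated exactly one $y_C(x) \in C$ with $g_\CM(y_C(x)) = x$, and likewise one $y_C(x')$. In particular, $y_C(x)$ and $y_C(x')$ lie in the same $1$-cylinder, and their return times $r_\CM$ are equal (the return time is constant on each $1$-cylinder, cf.\ Remark \ref{rem-returncylinder}).

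Next I would exploit the two regularity estimates already established. Since $y_C(x)$ and $y_C(x')$ belong to the same $1$-cylinder, Proposition \ref{prop-regularity} yields
\[
|W(y_C(x)) - W(y_C(x'))| \le C |g_\CM(y_C(x)) - g_\CM(y_C(x'))|^\gamma = C|x - x'|^\gamma \le C L^\gamma,
\]
where $L$ is the length of $\I$. (Actually, since $r_\CM$ is constant on the cylinder $C$, the $-Z r_\CM$ contributions cancel exactly.) Consequently, for every $1$-cylinder $C$,
\[
\bigl| (W(y_C(x)) - Z r_\CM(y_C(x))) - (W(y_C(x')) - Z r_\CM(y_C(x'))) \bigr| \le C L^\gamma =: K,
\]
a constant independent of $C$, $x$, and $x'$.

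This yields the uniform comparison
\[
e^{-K} \, e^{W(y_C(x'))-Zr_\CM(y_C(x'))} \le e^{W(y_C(x))-Zr_\CM(y_C(x))} \le e^{K}\, e^{W(y_C(x'))-Zr_\CM(y_C(x'))}.
\]
Summing over all $1$-cylinders $C$, we get $e^{-K}\CL_Z(\BBone)(x') \le \CL_Z(\BBone)(x) \le e^{K}\CL_Z(\BBone)(x')$. Therefore convergence at $x$ is equivalent to convergence at $x'$. The only subtlety to watch is that the Hölder estimate of Prop. \ref{prop-regularity} really does hold uniformly over all $1$-cylinders with the same constant $C$; this is guaranteed because the estimate was obtained by combining Observations \ref{obs-deltadistoelig2} and \ref{obs-expan}, whose constants depend only on $\kappa_1, \kappa_2, \ko$ and the minimal return time $R$, not on the particular $1$-cylinder.
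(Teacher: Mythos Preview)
Your argument follows the same strategy as the paper's proof, but there is a genuine error in one step. You claim that the roof function $r_\CM$ is constant on each $1$-cylinder, citing Remark~\ref{rem-returncylinder}. That remark, however, concerns $\tau_\CM$, the \emph{discrete} return time to $\CM$ counted in iterations of $F_\Sigma$; it is $\tau_\CM$ that is constant on a $1$-cylinder. The \emph{continuous} roof function $r_\CM$ (defined by $F_\CM(x)=f_{r_\CM(x)}(x)$) is \emph{not} constant on $1$-cylinders: two points $y_C(x)$ and $y_C(x')$ in the same cylinder flow for slightly different amounts of time before landing back in $\CM$. Indeed, the whole purpose of Proposition~\ref{prop-returndynlip} is precisely to control this variation, proving that $r_\CM$ is dynamically H\"older rather than locally constant.

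The fix is immediate and brings you back to the paper's argument: invoke Proposition~\ref{prop-returndynlip} alongside Proposition~\ref{prop-regularity} to obtain
\[
\bigl| W(y_C(x)) - W(y_C(x')) \bigr| + |Z|\,\bigl| r_\CM(y_C(x)) - r_\CM(y_C(x')) \bigr| \le K(1+|Z|),
\]
with $K$ independent of the cylinder $C$. The resulting comparison constant is then $e^{\pm K(1+|Z|)}$ rather than $e^{\pm K}$, exactly as in the paper. Everything else in your write-up is correct.
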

\begin{proof}[Proof of the Lemma]
By construction of $\CM$ (we refer here to the Markov property), every 1-cylinder of $\I$ contains exactly one preimage of any point in $\I$. If $x$ and $x'$ are in $\I$, we can associate by pair the preimages $y$ and $y'$ in the same1-cylinder. 

Then, Propositions \ref{prop-regularity} and \ref{prop-returndynlip} show that 
$$|W(y)-W(y')+Z(\rm(y)-\rm(y'))|\le K(1+|Z|)$$
for some universal constant $K$. 
This yields 
$$e^{-K(1+|Z|)}\le \frac{e^{W(y)-Z\rm(y)}}{e^{W(y')-Z\rm(y')}}\le e^{K(1+|Z|)},$$
and then $\CL_{Z}(\BBone)(x)\asymp e^{\pm K(1+|Z|)}\CL_{Z}(\BBone)(x')$. If one term converges, so does the other one. 
\end{proof}
\begin{proof}[Proof of Prop. \ref{Pro:convergence}]
Note that $\CL_{Z}$ is a positive operator and $\I$ is compact. Therefore, convergence for every $\Fi$ continuous is equivalent to the convergence for $\BBone$. Then, Lemma \ref{lem-Zcunif} shows that this last convergence does not depend on the choice of the reference point $x$. 

Therefore, we can choose any $x$ and check for convergence for 
$$\CL_{Z}(\BBone)(x):=\sum_{y,\ \gm(y)=x}e^{W(y)-Z\rm(y)}.$$
If we order the $\rm(y)$'s with respect to their integer part $\lfloor \rm(y)\rfloor$, the same argument as  for the proof of Lemma \ref{lem-Zcunif} shows that $\CL_{Z}(\BBone)(y)$ converges if and only if 
$$\sum_{n=1}^{+\8}\left(\sum_{\lfloor\rm(y)\rfloor=n}e^{W(y)}\right)e^{-nZ}$$
converges. This later sum is a power series and converges for $Z>Z_{c}$ and diverges for $Z<Z_{c}$  with 
\begin{equation}
\label{eq-Zc}
Z_{c}:=\limsup_{\ninf}\frac1n\log\left(\sum_{\lfloor\rm(y)\rfloor=n}e^{W(y)}\right).
\end{equation}
\end{proof}

\bigskip
Let $\ga$ be as in Prop. \ref{prop-regularity}. We recall that the $\ga$-H\"older norm is defined by 
$$||\Fi||_{\ga}=||\Fi||_{\8}+\sup_{x\neq y}\frac{|\Fi(x)-\Fi(y)|}{|x-y|^{\ga}}.$$

\subsubsection{Spectral decomposition for converging $\CL_{Z}$}

\begin{proposition}
\label{prop-doeblin}
If $\CL_{Z}(\BBone)(x)<+\8$ holds for some $x$, then, 
\begin{enumerate}
\item $\CL_{Z}$ acts on continuous functions. 

We denote by $\l_{Z}$ its spectral radius (on $\CC^{0}(\I)$) and set $\wt\CL_{Z}:=\disp\frac1{\l_{Z}}\CL_{Z}$. 
\item There exists $K=K(Z)$ such that for every $n$, for every $x$ and $y$ in $\I$, 
\begin{equation}
\label{equ-bowen}
e^{-K}\le \frac{\CL^{n}_{Z}(\BBone)(x)}{\CL_{Z}^{n}(\BBone)(y)} \le e^{K}. 
\end{equation}
If $Z$ stays in a compact, then $K(Z)$ can be chosen uniformly. 
\item $\wt\CL_{Z}$ acts on $\ga$-H\"older continuous functions $\CC^{\ga}(\I)$. 
\item $\wt\CL_{Z}$ satisfies the Doeblin-Fortet inequality on $\CC^{\ga}(\I)$:

there exist $0<a<1$ and $0<b$ such that 
$$\forall n,\ \forall \chi\in \CC^{\ga}(\I),\qquad ||\wt\CL_{Z}^{n+n_{0}}(\chi)||_{\ga}\le a||\chi||_{\ga}+b||\chi||_{\8}.$$
\end{enumerate}
\end{proposition}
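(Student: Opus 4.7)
The plan is to run the standard Ruelle--Sarig--Bowen transfer operator argument, with the two key technical inputs being Proposition \ref{prop-returndynlip} (the return time is dynamically Hölder) and Proposition \ref{prop-regularity} (so is $W$), together with the exponential contraction of inverse branches of $g_{\CM}$ recorded in \eqref{eq-contraT}. These combine to say that the Birkhoff sums $S_n(W-Zr_\CM)$ of the induced potential are dynamically Hölder, with constants uniform in $n$ and locally uniform in $Z$.

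For item (1), once convergence at a single point is known, the argument already used in Lemma \ref{lem-Zcunif}---matching preimages inside the same 1-cylinder and comparing weights---shows that the series $\CL_Z\Fi(x)$ converges uniformly in $x$ for any continuous $\Fi$. Each term is continuous in $x$ (inverse branches of $g_\CM$, $W$, and $r_\CM$ all are), hence so is the limit. For item (2), I would iterate the same trick: for each $x,y\in\I$ and each $n$, the Markov property (Prop. \ref{prop-Markov return}) gives a bijection between $g_\CM^{-n}(x)$ and $g_\CM^{-n}(y)$ by matching companion preimages $x_n,y_n$ inside the same $n$-cylinder $C_n$. The dynamical Hölder property of $W-Zr_\CM$ bounds $|S_n(W-Zr_\CM)(x_n)-S_n(W-Zr_\CM)(y_n)|$ by $K(Z)|x-y|^\gamma\le K(Z)\,\mathrm{diam}(\I)^\gamma$, uniformly in $n$, so the termwise ratio of weights lies in $[e^{-K},e^K]$. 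Summing yields the Bowen inequality \eqref{equ-bowen}. Local uniformity in $Z$ is immediate from the explicit form of the estimate. This also produces a well-defined spectral radius $\lambda_Z$ and ensures $\wt\CL_Z^n\BBone\asymp 1$ uniformly.

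For item (3), take $\chi\in\CC^\gamma(\I)$ and two points $x,y\in\I$; writing
\[
\wt\CL_Z\chi(x)-\wt\CL_Z\chi(y)=\sum_{\text{1-cyl}}\frac{e^{W(x_1)-Zr_\CM(x_1)}}{\lambda_Z}\bigl(\chi(x_1)-\chi(y_1)\bigr)+\sum_{\text{1-cyl}}\frac{\chi(y_1)}{\lambda_Z}\bigl(e^{W(x_1)-Zr_\CM(x_1)}-e^{W(y_1)-Zr_\CM(y_1)}\bigr),
\]
the first sum is controlled by $\|\chi\|_\gamma\,e^{-\gamma R}|x-y|^\gamma\cdot\wt\CL_Z\BBone(x)$ using \eqref{eq-contraT}, and the second by $\|\chi\|_\infty\,|x-y|^\gamma$ using the mean value theorem on $e^{t}$ together with Propositions \ref{prop-regularity} and \ref{prop-returndynlip}. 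So $\|\wt\CL_Z\chi\|_\gamma\le C\|\chi\|_\gamma$, and $\wt\CL_Z$ preserves $\CC^\gamma(\I)$.

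Item (4) is the only step where real work is required. I would use the same decomposition as above, but applied to $\wt\CL_Z^n$:
\[
\wt\CL_Z^n\chi(x)-\wt\CL_Z^n\chi(y)=\sum_{C_n}\frac{e^{S_n(W-Zr_\CM)(x_n)}}{\lambda_Z^n}\bigl(\chi(x_n)-\chi(y_n)\bigr)+\sum_{C_n}\frac{\chi(y_n)}{\lambda_Z^n}\bigl(e^{S_n(W-Zr_\CM)(x_n)}-e^{S_n(W-Zr_\CM)(y_n)}\bigr).
\]
The first sum is estimated by the Hölder norm of $\chi$ times the contraction factor $e^{-\gamma R n}|x-y|^\gamma$ coming from Observation \ref{obs-expan}, times $\wt\CL_Z^n\BBone(x)$ which is uniformly bounded by part (2); this yields a term of the form $a_n\|\chi\|_\gamma|x-y|^\gamma$ with $a_n\to 0$. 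For the second sum I apply the mean value theorem to $e^t$ and use the dynamical Hölder bound on $S_n(W-Zr_\CM)$ (whose constant is \emph{independent of $n$}), factoring out $e^{S_n(W-Zr_\CM)(y_n)}/\lambda_Z^n$ and then resumming thanks to part (2); this produces a term of the form $b\|\chi\|_\infty|x-y|^\gamma$ with $b$ independent of $n$. Combining with the trivial sup bound $\|\wt\CL_Z^n\chi\|_\infty\le C\|\chi\|_\infty$, and choosing $n_0$ so that $a_n<1$ for $n\ge n_0$, gives the desired Doeblin--Fortet inequality.

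The main obstacle is keeping track that the constants in Step 5 do not blow up with $n$: this is exactly where the uniform Bowen distortion estimate of part (2) is essential, since it alone guarantees that after normalization by $\lambda_Z^n$ the sums $\sum_{C_n} e^{S_n(W-Zr_\CM)(y_n)}/\lambda_Z^n$ stay bounded, which is what decouples the ``distortion'' and ``contraction'' contributions and lets the contraction factor $e^{-\gamma R n}$ win for large $n$.
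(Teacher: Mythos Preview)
Your proposal is correct and follows exactly the approach the paper sketches. The paper's own proof is only a few lines long: it calls the argument ``standard computations involving $\CC^{\ga}(\I)$ and $\CC^{0}(\I)$'', names Propositions~\ref{prop-regularity} and~\ref{prop-returndynlip} as the key inputs for item~(2), notes that $K(Z)$ is affine in $Z$ (hence locally uniform), and identifies the uniform contraction of inverse branches of $g_{\CM}$ from Observation~\ref{obs-expan} as the mechanism yielding $a<1$. Your write-up is precisely the fleshed-out version of this sketch, including the standard two-term decomposition of $\wt\CL_Z^n\chi(x)-\wt\CL_Z^n\chi(y)$ and the observation that the Bowen bound~(2) is what keeps the normalized sums $\sum_{C_n} e^{S_n(W-Zr_\CM)}/\lambda_Z^n$ under control uniformly in $n$. (Minor slip: you refer to ``Step 5'' in the last paragraph where you mean item~(4).)
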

\begin{proof}
These are standard computations involving $\CC^{\ga}(\I)$ and $\CC^{0}(\I)$. The key elements are Propositions \ref{prop-regularity} and \ref{prop-returndynlip}. Item $(2)$ is a direct consequence of these two propositions.  Proposition \ref{prop-returndynlip}  and compacteness for $\CT$ show that the quantity $K(Z)$ can be chosen uniformly if $Z$ stays in a compact set because $K(Z)$ is affine in $Z$ (variation is due to variation of $\rm^{n}$ in $n$-cylinders).

The last key point (to get $a<1$) is the uniform contraction in the horizontal direction for inverse branches of $\gm$. Actually, this follows from Observation \ref{obs-expan}. 
\end{proof}

We recall the Ionescu-Tulcea \& Marinescu theorem (see \cite{itm} and \cite{broise} for a proof adapted to Dynamical Systems).  We let the reader check that all assumptions hold in our case with $\CC^{\ga}(\I)\subset\CC^{0}(\I)$ and $\wt\CL_{Z}$ (with $\CL_{Z}(\BBone)(x)<+\8$ for some $x$). Items $(i)$ and $(v)$ follow from compactness of the unitary ball for $\CC^{\ga}(\I)$ into $\CC^{0}(\I)$. Items $(ii), (iii), (iv)$ follow from Prop. \ref{prop-doeblin}. 

\begin{theorem}[Ionescu-Tulcea \& Marinescu]\label{pajot1}
Let $(\CV,\Vert\;\Vert_\CV)$ and $(\CU,\Vert\;\Vert_\CU)$ be two ${\C}$-Banach 
spaces
such that $\CV\subset\CU$. We assume that 
\begin{itemize}
\item[(i)] if $(\varphi_n)_{n\in{\N}}$ is a sequence of functions in $\CV$
which converges in $(\CU,\Vert\;\Vert_\CU)$ to a function $\varphi$ and 
if for all $n\in{\N}$, $\Vert\varphi_n\Vert_{\CV}\leq C$ then \hbox{$\varphi
\in \CV$} and \hbox{$\Vert\varphi\Vert_\CV\leq C$},
\end{itemize}
and $\Phi$ an operator from $\CU$ to itself such that 
\begin{itemize}
\item[(ii)] $\Phi$ lets $\CV$ invariant and is bounded for
 $\Vert\;\Vert_\CV$ ;
\item[(iii)] $\sup_{n}\{ \Vert\Phi^{n}(\varphi)\Vert_\CU\;,\varphi\in\CV,\; 
\Vert\varphi\Vert_\CU\leq 1\}<+\8$ ;
\item[(iv)] there exists an integer $n_0$ and two constants $0<a<1$ and
$0\leq b<+\8$ such that for all $\varphi\in\CV$ we have 
\hbox{$\Vert\Phi^{n_0}(\varphi)\Vert_\CV\leq a\Vert\varphi\Vert_\CV+b
\Vert\varphi\Vert_\CU$ ;}
\item[(v)] if $\CX$ is a bounded subset of $(\CV,\Vert\;\Vert_\CV)$ then 
$\Phi^{n_0}(\CX)$ has compact closure in  
 $(\CU,\Vert\;\Vert_\CU)$.
\end{itemize}
Then $\Phi$ has a finite number of eigenvalues of norm 1 :
$\lambda_1\ldots\lambda_p$ and $\Phi$ can be written  
\hbox{$\Phi=\sum_{i=1}^p\lambda_i\Phi_i\;+\Psi$, } where the $\Phi_i$ are 
linear bounded operators from $\CV$ to $\Phi(\CV)$ of finite
dimension image contained in
 $\CV$, and where $\Psi$ is a linear bounded operator with spectral radius
 $\rho(\Psi)<1$ in $(\CV,\Vert\;\Vert_\CV)$.

\noindent
Moreover the following holds :
$\Phi_i.\Phi_j=\Phi_j.\Phi_i=0$ for all $i\not = j $, 
$\Phi_i.\Phi_i=\Phi_i$  for all $i$, and $\Phi_i.\Psi=\Psi.\Phi_i=0$
for all $i$.
\end{theorem}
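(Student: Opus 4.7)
The plan is to show that, under hypotheses (i)--(v), the operator $\Phi$ is \emph{quasi-compact} on $(\CV,\|\cdot\|_{\CV})$, i.e.\ its essential spectral radius is strictly less than $1$; the stated decomposition will then follow from the Riesz--Dunford holomorphic functional calculus applied to the finitely many spectral values of modulus $1$.

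The first step is a standard iteration of the Doeblin--Fortet inequality (iv). Writing $\Phi^{(k+1)n_0}\varphi=\Phi^{n_0}\bigl(\Phi^{kn_0}\varphi\bigr)$ and using (iii) together with (ii) to get a uniform bound $\|\Phi^{jn_0}\varphi\|_{\CU}\le M\|\varphi\|_{\CU}$ for all $j$ and all $\varphi\in\CV$, a straightforward induction gives
\begin{equation*}
\|\Phi^{kn_0}\varphi\|_{\CV}\;\le\;a^{k}\|\varphi\|_{\CV}+\frac{bM}{1-a}\|\varphi\|_{\CU},\qquad k\ge 1.
\end{equation*}
This is precisely the Lasota--Yorke/Hennion estimate. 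Combined with (v), which says that the image of any bounded subset of $\CV$ under $\Phi^{n_0}$ is relatively compact in $\CU$, it yields for every $k$ a decomposition $\Phi^{kn_0}=K_{k}+R_{k}$ on $\CV$ with $K_{k}$ compact and $\|R_{k}\|_{\CV\to\CV}\le a^{k}+o(1)$. Assumption (i) is used here to lift $\CU$-convergent subsequences to $\CV$-bounded limits, so that the compact summand $K_{k}$ genuinely acts on $\CV$. Nussbaum's formula then forces $r_{\mathrm{ess}}(\Phi)\le a^{1/n_0}<1$.

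Second, by hypothesis (iii) no eigenvalue can have modulus greater than $1$: an eigenvector $\Phi\varphi=\lambda\varphi$ with $|\lambda|>1$ would give $|\lambda|^{n}\|\varphi\|_{\CU}=\|\Phi^{n}\varphi\|_{\CU}$, which diverges, contradicting (iii). Combined with quasi-compactness, the spectrum of $\Phi$ in $\{|z|=1\}$ therefore consists of finitely many eigenvalues $\lambda_{1},\dots,\lambda_{p}$, each of finite algebraic multiplicity. For each $\lambda_{i}$ I define the Riesz projector
\begin{equation*}
\Phi_{i}\;:=\;\frac{1}{2\pi i}\oint_{\gamma_{i}}(z-\Phi)^{-1}\,dz,
\end{equation*}
where $\gamma_{i}$ is a small positively oriented loop around $\lambda_{i}$ isolating it from the rest of $\sigma(\Phi)$. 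Standard holomorphic functional calculus yields $\Phi_{i}\Phi_{j}=\delta_{ij}\Phi_{i}$, $\Phi_{i}\Phi=\Phi\Phi_{i}$, and finite rank of each $\Phi_{i}$ (inherited from the quasi-compactness, which makes $\lambda_{i}$ an isolated pole of the resolvent of finite order). Setting $\Psi:=\Phi-\sum_{i=1}^{p}\lambda_{i}\Phi_{i}$ and using that eigenvalues of modulus $1$ are semisimple under quasi-compactness plus the bound (iii), one obtains $\sigma(\Psi)\subset\{|z|<1\}$, and the orthogonality relations $\Phi_{i}\Psi=\Psi\Phi_{i}=0$ follow immediately from the contour defining $\Phi_{i}$.

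The main obstacle is the careful interplay between the two norms. Hypothesis (iii) is formulated only on $\CV$, so to iterate (iv) one must ensure that all intermediate vectors remain inside $\CV$; this is where (ii) is used. The more delicate point is converting the $\CU$-compactness supplied by (v) into genuine compactness on $(\CV,\|\cdot\|_{\CV})$, which is exactly what assumption (i) makes possible: without it one would only obtain quasi-compactness relative to the weaker norm $\CU$, and the spectral projectors would not enjoy the required invariance of $\CV$. Once quasi-compactness on $\CV$ is established, the remainder of the proof is a purely operator-theoretic contour argument.
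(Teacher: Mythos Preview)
The paper does not prove this theorem: it merely \emph{recalls} the Ionescu--Tulcea \& Marinescu theorem and refers the reader to \cite{itm} and \cite{broise} for a proof, then verifies that the hypotheses are met in the situation at hand. So there is nothing in the paper to compare your argument against.

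That said, your sketch is the standard modern route, essentially Hennion's reformulation: iterate the Doeblin--Fortet inequality (iv), use (v) together with the lower-semicontinuity property (i) to convert $\CU$-relative compactness into a bound on the measure of non-compactness on $\CV$, deduce $r_{\mathrm{ess}}(\Phi)<1$ via Nussbaum's formula, and then read off the peripheral spectral decomposition from the Riesz calculus. Two small points are worth tightening. First, your decomposition $\Phi^{kn_0}=K_k+R_k$ with $K_k$ compact on $\CV$ is not quite how the argument runs; what one actually controls is the essential norm (or the ball measure of non-compactness) of $\Phi^{kn_0}$, and (i) is what allows passing from $\CU$-precompactness to a genuine bound in $\CV$. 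Second, the semisimplicity of the peripheral eigenvalues is the one place where (iii) does real work beyond bounding the spectral radius: a nontrivial Jordan block at some $\lambda_i$ with $|\lambda_i|=1$ would force polynomial growth of $\|\Phi^{n}\varphi\|_{\CU}$ for a generalized eigenvector $\varphi\in\CV$, contradicting (iii). You allude to this but it deserves to be stated explicitly, since without it one only obtains $\Phi=\sum_i(\lambda_i\Phi_i+N_i)+\Psi$ with nilpotent $N_i$, which is weaker than the claimed decomposition.
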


\subsubsection{Finer spectral decomposition and consequences}

\begin{proposition}
\label{prop-spectre}
With previous assumptions and notations, 
$\lambda_{Z}$ is a simple single dominating eigenvalue.
\end{proposition}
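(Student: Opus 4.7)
The plan is to run the classical Perron--Frobenius--Ruelle argument adapted to the induced transfer operator: first produce a strictly positive Hölder eigenfunction $H_Z$ associated with $\lambda_Z$, then normalize $\wt\CL_Z$ into a Markov operator, and finally use topological exactness of $g_\CM$ (furnished by the GALEO property) to rule out any other peripheral eigenvalue and to force simplicity. To construct $H_Z$, the pointwise bound \eqref{equ-bowen} gives $e^{-K}\le \wt\CL_Z^n(\BBone)\le e^K$, while iterating the Doeblin--Fortet inequality of Proposition \ref{prop-doeblin}(4) uniformly bounds $\|\wt\CL_Z^n(\BBone)\|_\ga$. The Cesàro averages $H_N:=\frac1N\sum_{n<N}\wt\CL_Z^n(\BBone)$ thus form an equi-Hölder family, relatively compact in $\CC^0(\I)$; any $\CC^0$-subsequential limit, promoted to $\CC^\ga$ via hypothesis (i) of Theorem \ref{pajot1}, yields $H_Z\in\CC^\ga(\I)$ with $H_Z\ge e^{-K}$ and $\wt\CL_Z H_Z=H_Z$ (the last relation follows from $\wt\CL_Z H_N - H_N = \frac1N(\wt\CL_Z^N\BBone-\BBone)\to 0$ and continuity of $\wt\CL_Z$ on $\CC^0$).

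Next I would normalize by setting $\wh\CL_Z\varphi:=H_Z^{-1}\wt\CL_Z(H_Z\varphi)$. This operator is conjugate to $\wt\CL_Z$ on $\CC^\ga(\I)$, fixes $\BBone$, and has the form
$$\wh\CL_Z\varphi(x)=\sum_{g_\CM(y)=x}p_Z(y)\,\varphi(y),\qquad \sum_{g_\CM(y)=x}p_Z(y)=1,$$
with strictly positive weights $p_Z(y)=\lambda_Z^{-1}H_Z(y)H_Z(x)^{-1}e^{W(y)-Z r_\CM(y)}$; hence $\wh\CL_Z$ is a genuine Markov transfer operator. It therefore suffices to show that $1$ is a simple and isolated eigenvalue of $\wh\CL_Z$ on $\CC^\ga(\I)$.

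The key input is topological exactness of $g_\CM$: for every nonempty open $U\subset\I$ there exists $N$ with $g_\CM^N(U)=\I$. I would derive it from \eqref{eq-contraT}, which makes cylinder diameters shrink geometrically with generation, combined with Proposition \ref{prop-densereturnlillefeuille}: for large enough $n$ some $n$-cylinder $C_n$ lies in $U$, and the Markov property (Proposition \ref{prop-Markov return}) forces $g_\CM^n(C_n)=\I$. Given exactness, the peripheral spectrum is handled by the classical modulus argument. Suppose $\wh\CL_Z\psi=\alpha\psi$ with $|\alpha|=1$ and $\psi\in\CC^\ga$. Then $|\psi|\le\wh\CL_Z|\psi|$ pointwise, and integrating against any $\wh\CL_Z^*$-invariant probability (Krylov--Bogolyubov on compact $\I$) forces equality $\wh\CL_Z|\psi|=|\psi|$ everywhere by continuity. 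Iterating this relation and invoking exactness shows that $|\psi|$ is constant, which I may normalize to $1$. Then equality in the triangle inequality for $|\wh\CL_Z\psi(x)|=1$ forces all $\psi(y)$ with $g_\CM(y)=x$ to share the common phase $\alpha\psi(x)$; by induction $\psi\circ g_\CM^n=\alpha^n\psi$ on every $n$-cylinder. Fixing $x_0$ with $\psi(x_0)\ne 0$ and using that $\bigcup_n g_\CM^{-n}(x_0)$ is dense in $\I$ (again by exactness), the continuity of $\psi$ and the relation $\psi\circ g_\CM^n=\alpha^n\psi$ force $\alpha=1$ and $\psi$ constant. Together with Theorem \ref{pajot1}, this reduces the peripheral spectrum of $\wt\CL_Z$ to $\{1\}$ with one-dimensional eigenspace spanned by $H_Z$, so $\lambda_Z$ is a simple dominating eigenvalue of $\CL_Z$.

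The delicate step will be the clean extraction of topological exactness for $g_\CM$: the Markov partition is countable and may accumulate at the boundary of $\I$, so one must combine the geometric shrinking \eqref{eq-contraT} with the density statement of Proposition \ref{prop-densereturnlillefeuille} carefully enough to produce a full $n$-cylinder inside an arbitrary open set $U$, rather than just a non-empty intersection with cylinders. Once exactness is in hand, the Perron--Frobenius-type peripheral spectrum analysis is essentially automatic.
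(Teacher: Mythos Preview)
Your argument is correct and follows the classical Ruelle--Perron--Frobenius route, which is genuinely different from the paper's proof. The paper instead invokes Krasnosel'skii's cone theory: it observes that the cone $\CK$ of nonnegative functions in $\CC^{\ga}(\I)$ is solid and reproducing, shows in one line that for every $0\not\equiv\Fi\in\CK$ some iterate $\CL_Z^{q}(\Fi)$ lies in $\inte\CK$ (because each $q$-cylinder contains a preimage of every point of $\I$), and then cites three theorems from \cite{Kras64} to conclude that $\l_Z$ is simple and dominating. This is considerably shorter but treats the spectral conclusion as a black box; your approach is more self-contained and makes the underlying mechanism (topological exactness of $\gm$ kills the rest of the peripheral spectrum) explicit.

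Two small points to tighten. First, the absolute bound $e^{-K}\le\wt\CL_Z^{\,n}(\BBone)\le e^{K}$ does not follow from \eqref{equ-bowen} alone, which is only a ratio estimate; you still need $\CL_Z^{\,n}(\BBone)\asymp\l_Z^{\,n}$. The clean fix is to produce an eigenmeasure $\nu_Z$ for $\CL_Z^{*}$ first via Schauder--Tychonoff (exactly as the paper does immediately after this proposition), integrate \eqref{equ-bowen} against $\nu_Z$, and read off $\CL_Z^{\,n}(\BBone)(x)=e^{\pm K}\l_Z^{\,n}$; this is logically independent of Proposition~\ref{prop-spectre}, so there is no circularity. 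Second, your final step ``continuity of $\psi$ together with $\psi\circ \gm^{n}=\alpha^{n}\psi$ force $\alpha=1$'' is quickest to close by evaluating at a $\gm$-fixed point, and such a point exists in every $1$-cylinder (this is used explicitly in the proof of Proposition~\ref{prop-controlZc}): $\psi(\xi)=\alpha\,\psi(\xi)$ with $|\psi(\xi)|=1$ gives $\alpha=1$ at once, after which density of $\bigcup_n \gm^{-n}(\xi)$ forces $\psi$ constant.
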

\begin{proof}
We use the cone-theory for operators as it is studied in \cite[chap. 1\& 2]{Kras64}.  
We claim that the set $\CK$ of non-negative $\ga$-H\"older continuous functions is a \emph{solid and reproductible cone} . Solid means it as non-empty interior and reproductible means $$\CC^{\ga}(\I)=\CK-\CK.$$

$\bullet$ {\bf Step one}. We prove that for any $\Fi\not\equiv 0\in \CK$, there exists $q$ such that $\CL_{\phi}^{p}(\Fi)$ belongs to $\inte K$. 

If $\Fi\neq 0$, there exists some $q$-cylinder, say $C_{q}$ such that for every $y$ in $C_{q}$, $\Fi(y)>0$. Then, for every $x$ in $\I$, there exists $y\in C_{q}$ such that $\gm^{q}(y)=x$, and hence 
$$\CL_{Z}^{q}(\Fi)(x)\ge e^{S_{q}(W)(y)-Z.\rm^{q}(y)}\Fi(y)>0.$$

$\bullet$ {\bf Step two}. End of the proof. 
We deduce from step one that $\CL_{Z}$ is strongly positive (see \cite[p.60]{Kras64}). Therefore it is $u$-positive for any $u\in \inte \CK$. From \cite[Th. 2.10, 2.11 and 2.13]{Kras64}  we deduce that $\l_{Z}$ is a simple single dominating eigenvalue.
\end{proof}

Re-employing notations from above, we get $p=1$. As $\CL_{Z}$ acts on continuous function defined on the compact set $\I$, its dual operator acts on the measures. 

The Schauder-Tychonoff theorem yields that there exists $\nu_{Z}$ a probability measure on $\I$ such that 
$$\CL_{Z}^{*}\nu_{Z}=\wt\l_{Z}\nu_{Z},$$
with $\wt\l_{Z}=\int \CL_{Z}(\BBone)\,d\nu_{Z}$. Then, Item $(2)$ of Proposition \ref{prop-doeblin} shows that $\wt\l_{Z}=\l_{Z}$. 

Moreover, we emphasize the double inequality for every $x$ and every $n$, 
\begin{equation}
\label{eq-controldistolz}
\l_{Z}^{n}e^{-K}\le \CL_{Z}(\BBone)^{n}(x)\le \l_{Z}^{n}e^{K}, 
\end{equation} 
where $K$ is the same constant as in Proposition \ref{prop-doeblin} item 2.

Then, Proposition \ref{prop-spectre} yields the existence of  $H_{Z}$ a positive $\ga$-H\"older function such that 
$$\CL_{Z}(H_{Z})=\l_{Z}H_{Z}.$$
The function $H_{Z}$ is unique if we add the condition $\disp \int H_{Z}\,d\nu_{Z}=1$. $H_{Z}$ and $\nu_{Z}$ are respectively referred to as the eigen-function and the eigen-measure. 

The spectral decomposition for $\CL_{Z}$ means 
\begin{equation}\label{eq-lczspec}
\forall\Fi\in\CC^{\ga}(\I),\ \forall n,\ \CL_{Z}^{n}(\Fi)=\l_{Z}^{n}\int\Fi\,d\nu_{Z}. H_{Z}+\l_{Z}^{n}\Psi^{n}(\Fi).
\end{equation}

The measure $\mu_{Z}$ defined by 
$$d\mu_{Z}=H_{Z}d\nu_{Z},$$
is a Gibbs measure, in the sense that for every $n$ cylinder $C_{n}$ and for every $x\in C_{n}$, 
$$\mu_{Z}(C_{n})\asymp\footnotemark \ e^{S_{n}(W)(x)-\rm^{n}(x)Z}e^{\pm K},$$
\footnotetext{Note that the Birkhoff sum is done with respect to $\gm$.}
for some universal constant $K$.  It is a standard computation that, 

on the one hand 
$$h_{\mu_{Z}}(\gm)+\int W-Z\rm\,d\mu_{Z}=\log\l_{Z},$$
and on the other hand,  for any other $\gm$-invariant probability $\mu$, 
 $$h_{\mu}(\gm)+\int W-Z\rm\,d\mu<\log\l_{Z}.$$
In other words we have proved: 
\begin{theorem}
\label{th-localequil1d}
If $\CL_{Z}(\BBone)(x)<+\8$ for some $x\in \I$, then $\mu_{Z}$ is the unique equilibrium state for $(\I,\gm)$ associated to $W-Z.\rm$.
\end{theorem}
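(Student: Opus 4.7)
The plan is to derive Theorem \ref{th-localequil1d} from the spectral picture of $\wt\CL_{Z}$ and the Gibbs bound $\mu_{Z}(C_{n})\asymp e^{S_{n}(W)(x)-Z\rm^{n}(x)-n\log\lambda_{Z}}$ just established, in three moves: (i) verify the pressure identity for $\mu_{Z}$; (ii) prove a variational upper bound for every other $\gm$-invariant probability; (iii) obtain strict inequality unless $\mu=\mu_{Z}$.

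For (i), observe that the partition $\CP$ of $\I$ into $1$-cylinders is generating for $\gm$: by Proposition \ref{prop-densereturnlillefeuille} and the Markov structure of Proposition \ref{prop-Markov return}, two points of $\I$ sharing the same forward $\gm$-itinerary must coincide. The Gibbs estimate together with $Z>Z_{c}$ and a geometric tail bound on $\mu_{Z}(\{\rm>n\})$ gives $\rm\in L^{1}(\mu_{Z})$ and $H_{\mu_{Z}}(\CP)<+\infty$. Shannon--McMillan--Breiman applied to $\CP$ then yields, for $\mu_{Z}$-a.e.\ $x$,
$$h_{\mu_{Z}}(\gm,\CP)\;=\;\lim_{n}-\tfrac{1}{n}\log\mu_{Z}(C_{n}(x))\;=\;\log\lambda_{Z}-\lim_{n}\tfrac{1}{n}S_{n}(W)(x)+Z\lim_{n}\tfrac{1}{n}\rm^{n}(x),$$
and integrating against $\mu_{Z}$, with Birkhoff for $W$ and $\rm$, gives $h_{\mu_{Z}}(\gm)+\int(W-Z\rm)\,d\mu_{Z}=\log\lambda_{Z}$.

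For (ii), let $\mu$ be another $\gm$-invariant probability with $\rm\in L^{1}(\mu)$. Jensen's inequality on the $n$-fold refinement $\CP^{n}$ gives
$$H_{\mu}(\CP^{n})\;=\;-\sum_{C_{n}}\mu(C_{n})\log\mu(C_{n})\;\le\;-\sum_{C_{n}}\mu(C_{n})\log\mu_{Z}(C_{n}),$$
and substituting the Gibbs estimate, dividing by $n$ and letting $n\to\infty$ yields $h_{\mu}(\gm)+\int(W-Z\rm)\,d\mu\le\log\lambda_{Z}$; when $\int\rm\,d\mu=+\infty$, one truncates $\rm$ on $\{\rm\le N\}$ and lets $N\to\infty$ to see that the free energy still cannot exceed $\log\lambda_{Z}$. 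Finally, (iii) follows because equality in Jensen at every $n$ forces $d\mu/d\mu_{Z}$ to be $\CP^{n}$-measurable for all $n$, hence constant since $\CP$ is generating; combined with ergodicity of $\mu_{Z}$ (a consequence of the simple dominating eigenvalue from Proposition \ref{prop-spectre} and the spectral gap of Proposition \ref{prop-doeblin}), this forces $\mu=\mu_{Z}$.

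The heavy analytic content has already been carried out (existence and positivity of the eigenpair, Doeblin--Fortet inequality, simplicity of the dominating eigenvalue); what remains is essentially bookkeeping. The main obstacle I expect is the countable-partition machinery, namely ensuring $H_{\mu}(\CP)<+\infty$ (or handling it by approximation on $\{\rm\le N\}$) and justifying that $\CP$ is both generating and compatible with Shannon--McMillan--Breiman in this countably-infinite Markov setting, so that the pressure identity and the Jensen bound translate cleanly between the symbolic level and the invariant measures on $(\I,\gm)$.
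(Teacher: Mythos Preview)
The paper itself does not prove this theorem in detail; the text immediately preceding it simply asserts that ``it is a standard computation'' that $h_{\mu_{Z}}(\gm)+\int(W-Z\rm)\,d\mu_{Z}=\log\lambda_{Z}$ and that every other invariant $\mu$ gives a strictly smaller value, implicitly deferring to Bowen's lecture notes and to \cite{lep-survey}. Your outline supplies exactly the standard argument those references contain, and steps (i) and (ii) are correct as written, including your identification of the countable-partition issues (finiteness of $H_\mu(\CP)$, truncation on $\{\rm\le N\}$).

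Step (iii), however, has a genuine gap. From equality in the \emph{limit} $\lim_{n}\tfrac{1}{n}\big[H_\mu(\CP^n)+\sum_{C_n}\mu(C_n)\log\mu_Z(C_n)\big]=0$ you cannot conclude ``equality in Jensen at every $n$'': the relative entropies $D_n=\sum_{C_n}\mu(C_n)\log\big(\mu(C_n)/\mu_Z(C_n)\big)$ may well grow sublinearly without vanishing, so $\mu|_{\CP^n}=\mu_Z|_{\CP^n}$ does not follow. The standard repair is to work at the conditional level rather than with block entropies: normalize the potential to $\bar\psi:=W-Z\rm-\log\lambda_Z+\log H_Z-\log H_Z\circ\gm$, so that $\sum_{\gm(y)=x}e^{\bar\psi(y)}=1$ for every $x$; then the Rokhlin formula $h_\mu(\gm)=H_\mu(\CP\mid\gm^{-1}\CB)=-\int\log g_\mu\,d\mu$ (valid since $\CP$ is countable generating with finite entropy) gives
\[
h_\mu+\int(W-Z\rm)\,d\mu-\log\lambda_Z=\int\Big(\sum_{\gm(y)=x}g_\mu(y)\log\tfrac{e^{\bar\psi(y)}}{g_\mu(y)}\Big)\,d\mu(x)\le 0,
\]
with the Gibbs inequality applied \emph{fiberwise} over each $\gm^{-1}(x)$. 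Equality forces $g_\mu=e^{\bar\psi}=g_{\mu_Z}$ $\mu$-a.e., and the spectral gap of Proposition \ref{prop-spectre} (which makes $\mu_Z$ the unique invariant measure with this $g$-function) then yields $\mu=\mu_Z$. This is the argument the paper is pointing to; once you replace your block-entropy version of (iii) by this conditional one, the proof is complete.
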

Furthermore, if $\mu_{Z}^{\CM}$ denotes the natural extension of $\mu_{Z}$ (see. the Proof of Th. \ref{theo-loctoglob}) then it has full support in $\CM$. This holds because any vertical band over a cylinder (of any generation) is sent to and horizontal strip, and any vertical band has positive measure. 

We let the reader check the following result. The diffeo-version  for this result can  be found in \cite{lep-survey} and which comes from the Abramov formula:
\begin{theorem}
\label{th-muzextended}
If $Z>Z_{c}$, then $\disp\int \rm\,d\mu_{Z}<+\8$ and there exists a unique $f_{1}$-invariant probability measure $\wh\mu_{Z}$ such that 
\begin{enumerate}
\item $\disp d\wh\mu_{Z}\asymp d\mu^{\CM}\otimes dt$ with $\mu_{Z}^{\CM}$ a $\FM$-invariant probability measure. 
\item ${\TCT}_{*}\mu_{Z}^{\CM}=\mu_{Z}$. 
\item $\disp h_{\wh\mu_{Z}}(f_{1})+\int V\,d\wh\mu_{Z}=Z+\frac1{\int\rm\,d\mu_{Z}}\log\l_{Z}$. 
\end{enumerate}

\end{theorem}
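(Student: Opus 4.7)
The plan is to proceed in three stages: first show that $r_\CM\in L^1(\mu_Z)$, then lift $\mu_Z$ to a $F_\CM$-invariant measure on $\CM$ and suspend it into a flow-invariant measure, and finally verify the free-energy formula via the Abramov relation.

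For the integrability, I would use the Gibbs property of $\mu_Z$ in the form established through \eqref{eq-controldistolz}. Grouping 1-cylinders according to the integer part $n=\lfloor r_\CM(y)\rfloor$, the Gibbs bound gives
\begin{equation*}
\mu_Z(\{\lfloor r_\CM\rfloor = n\})\ \asymp\ e^{-Zn}\sum_{\lfloor r_\CM(y)\rfloor=n}e^{W(y)}.
\end{equation*}
By definition of $Z_c$ in \eqref{eq-Zc}, for any $\epsilon>0$ and $n$ large the inner sum is bounded above by $e^{(Z_c+\epsilon)n}$. Hence $\mu_Z(\{\lfloor r_\CM\rfloor=n\})\lesssim e^{-(Z-Z_c-\epsilon)n}$, which is summable once $\epsilon<Z-Z_c$, so $\int r_\CM\,d\mu_Z<+\infty$.

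Next I would construct $\mu_Z^\CM$ as the natural extension of $\mu_Z$ along the skew-product structure of $F_\CM$ provided by Proposition \ref{prop-Markov return}. Because $F_\CM$ uniformly contracts along stable fibres (the GALEO property forces inverse branches to contract horizontally, and $W^{ss}_\CM$-fibres contract under $F_\CM$), there is a unique $F_\CM$-invariant probability on $\CM$ projecting to $\mu_Z$ under $\Theta_\CT$; this also has the same entropy, $h_{\mu_Z^\CM}(F_\CM)=h_{\mu_Z}(g_\CM)$. With $\int r_\CM\,d\mu_Z^\CM<+\infty$ from the first step, the standard Kakutani--Ambrose suspension produces the claimed $\widehat\mu_Z$ with $d\widehat\mu_Z \propto d\mu_Z^\CM\otimes dt$, restricted to $\{(x,t):0\le t<r_\CM(x)\}$ and normalized. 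Uniqueness of $\widehat\mu_Z$ follows from that of $\mu_Z^\CM$ combined with uniqueness of special representations of flows (cf.\ \cite{CFS}).

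For the free-energy identity I would apply the Abramov formula. On the entropy side,
\begin{equation*}
h_{\widehat\mu_Z}(f_1)=\frac{h_{\mu_Z^\CM}(F_\CM)}{\int r_\CM\,d\mu_Z^\CM}=\frac{h_{\mu_Z}(g_\CM)}{\int r_\CM\,d\mu_Z}.
\end{equation*}
On the potential side, the flow integral unfolds into $\int V\,d\widehat\mu_Z = \bigl(\int r_\CM\,d\mu_Z\bigr)^{-1}\int V_{\CM,0}\,d\mu_Z^\CM$, and Lemma \ref{lem-coboundaryA} together with the boundedness of $B$ (Remark \ref{rem-bbounded}) shows that $\int V_{\CM,0}\,d\mu_Z^\CM = \int W\,d\mu_Z$, since $B-B\circ F_\CM$ integrates to zero against a $F_\CM$-invariant probability. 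Combining and using Theorem \ref{th-localequil1d} (which gives $h_{\mu_Z}(g_\CM)+\int W\,d\mu_Z - Z\int r_\CM\,d\mu_Z=\log\lambda_Z$) delivers exactly $h_{\widehat\mu_Z}(f_1)+\int V\,d\widehat\mu_Z = Z + \bigl(\int r_\CM\,d\mu_Z\bigr)^{-1}\log\lambda_Z$.

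The main obstacle is the integrability step: verifying $\int r_\CM\,d\mu_Z<+\infty$ really requires the strict inequality $Z>Z_c$ together with a careful control of the Gibbs constants uniformly over cylinders. The natural-extension and Abramov parts are routine once the Gibbs measure on $\I$ is in hand, but the tail bound is where the $Z_c$-threshold enters essentially and where one must be careful that the implicit constants in $\asymp$ do not depend on $n$.
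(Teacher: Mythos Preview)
Your proposal is correct and follows precisely the route the paper indicates: the paper leaves this result to the reader, pointing to the Abramov formula and the diffeomorphism version in \cite{lep-survey}, and your three stages (integrability via the Gibbs tail estimate and the definition of $Z_c$, natural extension plus suspension as in the proof of Theorem~\ref{theo-loctoglob}, then Abramov combined with Theorem~\ref{th-localequil1d}) are exactly the intended verification. One small remark: the horizontal contraction of inverse branches comes from Observation~\ref{obs-expan} (sectional expansion) rather than the GALEO property itself, but this does not affect the argument.
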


\subsection{Upper bound for $Z_{c}$}
In the previous subsection we have seen that $W-Z.\rm$ admits a local equilibrium state as soon as $\CL_{Z}(\BBone)$ is well defined, that is, the series $\CL_{Z}(\BBone)(z)$  does converge for any (or at least one) $z\in\I$. This holds if $Z>Z_{c}$, by definition of $Z_{c}$, but may also hold for $Z=Z_{c}$. Here, we prove that $Z_{c}\le \CP$. More precisely, we prove that $Z_{c}\le \CP$ always holds and $Z_{c}<\CP$ holds if $V$ admits a regular equilibrium state. 

\begin{proposition}
\label{prop-controlZc}
There exists a $f_{t}$-invariant measure $\wh\mu$ such that $$Z_{c}\le h_{\wh\mu}(f_{1})+\int V\,d\wh\mu$$ holds. Moreover, $\wh\mu(\cup_{t[0,1]}f_{t}(\inte \CM))=0$. 
\end{proposition}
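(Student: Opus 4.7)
The plan is to realize $Z_c$ as the exponential growth rate of weights over preimages of a fixed reference point under $\gm$, to construct $\hat\mu$ as a weak-$\ast$ limit of flow-averaged empirical measures supported on the corresponding trajectory arcs, and to derive the pressure bound by a Misiurewicz-type argument exploiting flow expansiveness from \cite{APPV09}. The structure of the averaging will automatically give the vanishing on $\bigcup_{t\in[0,1]}f_t(\mathring{\CM})$. Fix a reference point $x_0$ in the interior of $\I$ and, for each $n\ge 1$, set
\[
E_n:=\{y\in\I:\ \gm(y)=x_0,\ \lfloor\rm(y)\rfloor=n\},\qquad Z_n:=\sum_{y\in E_n}e^{W(y)},
\]
so that by \eqref{eq-Zc} one has $Z_c=\limsup_{\ninf}\tfrac1n\log Z_n$. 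Lemma \ref{lem-coboundaryA} together with Remark \ref{rem-bbounded} gives $W(y)=\int_0^{\rm(y)}V\circ f_t(y)\,dt+O(1)$ uniformly in $n$ and $y$. Writing $p_y:=e^{W(y)}/Z_n$ and $\tau_n:=\sum_{y\in E_n}p_y\,\rm(y)\in[n,n+1]$, form the probability measures
\[
\hat\mu_n:=\frac{1}{\tau_n}\sum_{y\in E_n}p_y\int_0^{\rm(y)}\delta_{f_t(y)}\,dt.
\]
A shift by $f_s$ only affects the two endpoints of each arc, so $\|(f_s)_*\hat\mu_n-\hat\mu_n\|_{TV}=O(s/\tau_n)\to 0$, and any weak-$\ast$ limit $\hat\mu$ along a subsequence $(n_k)$ is $f_t$-invariant.

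The vanishing property is essentially immediate. Because $\rm$ is a first-return time to $\CM$, an arc $\{f_s(y):0\le s\le\rm(y)\}$ meets $\bigcup_{t\in[0,1]}f_t(\mathring{\CM})$ only for $s\in[0,1]$; therefore
\[
\hat\mu_n\Bigl(\bigcup_{t\in[0,1]}f_t(\mathring{\CM})\Bigr)\le\frac{1}{\tau_n}\sum_{y\in E_n}p_y=\frac{1}{\tau_n}\longrightarrow 0.
\]
Since $\bigcup_{t\in[0,1]}f_t(\mathring{\CM})$ is open, the Portmanteau theorem yields $\hat\mu\bigl(\bigcup_{t\in[0,1]}f_t(\mathring{\CM})\bigr)\le\liminf_k\hat\mu_{n_k}\bigl(\bigcup_{t\in[0,1]}f_t(\mathring{\CM})\bigr)=0$.

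The pressure bound is the main obstacle. Choose $\eps_0$ smaller than the expansivity constant of the flow (\cite{APPV09}) and a finite partition $\xi$ of $\L$ of diameter less than $\eps_0$ with $\hat\mu$-null boundary; set $\xi^{(n)}:=\xi\vee f_1^{-1}\xi\vee\cdots\vee f_1^{-(n-1)}\xi$. Distinct $y,y'\in E_n$ lie in different $1$-cylinders of $\gm$ and produce distinct $f_t$-orbits meeting $W^{ss}_{\S}(x_0)$ near time $n$; flow-expansiveness together with the sectional expansion of Lemma \ref{lem-expandeuus} forces them to $(n,\eps_0)$-separate under $f_1$, so up to a subexponential overcount at most polynomially many points of $E_n$ sit in any atom of $\xi^{(n)}$. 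Setting $\sigma_n:=\sum_{y\in E_n}p_y\,\delta_y$, this yields $H_{\sigma_n}(\xi^{(n)})=\log Z_n-\sum_{y}p_yW(y)-o(n)$. The standard convexity step in Misiurewicz's proof of the variational principle compares $\tfrac1q H_{\hat\mu_n}(\xi^{(q)})$ to $\tfrac1n H_{\sigma_n}(\xi^{(n)})+O(q/n)$; letting $n_k\to\infty$ (continuity of $H_{\cdot}(\xi^{(q)})$ on partitions with $\hat\mu$-null boundary) and then $q\to\infty$ gives
\[
h_{\hat\mu}(f_1,\xi)\ge\limsup_k\Bigl(\tfrac1{n_k}\log Z_{n_k}-\tfrac1{n_k}\sum_{y\in E_{n_k}}p_yW(y)\Bigr).
\]
Finally, $\tfrac1n\sum_yp_yW(y)=\tfrac{\tau_n}{n}\int V\,d\hat\mu_n+O(1/n)\to\int V\,d\hat\mu$, whence $h_{\hat\mu}(f_1)+\int V\,d\hat\mu\ge Z_c$. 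The most delicate point is justifying the uniform $(n,\eps_0)$-separation of $E_n$, which needs a careful combination of flow-expansiveness, Lemma \ref{lem-expandeuus}, and the Markov structure of $\gm$.
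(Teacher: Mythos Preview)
Your route is genuinely different from the paper's, and the main gap is precisely the step you flag as ``most delicate''.

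The paper avoids the Misiurewicz/separation argument altogether. Instead of empirical flow measures, it uses the Markov structure of $\gm$: each $1$-cylinder $C_{i,n}$ with $\lfloor\rm\rfloor=n$ contains a unique $\gm$-fixed point $\xi_{i,n}$, which lifts to a $\FM$-periodic point $\hat\xi_{i,n}$. On the finite alphabet of boxes $\hat C_{i,n}$, the paper builds a \emph{Bernoulli} $\FM$-invariant measure $\mu_n$ with weights $p_i\propto e^{A_{i,n}}$, for which $h_{\mu_n}(\FM)=-\sum p_i\log p_i$ is computed exactly; the elementary inequality $-\sum p_i\log p_i+\sum p_iA_{i,n}=\log\sum e^{A_{i,n}}$ then gives the desired lower bound for the free energy of the induced system, and Abramov's formula together with upper semi-continuity of entropy (expansiveness, \cite{APPV09}) handles the passage to the flow. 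No $(n,\eps_0)$-separation is ever needed: the entropy is produced by the Bernoulli shift, not recovered from a spanning/separated set count.

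By contrast, your argument needs that distinct $y,y'\in E_n$ lie in distinct atoms of $\xi^{(n)}$ (up to subexponential overcount). This is not established. Two points in adjacent $1$-cylinders with the same $\lfloor\rm\rfloor=n$ can be arbitrarily close in $\I$ and both land on $W^{ss}_\S(x_0)$ at time $\approx n$; what separates them is that their orbits pass on opposite sides of the stable lamination of the periodic boundary $p_\pm$ at some intermediate return to $\S$, and turning this into uniform $(n,\eps_0)$-separation for $f_1$ requires a genuine argument that you have not supplied. Lemma~\ref{lem-expandeuus} controls expansion along $(\S,u)$-curves, not separation of points in different cylinders, and flow expansiveness is an infinite-time statement. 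As written, this step is a gap, not a routine verification.

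A secondary issue: your Portmanteau step assumes $\bigcup_{t\in[0,1]}f_t(\mathring\CM)$ is open, but $\CM$ is a Cantor-fibered subset of the cross-section, so this union has empty interior in $M$. The paper gets around this by noting $\hat\mu_n(\cup_{t\in[0,1]}f_t(\CM))\asymp 1/\!\int\rm\,d\mu_n\asymp 1/n$ directly from the suspension construction; the limit statement then concerns the topological interior of the flow box, which suffices for the later application (ruling out that $\hat\mu$ is the full-support equilibrium state).
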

\begin{proof}
We pick some $\xi$ in $\I$. We recall $Z_{c}$ that satisfies  (see \eqref{eq-Zc})
$$Z_{c}=\limsup_{\ninf}\frac1n\log\left(\sum_{\lfloor\rm(y)\rfloor=n}e^{W(y)}\right),$$
with $\gm(y)=\xi$. 
Each 1-cylinder is mapped by $\gm$ onto $\I$, thus contains a unique\footnote{due to expansion} $\gm$-fixed point. If $C_{i,n}$ is a 1-cylinder satisfying 
\begin{enumerate}
\item $y\in C_{i,n}$, 
\item $\lfloor \rm(y)\rfloor=n$,
\end{enumerate}
we denote by $\xi_{i,n}$ the fixed point in $C_{i,n}$. The contraction in the stable leaf $W^{ss}(\xi_{i,n})$ yields that $\xi_{i,n}=:\TCT(\wh\xi_{i,n})$ with $\wh\xi_{i,n}$ in $\CM$ and $\FM(\wh\xi_{i,n})=\wh\xi_{i,n}$. 

Let $r_{i,n}$ be the period for $\wh\xi_{i,n}$, that is 
$$f_{r_{i,n}}(\wh\xi_{i,n})=\wh\xi_{i,n}.$$
Let us set $A_{i,n}:=\disp\int_{0}^{r_{i,n}} V(f_{t}(\wh\xi_{i,n}))\,dt=\vmo(\wh\xi_{i,n})$. 
Let $\wh C_{i,n}$ be the intersection of the vertical band $\TCT^{-1}(C_{i,n})$ and the horizontal strip $\FM(\TCT^{-1}(C_{i,n}))$. 
Note that all the $C_{i,n}$ are disjoints. This holds because two cylinders have empty interior intersection and the boundaries are preimages of the periodic orbit which  contains $p_{\pm}$. Moreover, $p_{-}$ and $p_{+}$ are two consecutive points (for the order relation in the interval), and then no other point of the periodic orbit lies  between them. Now, if two 1-cylinders do intersect on their boundary, this would produce a point of the periodic orbit between $p_{-}$ and $p_{+}$. 

From this we deduce that all the $\wh C_{i,n}$ are finitely many disjoint sets. We denote this collection by $\wh\CC_{n}$.
Then, we construct a $\FM$-invariant measure $\mu_{n}$ in the following way: 
\begin{enumerate}
\item $\mu_{n}(\wh C_{i,n})=\frac{\disp e^{A_{i,n}}}{\disp\sum_{j}e^{A_{j,n}}}=:p_{i}.$
\item $\mu_{n}(\wh C_{i_{0},n}\cap \FM^{-1}(\wh C_{i_{1},n})\cap\ldots\cap \FM^{-k}(\wh C_{i_{k},n}))=\prod_{j=0}^{k}p_{i_{j}}$. 
\end{enumerate}
The measure can be extended to a $f_{t}$-invariant measure $\wh\mu_{n}$ because $\rm\asymp n\pm K$ (see below).

From  Proposition \ref{prop-regularity}
 we get that there exists some universal constant $K$ such that 
$$e^{-K}\le\frac{e^{W(y)}}{e^{W(\xi_{i,n})}}\le e^{K},$$
where $y$ belongs to $C_{i,n}$ and $\gm(y)=\xi$. 
This yields 
\begin{equation}
\label{eq-approx1Zc}
\sum_{\substack{\gm(y)=\xi,\\ \lfloor \rm(y)\rfloor=n }}e^{W(y)}=\sum_{j}e^{W(\wh\xi_{j,n})}e^{\pm K}.
\end{equation}
As $B$ is bounded, equality \eqref{equ-coboundaryAW} shows that 
$\disp A_{i,n}=W(\wh\xi_{j,n})\pm2K$ holds. Then, equality \eqref{eq-approx1Zc} yields 
\begin{equation}
\label{eq-approx2Zc}
\sum_{\substack{\gm(y)=\xi,\\ \lfloor \rm(y)\rfloor=n }}e^{W(y)}=\sum_{j}e^{Ai_{j,n})}e^{\pm 3K}.
\end{equation}
On the other hand, 
\begin{eqnarray*}
\frac1N H_{\mu_{n}}(\bigvee_{0}^{N-1}\FM^{-k}(\wh\CC_{n}))&=&-\frac1N\sum_{\vec i=(i_{0},\ldots i_{N-1})}\prod_{k=0}^{N-1}p_{i_{k}}\log\prod_{k=0}^{N-1}p_{i_{k}}\\
&=& -\frac1N\sum_{\vec i=(i_{0},\ldots i_{N-1})}\sum_{k=0}^{N-1}\prod_{k=0}^{N-1}p_{i_{j}}\log p_{i_{k}}\\
&=& -\frac1{N}\sum_{k=0}^{N-1}\sum_{j} p_{j}\log p_{j}\sum_{\substack{i=(i_{0},\ldots i_{N-1})\\ i_{k}=j\\}}\prod_{\substack{l=0\\l\neq k}}^{N-1}p_{i_{l}}\\
&=& -\sum_{j}p_{j}\log p_{j}. 
\end{eqnarray*}
Hence, 
\begin{equation}
\label{equ1-entropymun}
h_{\mu_{n}}(\FM)=-\sum_{j}p_{j}\log p_{j}.
\end{equation}
Furthermore  
\begin{equation}
\label{eq-inteW}
\int \vmo \,d\mu_{n}=\sum_{j}p_{j}A_{j,n}\pm 3K.
\end{equation}
Equalities \eqref{equ1-entropymun} and \eqref{eq-inteW} yield 
$$h_{\mu_{n}}(\FM)+\int\vmo\,d\mu_{n}\ge \sum_{j}-p_{j}\log p_{j}+p_{j}A_{j,n}-3K.$$
Now, it is well known that $\sum_{j}-q_{j}\log q_{j}+q_{j}A_{j,n}$ with condition $\sum q_{j}=1$ and the $q_{j}$'s are non negative is maximal for $q_{j}=p_{j}$ with value $\log\left(\sum e^{A_{j,n}}\right)$ (see first pages of \cite{Bowenlnm}).

And finally, we recall that Prop. \ref{prop-returndynlip} 
 shows that for every $z\in \wh C_{i,n}$, $\rm(z)=n\pm K$. Then, the Abramov formula and \eqref{eq-approx2Zc} yield
 $$
\frac1n\log\left(\sum_{\substack{\gm(y)=\xi,\\ \lfloor \rm(y)\rfloor=n }}e^{W(y)}\right)\le \frac{n+K}{n\int \rm\,d\mu_{n}}\left(h_{\mu_{n}}(\FM)+\int\vmo\,d\mu_{n}\right)+\frac{6K}{n}.
$$
If $\wh\mu_{n}$ denotes the $f_{t}$-invariant probability obtained from $\mu_{n}$, then we have   

\begin{equation}
\label{eq-estiZc}
\frac1n\log\left(\sum_{\substack{\gm(y)=\xi,\\ \lfloor \rm(y)\rfloor=n }}e^{W(y)}\right)\le \frac{n+K}{n}\left(h_{\wh\mu_{n}}(f_{1})+\int V\,d\wh\mu_{n}\right)+\frac{6K}{n}.
\end{equation}
Now, let us consider any subsequence $n_{k}\to+\8$ such that the left hand side term in \eqref{eq-estiZc} goes to $Z_{c}$, and then  a smaller subsequence such that $\wh\mu_{n_{k}}$ converges to some $f_{t}$-invariant probability $\wh\mu$. The upper semi-continuity for entropy yields 
$$Z_{c}\le h_{\wh\mu}(f_{1})+\int V\,d\wh\mu.$$

It remains to estimate $\wh\mu(\CM)$. Note that $\rm(z)=n\pm K$ yields $\int\rm\, d\mu_{n}=n\pm K$, and then 
$$\wh\mu_{n}(\cup_{0\le t\le 1}f_{t}(\CM))\approx\frac1n.$$
A standard computation shows that at the limit, the interior  of $\cup_{0\le t\le 1}f_{t}(\CM)$ cannot get positive mesure. 
\end{proof}

We deduce a very important result from the estimation of $Z_{c}$:
\begin{corollary}
\label{coro-lpcv}
Inequality $Z_{c}\le \CP$ holds and $\CL_{\CP}(\BBone)(x)<+\8$  holds for any $x$ in $\I$. Moreover, $\l_{\CP}\le 1$. 
\end{corollary}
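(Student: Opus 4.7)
The plan is to establish the three claims in sequence. The first, $Z_c\le\CP$, is immediate from Proposition \ref{prop-controlZc}: it produces an $f_t$-invariant probability $\wh\mu$ with $Z_c\le h_{\wh\mu}(f_1)+\int V\,d\wh\mu$, and the definition of $\CP$ forces this upper bound to be at most $\CP$.

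The second claim requires strengthening the argument of Proposition \ref{prop-controlZc} into a uniform bound on the partial sums $\Sigma_N:=\sum_{n=1}^N S_n e^{-n\CP}$, with $S_n$ as in \eqref{eq-Zc}. The delicate case is $Z_c=\CP$, where Proposition \ref{Pro:convergence} offers no information at the critical parameter. I would rerun the Bernoulli construction from the proof of Proposition \ref{prop-controlZc}, but with weights already discounted by $e^{-n\CP}$ and normalized by $\Sigma_N$ rather than by a single $S_n$. For each $N$, set
$$p_{i,n}:=\frac{e^{A_{i,n}-n\CP}}{\Sigma_N},\qquad n\le N,$$
and extend to a $\FM$-invariant Bernoulli probability $\mu_N$ on the disjoint union of the $\wh C_{i,n}$. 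A computation paralleling \eqref{equ1-entropymun}--\eqref{eq-estiZc} yields
$$h_{\mu_N}(\FM)+\int V_{\CM,0}-\CP\,\rm\,d\mu_N=\log\Sigma_N+O(1),$$
where the $O(1)$ is uniform in $N$ and depends only on the uniform bound for $B$ (Remark \ref{rem-bbounded}) together with the distortion estimates of Propositions \ref{prop-regularity}--\ref{prop-returndynlip}. Extending $\mu_N$ to an $f_1$-invariant probability $\wh\mu_N$ via Abramov and invoking $h_{\wh\mu_N}(f_1)+\int V\,d\wh\mu_N\le\CP$ yields $\log\Sigma_N\le O(1)$ uniformly in $N$. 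Hence $\sum_n S_n e^{-n\CP}<+\infty$, and Lemma \ref{lem-Zcunif} propagates the conclusion to every $x\in\I$.

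The bound $\l_\CP\le 1$ follows by repeating the argument at the level of the iterated map $\FM^n$, whose $1$-cylinders are precisely the $n$-cylinders of $\FM$. For each $n$-cylinder $C$ let $\wh\xi_C$ be the associated $\FM^n$-fixed point, $A_C^{(n)}:=S_n V_{\CM,0}(\wh\xi_C)$, $r_C^{(n)}:=\rm^n(\wh\xi_C)$, and $\Sigma_N^{(n)}:=\sum_{C:\,r_C^{(n)}\le N}e^{A_C^{(n)}-\CP r_C^{(n)}}$. Construct a $\FM^n$-invariant Bernoulli probability $\mu_N^{(n)}$ with weights $e^{A_C^{(n)}-\CP r_C^{(n)}}/\Sigma_N^{(n)}$. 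The free-energy identity, now at the $\FM^n$-scale, combined with Abramov applied to the suspension with base map $\FM^n$ and roof $\rm^n$, produces
$$h_{\mu_N^{(n)}}(\FM^n)+\int \bigl(S_n V_{\CM,0}-\CP\,\rm^n\bigr)\,d\mu_N^{(n)}=\log\Sigma_N^{(n)}+O(1)\le 0$$
uniformly in $n$ and $N$. The per-cylinder distortion bounds then give $\CL_\CP^n(\BBone)(x)\asymp\Sigma_\infty^{(n)}\le C$ uniformly in $n$; combining with the two-sided comparison \eqref{eq-controldistolz} yields $\l_\CP^n\le Ce^K$, and letting $n\to\infty$ gives $\l_\CP\le 1$.

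The crux is ensuring that the $O(1)$ error in the free-energy identity is genuinely uniform in $n$ when passing to higher iterates. This reduces to verifying that Propositions \ref{prop-regularity} and \ref{prop-returndynlip} yield Birkhoff-sum distortion on $n$-cylinders bounded by $\kappa\cdot\diam(\I)^\gamma$, independently of $n$; the dynamical H\"older property is tailored exactly for this non-accumulation. Once that uniformity is in hand, the Bernoulli-plus-Abramov machinery of the second step transfers to every iterate without further modification.
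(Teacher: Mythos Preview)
Your argument is correct, but it takes a different and considerably more laborious route than the paper's.

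The paper's proof is a short monotone-limit argument. Once $Z_c\le\CP$ is established, it works entirely with $Z>\CP$, where the full spectral machinery (Proposition \ref{prop-doeblin}, Theorem \ref{th-localequil1d}, Theorem \ref{th-muzextended}) is already available. From item (3) of Theorem \ref{th-muzextended} one reads off
\[
h_{\wh\mu_Z}(f_1)+\int V\,d\wh\mu_Z \;=\; Z+\frac{\log\l_Z}{\int\rm\,d\mu_Z},
\]
and since $Z>\CP$ forces the left side to be $<Z$, one gets $\l_Z<1$. Inequality \eqref{eq-controldistolz} then gives $\CL_Z^{n}(\BBone)(x)\le e^{K}$ for every $n$ and every $Z>\CP$, with $K$ uniform on compacts (Proposition \ref{prop-doeblin}(2)). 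Since $Z\mapsto\CL_Z^{n}(\BBone)(x)$ is decreasing, letting $Z\downarrow\CP$ yields both $\CL_\CP^{n}(\BBone)(x)\le e^{K}$ and hence $\l_\CP\le 1$ in one stroke.

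Your approach instead reruns the Bernoulli construction of Proposition \ref{prop-controlZc} directly at $Z=\CP$, first over all $1$-cylinders (to bound the partial sums $\Sigma_N$) and then over all $n$-cylinders (to bound $\CL_\CP^{n}(\BBone)$ uniformly in $n$). This is sound; the uniformity of the $O(1)$ errors in $n$ that you worry about does indeed follow from the dynamical H\"older estimates and the uniform bound on $B$, exactly as you indicate. What your approach buys is self-containment: you never invoke Theorem \ref{th-muzextended} or the eigenmeasure $\nu_Z$. What the paper's approach buys is economy: the hard work was already done in building the spectral picture for $Z>Z_c$, and the corollary then falls out in a few lines by monotonicity, with no need to revisit the Bernoulli-plus-Abramov computation at each iterate.
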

\begin{proof}
We copy here method (and results) from \cite{lep-survey}. 

By Proposition \ref{prop-controlZc}, 
$Z_{c}\le h_{\wh\mu}(f_{1})+\int V\,d\wh\mu$ holds and $h_{\wh\mu}(f_{1})+\int V\,d\wh\mu\le\CP$ holds by definition of $\CP$. 

Therefore, for any $Z>\CP$, $Z>Z_{c}$ holds and we can apply Theorem \ref{th-muzextended}. In particular the 3nd point states 
$$\disp h_{\wh\mu_{Z}}(f_{1})+\int V\,d\wh\mu_{Z}=Z+\frac1{\int\rm\,d\mu_{Z}}\log\l_{Z},$$
with $Z>\CP$, which yields $\log\l_Z<0$. 


Furthermore, $Z\mapsto \CL_{Z}(\BBone)(x)$ is decreasing. If $Z\downarrow \CP$, Inequality \eqref{eq-controldistolz} shows that 
$$\disp\CL_{\CP}(\BBone)(x)\le e^{K}$$
which means that there is convergence of the induced operator for $Z=\CP$. 
The same argument also shows that for every $n$,  
$$\disp\CL_{\CP}^{n}(\BBone)(x)\le e^{K}$$
holds which implies that $\l_{\CP}\le 1$. 
\end{proof}

\section{End of the Proofs of Theorems}\label{sec-endproof}
\subsection{Proof of first part of Theorem \ref{th-main1}}
If no regular measure for $V$ in an equilibrium state, then the theorem holds. 
Let us thus assume that $V$ admits two different regular equilibrium states. We will  produce some contradiction. 

We denote these equilibrium states by $\wh\mu_{1,V}$ and $\wh\mu_{2,V}$. We consider $x_{i}$, $i=1,2$ a regular point with respect to $\wh\mu_{i,V}$ and we assume it satisfies any condition which holds $\wh\mu_{i,V}$-\AE. Moreover, we assume that $x_{i}$ is a density point for all the (finitely many) properties  we are going to invoke below.

We can construct two mille-feuilles, say $\CM_{1}$ and $\CM_{2}$, each one with positive* $\wh\mu_{i,V}$ measure. 

\begin{proposition}
\label{prop-genemillefeuilbis}
There exists a generalized mille-feuilles (see Def. \ref{def-generalizedmilfeui}) with positive* $\wh\mu_{1,V}$ and  $\wh\mu_{2,V}$ measure. 
\end{proposition}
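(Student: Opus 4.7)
The plan is to reduce to ergodic equilibrium states and then combine the SACS/mille-feuilles construction with the Markov structure of $\CM$.

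First, I would reduce to the case that $\wh\mu_{1,V}$ and $\wh\mu_{2,V}$ are both ergodic. Since entropy is affine, the set of equilibrium states is convex, and two distinct equilibrium states must produce two distinct ergodic equilibrium states in their ergodic decomposition; since the only singular ergodic equilibrium states are the finitely many Dirac masses $\delta_{\sigma}$, this yields two distinct regular ergodic equilibrium states, still denoted $\wh\mu_{i,V}$.

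Next, I would construct a single mille-feuilles $\CM$ seen by both measures simultaneously, i.e. with $\wh\mu_{i,V}(\Seps)>0$ for $i=1,2$. This is the heart of the proof. I would adapt the construction of Propositions~\ref{prop-leo} and~\ref{prop-millefeuillesmeasure}: the GALEO construction exploits $\mu_{SRB}$-generic points only to build nearby long Pesin unstable leaves with controlled angle, and since each $\wh\mu_{i,V}$ is hyperbolic by regularity, $\wh\mu_{i,V}$-almost every point plays the analogous role for its own measure. Using the density of periodic points in $\Lambda$ (Lemma~\ref{lem-perioddense}) together with the transitivity of the attractor, I would choose a periodic basis point $Q$ accumulated by both $\wh\mu_{i,V}$-regular points, base the SACS at $Q$, and enlarge the slope parameter $\kappa_{2}$ until the resulting mille-feuilles captures enough $\wh\mu_{i,V}$-regular points of each type for both $\wh\mu_{i,V}(\Seps)$ to be positive.

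Given such a common $\CM$, each $\wh\mu_{i,V}$ induces an ergodic $F_{\CM}$-invariant probability $\mu_{i}^{\CM}$ via the Special Representation of Flows, which projects by $\TCT$ to a $g_{\CM}$-invariant probability $\nu_{i}$ on $\I$. Since $\CM$ is partitioned up to negligible boundaries into a countable family of 1-cylinders, each $\nu_{i}$ assigns positive mass to at least one 1-cylinder. Using the full-branched Markov property (Proposition~\ref{prop-Markov return})---every 1-cylinder is mapped onto $\I$ by $g_{\CM}$---together with the LEO property, I would argue that the supports of $\nu_{1}$ and $\nu_{2}$ must share a common cylinder $C_{n}$ of some generation $n$ (refining through descendant cylinders if the 1-cylinders of positive $\nu_{1}$-mass and of positive $\nu_{2}$-mass do not overlap directly). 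The generalized mille-feuilles $\TCT^{-1}(C_{n})\cap\CM$ then has positive* measure for both $\wh\mu_{1,V}$ and $\wh\mu_{2,V}$.

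The main obstacle is the second step: constructing a mille-feuilles seen by both ergodic regular equilibrium states. Without knowing in advance that the supports of $\wh\mu_{1,V}$ and $\wh\mu_{2,V}$ meet substantially, the choice of the periodic basis point $Q$ is delicate, and one must simultaneously control the abundance of Pesin regular points for both measures in its neighborhood; once this step is carried out, the remaining cylinder-selection argument is essentially standard thanks to the full-branched Markov structure of $g_{\CM}$.
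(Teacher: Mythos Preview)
Your approach diverges from the paper's and contains a genuine gap.

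The paper does \emph{not} attempt to build one mille-feuilles seen by both measures from the outset. Instead it keeps the two separate mille-feuilles $\CM_{1},\CM_{2}$ (each with positive* measure for its own $\wh\mu_{i,V}$), and the decisive step is thermodynamic, not geometric: by the Abramov formula, the projection $\mu_{i,V}$ of $\wh\mu_{i,V}$ onto the basis of $\CM_{i}$ satisfies
\[
h_{\mu_{i,V}}(g_{\CM_{i}})+\int (W-\CP\,r_{\CM_{i}})\,d\mu_{i,V}=0,
\]
so by Corollary~\ref{coro-lpcv} and Theorem~\ref{th-localequil1d} each $\mu_{i,V}$ \emph{is} the Gibbs measure $\mu_{\CP}$ on its basis, and in particular gives positive mass to \emph{every} cylinder. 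One then picks a periodic orbit intersecting both strips $[p_{-,1},p_{+,1}]$ and $[p_{-,2},p_{+,2}]$ (this exists because $\mu_{SRB}$ has full support), adjusts $\kappa_{2}$ so that its unstable leaf is eligible of type~2 in suitable generalized mille-feuilles over cylinders $C_{n,1}\subset\CM_{1}$ and $C_{m,2}\subset\CM_{2}$, and finally pushes a box of positive $\wh\mu_{1,V}$-measure near $P\in\CM_{1}$ forward by the flow into the generalized mille-feuilles over $C_{m,2}$. The Gibbs property is what guarantees this box has positive $\wh\mu_{1,V}$-measure in the first place.

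Your proposal omits precisely this identification $\mu_{i,V}=\mu_{\CP}$. Without it, your step~2 (a common mille-feuilles) is, as you yourself note, unresolved: two mutually singular ergodic measures need not share density points, so there is no obvious periodic basis $Q$ accumulated by regular points of \emph{both}. Your step~3 is also flawed as written: the full-branched Markov property of $g_{\CM}$ does \emph{not} force two arbitrary ergodic invariant measures to share a common positive-mass cylinder (think of two Dirac masses on distinct fixed points of a full shift). Both difficulties dissolve once you invoke the Gibbs property --- every cylinder then has positive $\nu_{i}$-mass --- but that is the key idea your outline is missing.
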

\begin{proof}
First, let us fix notations. For $\CM_{i}$, the measure $\wh\mu_{i,V}$ can be written under the form 
$$\mu_{i,V}^{\CM_{i}}\otimes dt,$$
where $\mu_{i,V}^{\CM_{i}}$ is $F_{\CM_{i}}$-invariant (reemploying previous notations). 
The basis is $[p_{i,-},p_{i,+}]$ and projection of $\mu_{i,V}^{\CM_{i}}$ on the basis is 
a $g_{\CM_{i}}$-invariant measure $\mu_{i,V}$.

Note that the assumption ``$\CM_{i}$ has positive* $\wh\mu_{i,V}$-measure'' means that some box $\disp\bigcup_{t\in[-\eps,\eps]}f_{t}(\CM_{i})$ has positive measure, thus the expectation of the return time in the box is finite. In the box the measure is of the form $\mu_{i,V}^{\CM_{i}}\otimes dt$ and returns times are almost constant along the flow direction locally and  in the box. This yields that the return time has finite expectation for $\mu_{i,V}^{\CM_{i}}$. 

Furthermore, by construction of the mille-feuilles, return times are constant along $W^{ss}_{\CM_{i}}$-fibers. If $\mu_{i,V}$ denotes the projection onto the basis of $\mu_{i,V}^{\CM_{i}}$, it is a $g_{\CM_{i}}$-invariant measure  and 
$$\int r_{\CM_{i}}\,d\mu_{i,V}=\int r_{\CM_{i}}\,d\mu_{i,V}^{\CM_{i}}.$$
By the Abramov formula, 
$$h_{\mu_{i,V}}(g_{\CM_{i}})+\int W-\CP r_{\CM_{i}}\,d\mu_{i,V}=h_{\mu_{i,V}}(g_{\CM_{i}})+\int V_{\CM_{i},\CP}\,d\mu_{i,V}=0,$$
and then Corollary \ref{coro-lpcv} and Theorem \ref{th-localequil1d} show that $\mu_{i,V}$ is the measure $\mu_{\CP,i}$, obtained on $\CM_{i}$ following the work done above with $Z=\CP$. 

The same work can be done on any generalized mille-feuilles constructed over a $n$-cylinder of each $\CM_{i}$. 
Let us then consider $\CM_{1}$. We claim that there exists some periodic point in $\S_{1}$ with projection in $[p_{-,1},p_{+,1}]$ whose orbit does belong to $\S_{2}$ and with projection in $[p_{-,2},p_{+,2}]$. This holds because $\mu_{SRB}$ has full support and we can create a periodic orbit close to any $\mu_{SRB}$-regular orbit for any fixed interval if time $t\in [0,T]$. 

To fix notation we denote by $P$ the point in $\S_{1}$, $Q=f_{\tau}(P)$ the point in $\S_{2}$. By construction $P$ belongs to the vertical band over some $n$-cylinder $C_{n,1}$ and $Q$ belongs to the vertical band over some $m$-cylinder $C_{m,2}$.

This orbit is hyperbolic. We can thus always increase the constants $\kappa_{1,1}$, $\kappa_{2,1}$ and  $\kappa_{1,2}$, $\kappa_{2,2}$ and $n$ and $m$  such that the unstable leaves at $P$ and $Q$ become eligible curves of type 2 for the generalized mille-feuilles over $C_{n,1}$ and $C_{m,2}$. 

By uniqueness of the local equilibrium state, the local equilibrium state for the 1d dynamics associated to these 2 generalized mille-feuilles are the restriction of $\mu_{i,V}$ (with renormalization to get probabilities). 

The generalized mille-feuille over $C_{n,1}$ has positive* $\wh\mu_{1,V}$-measure ad as the local equilibrium state is a Gibbs measure, we can find a very small box around $P$ in $\S_{1}$ with positive* $\wh\mu_{1,V}$-measure. We can then push it forward by the flow and this yields that the generalized mille-feuilles over $C_{m,2}$ has positive* $\wh\mu_{1,V}$-measure. 

This finishes the proof. 
\end{proof}

To conclude the proof of Theorem \ref{th-main1} we just have to say that the restriction and projection of $\wh\mu_{i,V}$ on the basis of the generalized mille-feuilles from Prop. \ref{prop-genemillefeuilbis}, say $\CM, $do coincide because they are the unique local equilibrium state for $V-\CP\rm$. Therefore 
$$\wh\mu_{1,V}=\wh\mu_{2,V},$$
and the Theorem is proved.

\subsection{Proof of Theorem \ref{th-main2}}
If $V$ is a potential and $\s$ is a singularity for $f$ such that $\delta_{\s}$ is an equilibrium state for $V$ then we have for any invariant probability $\mu$, 
$$h_{\mu}(f_{1})+\int V\,d\mu\le h_{\delta_{\s}}(f_{1})+\int V\,d\delta_{\s}=V(\s).$$
This yields 
$$\forall \mu,\ \int V\,d\mu\le V(\s),$$
which means that $\delta_{\s}$ is a $V$-maximizing measure. The same argument works for any $\be.V$ with $\be> 0$.

Consequently, a singularity can be the support of an equilibrium state for $\be.V$ with $\be>0$ only if the Dirac measure at the singularity is also a $V$-maximizing measure.

Theorem \ref{th-main2} is then just the contrapositive of this observation. If no singularity is a $V$-maximizing measure, no singularity can be an equilibrium state for $\be.V$ with $\be>0$. As there exists at least one, it must me a regular measure, thus it is unique. 

Note that analyticity for $\CP(\be)$ (and $\be\ge 0$) will follow from Theorem \ref{th-main1} ``case 1''.

\subsection{Proof of Theorem \ref{th-main3}}
Theorem \ref{th-main3} follows from the same observation. No singularity can be a measure of maximal entropy because $h_{top}>0$ and singularities have zero entropy. As the measure of maximal entropy does exist, it is regular thus unique.

\subsection{End of the proof of Theorem \ref{th-main1}}
We go back to the general case for $V$. First, we claim that there exists $\eps>0$ such that for any $\be\in[0,\eps]$, there exists a unique equilibrium state for $\be.V$ and it is a regular measure. This holds because any singularity has zero entropy and then, for $\be$ sufficiently small, 
$$h_{\mu_{top}}(f_{1})+\be\int Vd\mu_{top} >\be\max_{\s\ singularity} V(\s),$$
where $\mu_{top}$ is the measure of maximal entropy.
Therefore no singularity can be an equilibrium state for these small (but positive) $\be$. 

Now, if there exists $\be_{c}$ such that some singularity is an equilibrium state for $\be_{c}.V$, then it is also a $V$-maximizing measure. The pressure function is convex and  has for asymptote at $+\8$ the line 
$$h+\be.A(v),$$
where $A(v)$ stands for the maximal value for $\int V\,d\mu$ (with $\mu$) and $h$ is the maximal entropy among $V$-maximizing measures. 

Therefore, if $\be_{c}$ exists as above, as singularities have zero entropy, this yields that $h=0$. 
Consequently, $\CP(\be)$ touches its asymptote at $\be_{c}$ and for convexity reason, $\CP(\be)\equiv \be. A(v)$ for any $\be>\be_{c}$. 

Now, for $0<\be<\be_{c}$, the first part of Theorem \ref{th-main1} says that there exists a unique equilibrium state  and it has full support. We remind Prop. \ref{prop-controlZc}, which states
$$Z_{c}(\be)\le h_{\wh\mu^{\be}}(f_{1})+\be.\int V\,d\wh\mu^{\be},$$
with $\wh\mu^{\be}(\inte{\CM})=0$. This last condition shows that $\wh\mu^{\be}$ cannot be the equilibrium state associated to $\be. V$ because the equilibrium state has full support. 
Thus we get 
$$Z_{c}(\be)<\CP(\be).$$
This yields the implicit formula: 
\begin{equation}
\label{equ-implicipbeta}
1=\l_{\CP(\be)}.
\end{equation}
Now, we claim that 
\begin{equation}
\label{equ2-implicibeta}
\disp {\frac{\partial \l_{Z,\be}}{\partial \be}}_{|Z=\CP(\be)}\not=0,
\end{equation} 
where $\l_{Z,\be}$ is the spectral radius for $\CL_{Z}$ and $\be.V$. 
Assuming this claim holds, we show now how we can finish the proof of Theorem \ref{th-main1}. 

We let the reader check that for fixed $\be$, $Z\mapsto \l_{Z,\be}$ is $\C$-analytic in the complex domain $|Z|>Z_{c}(\be)$. This holds because $\l_{Z,\be}$ is a simple dominating eigenvalue, thus this local analyticity  and connectedness shows that it is globally analytic. 

Furthermore, for fixed $Z$, $\be\mapsto \l_{Z,\be}$ is analytic in some complex neighborhood of  $|\l_{Z,\be}|\le 1$. Then,  the implicit function theorem for analytic functions (see \cite{Range}) yields that $\CP(\be)$ is locally analytic and then globally analytic as long as $\be<\be_{c}$, again by connectedness.

To finish the proof, we thus just need to prove \eqref{equ2-implicibeta}. Actually this holds because for fixed $\be<\be_{c}$ and $Z=\CP(\be)>Z_{c}(\be)$, if we use notations from Theorem \ref{th-muzextended} with $\be.V$ and $Z>Z_{c}(\be)$, 
 Equality \eqref{eq-lczspec} yields 
 $$\log\l_{Z,\be}=\lim_{\ninf}\frac1n\log\CL_{Z}^{n}(\BBone)(x),$$
 for any $x$ in $\I$. We let the reader check that normal convergence and $\CP(\be)>Z_{c}(\be)$ yield
 $$\frac{\partial\l_{Z,\be}}{\partial Z}=\l_{Z,\be}\lim_{\ninf}\frac1n\frac{\partial\CL_{Z}^{n}(\BBone)(x)}{\partial Z}\frac1{\CL_{Z}^{n}(\BBone)(x)}.$$
 Furthermore, $\disp \frac{\partial\CL_{Z}^{n}(\BBone)(x)}{\partial Z}=-\CL_{Z}^{n}(\rm^{n})(x)$ and 
  the Lebesgue Dominated convergence theorem yields 
 $$\lim_{\ninf}\frac1n\CL_{Z}^{n}(\rm^{n})(x)\frac1{\CL_{Z}^{n}(\BBone)(x)}=\int \rm\,d\nu_{Z}.$$
 If  $\wh\mu_{\be}$ denotes the unique equilibrium state for $\be.V$, we get
$$0\neq\frac1{\wh\mu_{\CP(\be)}(\cup_{t\in[0,1]}f_{t}(\CM))}=\int \rm\,d\mu_{\CP(\be)}=e^{\pm K}\int \rm\,d\nu_{\CP(\be)},$$
and this shows that $\disp{\frac{\partial\l_{Z,\be}}{\partial Z}}_{|Z=\CP(\be)}$ is not zero.

\bibliographystyle{plain}
\bibliography{bibliodawei2}

R. Leplaideur\\
ISEA, Université de la Nouvelle Cal\'edonie.

\smallskip
LMBA, UMR6205\\
Universit\'e de Brest\\

\texttt{Renaud.Leplaideur@unc.nc}\\
\texttt{http://rleplaideur.perso.math.cnrs.fr/}

\end{document}